\newtheorem{theorem}{Theorem}[section]
\newtheorem{lemma}[theorem]{Lemma}
\newtheorem{corollary}[theorem]{Corollary}
\newtheorem{proposition}[theorem]{Proposition}
\newtheorem{definition}[theorem]{Definition}
\newtheorem{example}[theorem]{Example}
\newtheorem{notation}[theorem]{Notation}
\newtheorem{set-up}[theorem]{Geometric set-up}
\newtheorem{remark}[theorem]{Remark}
\newcommand{\relC}[2]{C^*(#2\subset {#1})} 
\newcommand{\relD}[2]{D^*(#2\subset{#1})} 
\newcommand{\reals}{\mathbb{R}}
\newcommand{\complexs}{\mathbb{C}}
\newcommand{\naturals}{\mathbb{N}}
\newcommand{\K}{\mathbb{K}}
\DeclareMathOperator{\id}{id}
\newcommand{\boundary}[1]{\partial#1}
\newcommand{\abs}[1]{\left\lvert#1\right\rvert} 
\newcommand{\norm}[1]{\left\lVert#1\right\rVert}
\newcommand{\tensor}{\otimes}
\newcommand{\into}{\hookrightarrow}
\newcommand{\iso}{\cong}
\newcommand{\disjointunion}{\amalg}
\DeclareMathOperator{\Ad}{Ad}
\DeclareMathOperator{\cyl}{cyl}
\DeclareMathOperator{\supp}{supp}   
\DeclareMathOperator{\spin}{spin}
\DeclareMathOperator{\Pos}{Pos}
\DeclareMathOperator{\ind}{ind}
\DeclareMathOperator{\coarseind}{Ind}
\DeclareMathOperator{\Ind}{Ind}
\DeclareMathOperator*{\dirlim}{dirlim}
\newcommand{\forget}[1]{}
\def  \nuint {\raise10pt\hbox{$\nu$}\kern-6pt\int}
\newcommand\E{\mathcal E}
\newcommand\Q{\mathcal Q}
\def \Sp {{\cal S}}
\def \Ch {{\rm Ch}}
\def\Id{{\rm Id}}
\newcommand\ha{\frac12}
\renewcommand\Im{\operatorname{Im}}
\newcommand\D{\mathcal D}
\newcommand\Di{D\kern-6pt/}
\newcommand\cDi{{\mathcal D}\kern-6pt/}
\newcommand\spi{S\kern-6pt/}
\newcommand \cspi{\Sp\kern-6pt/}
\newcommand\CC{\mathbb C}
\def \cal {\mathcal}
\def \K {{\cal K}}
\def \BB {\mathbb B}
\newcommand\NN{\mathbb N}
\newcommand\RR{\mathbb R}
\newcommand\KK{\mathbb K}
\newcommand\pa{\partial}
\definecolor{darkgreen}{cmyk}{1,0,1,.2}
\definecolor{m}{rgb}{1,0.1,1}
\global\let\c@equation=\c@theorem}
\begin{document}
\pagestyle{myheadings}
\markboth{Paolo Piazza and Thomas Schick}{Rho-index theorems and Stolz'
  psc sequence}


\title[Rho-classes,  index theory and Stolz' psc sequence]{Rho-classes,  index theory and Stolz' positive
  scalar curvature sequence}

\author{Paolo Piazza and Thomas Schick}

\extraline{The first author was partially funded by {\it Ministero  dell'Istruzione, dell'Universit\`a e della Ricerca} through
the project {\it Spazi di moduli e teoria di Lie} and as a guest for several week-long visits in
 G\"ottingen by  the Courant Research
 Center ``Higher order 
 structures in Mathematics'' within the German initiative of excellence. The
 second author was partially funded by the Courant Research Center ``Higher order structures in Mathematics''
within the German initiative of excellence, and partially funded by Sapienza Universit\`a di Roma
for a three-months
visiting professorship in Rome}

\maketitle
\begin{abstract}
  {In this paper, we study the space of metrics of positive scalar
    curvature using methods from coarse geometry.

  Given a closed spin manifold $M$ with fundamental group $\Gamma$, Stephan
  Stolz introduced the positive scalar curvature exact 
  sequence, in analogy to Wall's surgery exact sequence in topology. It
  calculates a structure
  group of metrics of positive scalar curvature on $M$ (the object we want to
  understand) in terms of spin-bordism of $B\Gamma$ (the classifying space of
  $\Gamma$) and a further group $R^{\spin} (\Gamma)$.

  Higson and Roe introduced a K-theory exact sequence $\to
  K_*(B\Gamma)\xrightarrow{\alpha} K_*(C^*_\Gamma)\xrightarrow{j} K_{*+1}(D^*_\Gamma)\to$ in coarse geometry which
  contains the Baum-Connes assembly map $\alpha$, with $K_*(D^*_\Gamma)$ 
  canonically associated to $\Gamma$. The K-theory groups in
  question are the home of interesting index invariants and secondary
  invariants, in particular the rho-class  
  $\rho_\Gamma (g)\in
  K_*(D^*_\Gamma)$ of a metric of positive scalar curvature.

  One of our main results is the construction of a map from
  the Stolz exact sequence to the Higson-Roe exact sequence (commuting with all
  arrows), using coarse index theory throughout. This theorem complements the results
  of Higson and Roe in \cite{higson-roeI,higson-roeII,higson-roeIII} where they
  show that it is indeed possible to map the surgery exact sequence in topology to their sequence 
  $\to
  K_*(B\Gamma)\xrightarrow{\alpha} K_*(C^*_\Gamma)\xrightarrow{j} K_{*+1}(D^*_\Gamma)\to$.
     
\smallskip
  Our main tool is an index theorem of Atiyah-Patodi-Singer type, which
  is of independent 
  interest. Here, assume that $Y$ is a compact spin manifold with boundary,
  with  
  a Riemannian metric $g$ which is of  positive scalar curvature when restricted to the boundary
  (and $\pi_1 (Y)=\Gamma$). Because the Dirac
  operator on the boundary is invertible, one constructs an APS-index
   $\coarseind_\Gamma (Y)\in K_* (C^*_\Gamma)$. This can be pushed forward to
  $j_* (\coarseind_\Gamma (Y))\in K_* (D^*_\Gamma)$ (corresponding to the
  ``delocalized   part'' of 
  the index). We then prove a delocalized APS-index
  theorem, equating this 
  class to the rho-class of the boundary
 $   j_* (\coarseind_\Gamma (Z)) = \rho_\Gamma(g_{\pa Z}) \in K_* (D^*_\Gamma)$.

 As a companion to this, 
we prove a secondary
 partitioned manifold index theorem. Given a (non-compact) spin manifold $W$ with positive scalar
 curvature metric $g$, with a free and discrete
  isometric action by a  group $\Gamma$ and a $\Gamma$-invariant cocompact 
 partitioning hypersurface $M$, one can use a ``partitioned
  manifold construction'' in order to obtain the partitioned manifold
  $\rho$-class  $\rho_\Gamma^{{\rm pm}}(g)\in K(D^*_\Gamma)$. Assume in addition that
  $M$ has a tubular neighborhood where the metric is a product
  $g=g_M+dt^2$. Then we prove the partitioned manifold $\rho$-class  theorem
 $   \rho_\Gamma(g_M) = \rho_\Gamma^{{\rm pm}}(g) \in K_* (D^*_\Gamma)$.
We use this secondary partitioned manifold index theorem to distinguish isotopy
classes of positive scalar curvature on $W$.
\smallskip

}  
\end{abstract}
\tableofcontents


\section{Introduction and main results}\label{sec:intro}


\subsection{Basics on coarse geometry and coarse index theory}
\label{sec:basics}

We start by recalling the basic constructions of coarse geometry, their
associated $C^*$-algebras and K-theory as used in the paper. We shall freely
use concepts and results from \cite{hr-novikov} and \cite{roe-cbms}.


\begin{definition}
  Let $X$ be a complete  Riemannian manifold of positive dimension, $C_c(X)$
  the compactly supported continuous functions with values in $\complexs$ and
  $C_0(X)$ its sup-norm closure, the continuous functions vanishing at
  infinity.

  Let $E\to X$ be a
  Hermitean vector bundle. We consider $H:=L^2(E)$ and $H':=L^2 (E)\tensor
  l^2(\naturals)$. These are so-called \emph{adequate} $X$-modules, which means that $H$ is a Hilbert space with a
  $C^*$-homomorphism $C_0(X)\to B(H)$, given here by pointwise multiplication, and if $0\ne f\in C_0(X)$ then it does
  not act as compact operator, and that $C_0(X)H$ is dense in $H$.
 We have a
canonical isometry $u\colon H\to H'$ mapping 
into the first direct summand of $l^2(\naturals)$. Using this, we map
an operator $A$ on $H$ to the operator $uAu^*$ on $H'$. We will implicitly do
this throughout the paper and this way consider the operators on $H$
as operators on $H'$, without explicitly mentioning it.

  \begin{itemize}
  \item $D_c^*(X,H)$ is defined to be the algebra of bounded operators $T$ on
    $L^2(E)\tensor l^2(\naturals)$
    with the following properties:
    \begin{enumerate}
    \item $T$ has finite propagation, which means that there is an $R>0$ such
      that for each $s\in L^2(E)$ and
      for each $x\in\supp(Ts)$, $d(x,\supp(s))<R$.
    \item $T$ is \emph{pseudo-local}: for each $\phi\in C_c(X)$, the commutator
      $[T,\phi]$ is compact.
    \end{enumerate}
 $D^*(X,H)$ is defined to be the norm closure of $D_c^*(X,H)$.
   \item $C^*_c(X,H)$ is defined to be the subalgebra of $D^*_c(X,H)$ of
     operators which are in addition
    \emph{locally compact}, i.e.~$T\phi$ and $\phi T$ are compact for
       each 
       $\phi\in C_c(X)$.
 $C^*(X,H)$ is the $C^*$-closure of $C_c^*(X,H)$. This is the \emph{Roe
       algebra} of $X$.
  \end{itemize}
The definition generalizes to an arbitrary proper metric space $X$;
$L^2(E)$ then has to be replaced by an abstract adequate $C_0(X)$-module.
\end{definition}

    There are natural functoriality properties
 that we recall from
    \cite{hr-novikov,hry-mv,roe-cbms}:
    \begin{definition} A map
     $f\colon X\to Y$ between proper metric spaces is a \emph{coarse map} if
     for each $R>0$ there is $S>0$ such that the image under $f$ of every
     $R$-ball is contained in an $S$-ball and, moreover, the inverse image of
     every bounded set is bounded.
    \end{definition}

We will not prove the following functoriality results of Higson and Roe, but
we recall, after the Proposition, the relevant construction which we are going to use.
    \begin{proposition}\label{prop:functoriality}
If $f\colon X\to Y$ is a continuous coarse map and $H_X,H_Y$ are adequate
$C_0(X)$ or $C_0(Y)$-modules, respectively, then $f$ induces a
non-canonical, but with suitable choices
functorial homomorphism $f_*\colon D^*(X,H_X)\to D^*(Y,H_Y)$ which maps
$C^*(X,H_X)$ to $C^*(Y,H_Y)$. The induced map in K-theory is \emph{canonical}.
    \end{proposition}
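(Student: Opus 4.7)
The plan is to define the induced homomorphism by conjugation with an isometry $V\colon H_X\to H_Y\tensor l^2(\naturals)$ that ``covers'' the map $f$ in the coarse-geometric sense, following the construction of Higson--Roe.

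For the first step I would construct a covering isometry $V$ with two properties: (i) \emph{controlled propagation}, i.e.\ there exists $R>0$ such that $\phi V\psi=0$ whenever $\dist(\supp\phi,f(\supp\psi))>R$; and (ii) the commutator $\phi V - V(\phi\circ f)$ is compact for every $\phi\in C_0(Y)$. Concretely, choose a locally finite Borel partition $\{U_i\}$ of $Y$ by sets of uniformly bounded diameter, pull back to a partition $\{f^{-1}(U_i)\}$ of $X$, and on each piece pick a partial isometry $V_i\colon \chi_{f^{-1}(U_i)}H_X \to \chi_{U_i}H_Y\tensor l^2(\naturals)$; the extra copies of $l^2(\naturals)$ (adequacy plus stabilisation) provide enough orthogonal room to glue the $V_i$ coherently into a single isometry $V=\sum_i V_i$. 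Continuity of $f$ controls the propagation needed for (i); (ii) then follows from the block structure, since $\phi$ and $\phi\circ f$ agree on corresponding blocks up to oscillation of order proportional to $\diam(U_i)$.

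For the second step set $f_*(T):=VTV^*$ and verify the claimed properties. Finite propagation of $VTV^*$ is bounded by $R_T + 2R_V$. For pseudolocality write
\[ [VTV^*,\phi] \;=\; V[T,V^*\phi V]V^* \;+\; (\text{error terms involving }\phi V-V(\phi\circ f)), \]
and use $V^*\phi V \equiv \phi\circ f$ modulo compacts (since $V^*V=1$ and $V$ covers $f$) together with pseudolocality of $T$ to deduce that the commutator is compact. Local compactness of $\phi\cdot VTV^*$ for $T\in C^*_c(X,H_X)$ follows by the same manipulation, so $f_*$ restricts to $C^*(X,H_X)\to C^*(Y,H_Y)$. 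Norm continuity then extends $f_*$ to the full closures.

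For canonicity in K-theory, given two covering isometries $V_0,V_1$, I would stabilise them into a common module and form the block isometry $\diag(V_0,V_1)$ in $M_2$ of the target algebra; a standard rotation homotopy through covering isometries connects this to $\diag(V_0,V_0)$ modulo compacts, showing that conjugation by $V_0$ and by $V_1$ produce $*$-homomorphisms that are homotopic, hence equal on K-theory. The step I expect to be the main obstacle is the simultaneous realisation of finite propagation and the compact-commutator covering condition for $V$: this requires a careful interplay between the continuity of $f$ (to control preimages of sets of small diameter) and the adequacy of $H_Y\tensor l^2(\naturals)$ (to accommodate infinitely many locally defined pieces without producing non-compact overlap artifacts), and is really what makes the construction non-canonical at the level of $C^*$-algebras while being canonical on K-theory.
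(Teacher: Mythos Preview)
The paper does not prove this proposition; it explicitly states ``We will not prove the following functoriality results of Higson and Roe, but we recall, after the Proposition, the relevant construction,'' and then records the definitions of covering isometries in the $C^*$- and $D^*$-sense, defines $f_*:=\Ad(W)$, and refers to \cite[Lemma 3]{hry-mv} and \cite[Lemma 7.7]{hr-novikov} for the existence of $W$ and the independence of the induced K-theory map. Your outline follows exactly this standard construction, so at the level of strategy there is nothing to compare.

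Two points in your sketch deserve correction. First, your block-partition isometry $V=\sum_i V_i$ gives the propagation condition (i) immediately, but your justification of the compact-commutator condition (ii) is not right: the fact that $\phi$ and $\phi\circ f$ differ by something of size $O(\diam U_i)$ on each block makes $\phi V-V(\phi\circ f)$ \emph{small in norm} on blocks where $\phi$ varies little, not \emph{compact}. One genuinely needs a more refined construction here (this is precisely why the paper's Definition \ref{def:cover-D*-sense} allows $W$ to be a norm limit of such $V$'s, and why \cite[Lemma 7.7]{hr-novikov} uses the $l^2(\naturals)$-stabilisation in an essential way). You correctly flag this as the main obstacle, but your proposed resolution does not close the gap.

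Second, your K-theory canonicity argument is misstated: a rotation in $M_2$ connects $\diag(V_0,V_1)$ to $\diag(V_1,V_0)$, not to $\diag(V_0,V_0)$. The argument that actually works (and is what \cite[Lemma 3]{hry-mv} does) is to embed $V_0$ and $V_1$ with orthogonal ranges as $V_0\oplus 0$ and $0\oplus V_1$ into $H_Y\oplus H_Y$; then $V_t:=\cos(t)(V_0\oplus 0)+\sin(t)(0\oplus V_1)$ is an isometry covering $f$ for every $t$, and $\Ad(V_t)$ furnishes the required homotopy of $*$-homomorphisms.
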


    \begin{definition}
Applying Proposition \ref{prop:functoriality} to $\id\colon X\to X$ we observe
that $C^*(X,H_X)$ and $D^*(X,H_X)$ depend only mildly on the adequate module
$H_X$, and that their K-theory is independent of this choice. We follow the
custom of \cite{roe-cbms} and drop $H_X$ from the notation, writing simply
$C^*(X)$ or $D^*(X)$ instead of $C^*(X,H_X)$, $D^*(X,H_X)$.
    \end{definition}

For the construction of $f_*$ of Proposition \ref{prop:functoriality}, we need
the following concepts:
    \begin{definition}
    Let $H_X$ and $H_Y$ two
    adequate modules and let $f\colon X\to Y$ be a coarse map.
    We say that an isometric embedding  $W\colon H_X\to H_Y$
    \emph{covers $f$ in the $C^*$-sense} if $W$ is the norm-limit 
    of linear maps $V$ satisfying the following condition:
    \begin{equation}\label{condition-c}
     \exists \;R>0 \text{ such that } \phi V \psi =0 \text{ if }d({\rm Supp} (\phi),f({\rm Supp} (\psi)))>R,
    \text{ for } \phi\in C_c (Y) \text{ and }\psi\in C_c (X).
    \end{equation} 
    \end{definition}    

    Given a coarse map it is always possible to find such a $W$.
    Then the map $\Ad (W) (T):= W T W^*$, from the bounded
    operators on $H_X$ to the bounded 
    operators of $H_Y$, sends $C^*(X,H_X)$ to $C^* (Y,H_Y)$ and we define
    $f_*:=\Ad(W)\colon 
    C^*(X,H_X)\to C^*(Y,H_Y)$. The induced map in K-theory
     is independent of the choice of $W$, see \cite[Lemma 3]{hry-mv}.
    Moreover, by \cite{hr-trans} the functor $K_* (C^* (X))$ 
    is a coarse homotopy invariant.
    
    Regarding $D^*(X,H)$ we have the following.
     
    \begin{definition}\label{def:cover-D*-sense}
 Let $f\colon X\to Y$ be a \emph{continuous} coarse
    map, let $H_X$, $H_Y$ be two adequate modules. We shall say that an
    isometry  $W\colon H_X\to H_Y$ \emph{covers $f$ in the
  $D^*$-sense}\footnote{in \cite[Definition 2.4]{Siegel}, the same property is denoted ``$W$
  covers $f$ topologically''} if $W$ is the norm-limit 
    of bounded maps $V$ satisfying the following two conditions:
\begin{itemize}    
\item there is an $R>0$ such that $\phi V \psi =0$ if $d({\rm Supp} (\phi),f({\rm Supp} (\psi)))>R$,
    for $\phi\in C_c (Y)$ and $\psi\in C_c (X)$;
    \item  $\phi V - V (\phi\circ f)$
    is compact for each $\phi\in C_0 (Y)$.
    \end{itemize}
    \end{definition}    
   For such a $W$ one proves that $\Ad (W)$ sends $D^*(X,H_X)$ into $D^* (Y,H_Y)$
    and induces therefore a morphism $f_*:=\Ad(W) \colon D^*(X,H_X) \to D^*
    (Y,H_Y)$. As for  
    $C^*$, one proves  
    that the induced map in K-theory does not depend on the choice of $W$. See
    again \cite[Lemma 3]{hry-mv}. 
    
     Up to tensoring
    with $\ell^2 (\NN)$, see \cite[Lemma 7.7]{hr-novikov}, it is always
    possible  
    to find an isometry $W$ satisfying the required
    two properties, which is the reason why we included this tensor product
    with $\ell^2(\naturals)$ in the definition of $D^*(X)$.
         
  By \cite[Lemma 7.8]{hr-novikov},  $K_* (D^* (X))$  is invariant under
  \emph{continuous} coarse homotopy.
  

\bigskip

To be able to use standard techniques from the K-theory of $C^*$-algebras,
given a subspace $Z\subset X$ we replace $C^* Z$ by an ideal $\relC{X}{Z}$ of
$C^*X$ as follows:

  \begin{definition}\label{def:rel_groups}
    Let $X$ be a proper metric space and $Z\subset X$ a closed subset. 
 Define
      $\relD{X}{Z}$ as the closure of those operators $T\in D_c^*(X)$ such
      that 
      there is an $R>0$ satisfying $\phi T=0=T\phi$ whenever $\phi\in C_c(X)$
      with $d(\supp(\phi),Z)>R$. 
Define
    $\relC{X}{Z}$\footnote{we deviate here from the notation employed by Roe, e.g.~$C^*_Z(X)$ for $\relC{X}{Z}$ in \cite[Definition
      3.10]{roe-cbms}. Our notations and definitions agree with those used in \cite{Siegel}.} as the closure of those $T\in C_c^*(X)$ such that
    \begin{enumerate}
    \item there is $R_T\ge 0$ satisfying $\phi T=0=T\phi$ whenever $\phi\in C_c(X)$ with
      $d(\supp(\phi),Z)>R_T$ and
    \item  $\forall\phi\in C_0(X\setminus Z)$ $\phi T$ and $T\phi$ are compact.
 \end{enumerate}
 Then
    $\relD{X}{Z}$ and $\relC{X}{Z}$  are ideals in $D^*(X)$.
  \end{definition}


We now describe equivariant versions of the constructions made so far.
  Assume therefore in addition that a discrete group $\Gamma$ acts freely and
  isometrically on the manifold $X$ and the Hermitean bundle $E$. It then also
  acts by unitaries on $H=L^2(E)$ and  we define 
  \begin{enumerate}
  \item $D^*(X)^\Gamma$ to be the norm closure of the $\Gamma$-invariant
    part $D^*_c(X)^\Gamma$, and its ideal $C^*(X)^\Gamma$ as the norm closure
    of $C^*_c(X)^\Gamma$. If $Z$ is a $\Gamma$-invariant subspace, we define
    in the corresponding way the ideals $\relD{X}{Z}^\Gamma$ and
    $\relC{X}{Z}^\Gamma$.
  \item The construction generalizes to an arbitrary proper metric space $X$
    with proper isometric $\Gamma$-action, using a
    $\Gamma$-adequate\footnote{adequate 
      requires a 
      little bit of extra care, compare \cite[Definition 5.13]{roe-cbms}:
      replacing $H$ by 
      $H\tensor l^2(\Gamma)\tensor l^2(\naturals)$ will do} $C_c(X)$-module
    $H$ with compatible unitary $\Gamma$-action.
  \item As indicated in the notation, one has suitable independence on $E$,
    along the way with the obvious generalization of functoriality to
    $\Gamma$-equivariant maps.
  \item 
  If 
   the quotient $V=X/\Gamma$ is a finite complex, then 
  $K_* (D^* (X)^\Gamma/C^* (X)^\Gamma)\simeq K_{*-1} (V)$;
  see
  \cite[Lemmas 5.14, 5.15]{roe-cbms}.
  \end{enumerate}

  \begin{lemma}\label{lemma:from-hry} (compare \cite[Lemma 1]{hry-mv},
    \cite[Proposition 3.8]{Siegel})\label{lem:subspace_Cstar} 
   Given a closed $\Gamma$-subspace $Z$ of a proper metric $\Gamma$-space $X$,
   the inclusion $Z\into 
   X$    induces K-theory isomorphisms
   $K_*(C^*(Z)^\Gamma)\xrightarrow{\iso} K_*(\relC{X}{Z}^\Gamma)$, $K_*(D^*
   (Z)^\Gamma)\xrightarrow{\iso} 
   K_*(\relD{X}{Z}^\Gamma)$. 
  \end{lemma}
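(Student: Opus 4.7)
The plan is to realize the two K-theory maps by explicit $*$-homomorphisms of $C^*$-algebras, namely ``extension of operators by zero'', and to construct their K-theoretic inverses by filtering $\relC{X}{Z}^\Gamma$ and $\relD{X}{Z}^\Gamma$ by equivariant Roe-type algebras of bounded neighborhoods of $Z$, then invoking coarse invariance.

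First I would pick a $\Gamma$-adequate $C_0(X)$-module $H_X$ together with an isometric $\Gamma$-equivariant embedding $W\colon H_Z\hookrightarrow H_X$ of a $\Gamma$-adequate $C_0(Z)$-module $H_Z$ covering the inclusion $Z\hookrightarrow X$ in both the $C^*$- and $D^*$-senses; the existence of such $W$ follows by the standard tensoring-with-$\ell^2(\NN)$ argument of \cite[Lemma 7.7]{hr-novikov}. Conjugation by $W$ then yields $*$-homomorphisms $\Ad(W)\colon C^*(Z,H_Z)^\Gamma \to \relC{X}{Z,H_X}^\Gamma$ and $D^*(Z,H_Z)^\Gamma \to \relD{X}{Z,H_X}^\Gamma$: finite propagation, local compactness and pseudo-locality on $H_Z$ translate directly into the defining conditions of the relative algebras in view of the covering property of $W$, and independence of the choice of $W$ at the K-theory level is handled exactly as in \cite[Lemma 3]{hry-mv}.

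For the inverse, I would set up a filtration. Let $\mathcal A_R\subset \relC{X}{Z}^\Gamma$ (respectively $\mathcal B_R\subset \relD{X}{Z}^\Gamma$) be the $C^*$-subalgebra consisting of operators whose ``support radius from $Z$'', i.e.\ the $R_T$ in Definition \ref{def:rel_groups}, can be taken equal to $R$. By the definitions one has $\relC{X}{Z}^\Gamma=\overline{\bigcup_R \mathcal A_R}$ and $\relD{X}{Z}^\Gamma=\overline{\bigcup_R \mathcal B_R}$, so by continuity of K-theory $K_*(\relC{X}{Z}^\Gamma)=\varinjlim_R K_*(\mathcal A_R)$, and similarly for $D^*$. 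Cutting by the characteristic function of a slightly larger $\Gamma$-invariant neighborhood of $Z$, one canonically identifies $\mathcal A_R$ with the equivariant Roe algebra $C^*(N_R)^\Gamma$ of the $R$-neighborhood $N_R:=N_R(Z)$, with an analogous identification of $\mathcal B_R$ with $D^*(N_R)^\Gamma$. Since $Z\hookrightarrow N_R$ is a $\Gamma$-equivariant coarse equivalence (with coarse inverse a nearest-point projection), the coarse invariance of $K_*(C^*(-)^\Gamma)$ recalled after Proposition \ref{prop:functoriality} gives $K_*(C^*(Z)^\Gamma)\iso K_*(C^*(N_R)^\Gamma)$. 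Passing to the (essentially constant) direct limit over $R$ produces the inverse of the map constructed in the previous paragraph; the $D^*$-statement is proved by the same argument, using continuous coarse homotopy invariance.

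The main obstacle is compatibility bookkeeping: one must verify that the transition maps in the direct system $\{K_*(\mathcal A_R)\}$ coincide with those induced by the natural inclusions $N_R\hookrightarrow N_{R'}$ after the coarse-equivalence identifications, that all intermediate choices (covering isometries, cutoff functions, nearest-point projections) can be made compatibly with the $\Gamma$-action -- possibly after averaging over a fundamental domain -- and, most delicately for the $D^*$-part, that the coarse equivalence $Z\hookrightarrow N_R$ together with its coarse inverse can be realized by maps that are \emph{continuous} in an appropriate sense, so that the notion of covering in the $D^*$-sense of Definition \ref{def:cover-D*-sense} applies and the functoriality of $K_*(D^*(-)^\Gamma)$ may be invoked. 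These technicalities are precisely the content of \cite[Lemma 1]{hry-mv} and \cite[Proposition 3.8]{Siegel}.
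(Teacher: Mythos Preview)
The paper does not give its own proof of this lemma; it simply records the statement and defers to \cite[Lemma 1]{hry-mv} and \cite[Proposition 3.8]{Siegel}. Your sketch is precisely the argument of those references: extension by zero via a covering isometry for the forward map, filtration by the subalgebras of operators supported in the $R$-neighborhoods $U_R(Z)$, identification of these with the Roe-type algebras of $U_R(Z)$, and then coarse (respectively continuous coarse) invariance applied to the coarse equivalence $Z\hookrightarrow U_R(Z)$ to see that the direct system is essentially constant. You have also correctly isolated the genuine technical point, namely that for $D^*$ one needs the coarse retraction $U_R(Z)\to Z$ to be covered in the $D^*$-sense of Definition~\ref{def:cover-D*-sense}, which is exactly what \cite[Proposition 3.8]{Siegel} supplies; so your proposal is in line with the cited proofs.
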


Directly from the above results and  the short exact sequence
$$0\to C^*(X)^\Gamma \to D^*(X)^\Gamma\to D^* (X)^\Gamma/C^* (X)^\Gamma\to 0$$
we obtain the {\it Higson-Roe surgery sequence} for a $\Gamma$ manifold $X$ with quotient
$X/\Gamma$ compacts:
\begin{equation}\label{hr-surgery-sequence}
\cdots\to K_{n+1} (X/\Gamma) \to K_{n+1} ( C^* (X)^\Gamma) \to
     K_{n+1}((D^*(X)^\Gamma))  \to  K_{n} ( X/\Gamma) \to\cdots
\end{equation}

We will also be interested in a universal version of this sequence. First we
give a definition:

  \begin{definition}\label{def:universal_Gamma_objects}
    Let $\Gamma$ be a discrete group. Define
    \begin{equation*}
      K_*(C^*_\Gamma):=  \dirlim_{X\subset E\Gamma\text{ $\Gamma$-compact}}
      K_*(C^*(X)^\Gamma);\quad 
      K_*(D^*_\Gamma):= \dirlim_{X\subset E\Gamma\text{ $\Gamma$-compact}} K_*(D^*(X)^\Gamma).
    \end{equation*}
    Here, $E\Gamma$ is any contractible CW-complex with free cellular
    $\Gamma$-action, a  universal space for free $\Gamma$ actions.

  By coarse invariance of $C^*$ and \cite[Lemma 5.14]{roe-cbms}, there is a
  canonical isomorphism
  $K_*(C^*(X)^\Gamma)\iso C^*_{r}\Gamma$ for any free cocompact $\Gamma$-space
  $X$. Therefore the definition of $K_*(C^*_\Gamma)$ is along canonical
  isomorphisms and we get canonically $K_*(C^*_\Gamma)=K_*(C^*_r\Gamma)$. 
     Once this definition is given, we obtain immediately the (universal) Higson-Roe surgery sequence:
\begin{equation}\label{hr-surgery-universal}
\cdots\to K_{n+1} (B\Gamma) \to K_{n+1} ( C^*_\Gamma) \to
     K_{n+1}(D^*_\Gamma)  \to  K_{n} ( B\Gamma) \to\cdots
\end{equation}
which can be rewritten   as 
\begin{equation}\label{hr-surgery-universal-bis}
\cdots\to K_{n+1} (B\Gamma) \to K_{n+1} ( C^*_r \Gamma) \to
     K_{n+1}(D^*_\Gamma)  \to  K_{n} ( B\Gamma) \to\cdots
\end{equation}

It is proved  by Roe in \cite{roe-bc_coarse} that the homomorphism 
$K_{n+1} (B\Gamma) \to K_{n+1} ( C^*_r \Gamma)$ appearing in
\eqref{hr-surgery-universal-bis}
is precisely equal to the assembly map. This implies:
\begin{equation}\label{bc-remark-d}
\text{if $\Gamma$ is torsion free then the Baum-Connes conjecture for $\Gamma$
  is equivalent to }\;
K_{n+1}(D^*_\Gamma)=0 .
\end{equation}

    If $M$ is a proper complete metric space with a free cocompact isometric
    $\Gamma$-action, 
    there is a universal $\Gamma$-map $u\colon M\to E\Gamma$ with range in a
    $\Gamma$-finite subcomplex ($u$ is automatically coarse), and any two such
    maps are (coarsely continuously)
    $\Gamma$-homotopic. We therefore get canonical induced maps
    \begin{equation*}
      u_*\colon K_*(C^*(M)^\Gamma)\to K_*(C^*_\Gamma);\qquad u_*\colon
      K_*(D^*(M)^\Gamma)\to K_*(D^*_\Gamma).
    \end{equation*}
     Moreover, for $C^*(M)^\Gamma$ the map is a canonical isomorphism.

    More generally, if $W$ is a complete metric space with free
    $\Gamma$-action, $M\subset W$ is $\Gamma$-invariant and $M/\Gamma$ is
    compact then $\relD{W}{M}^\Gamma$ is the limit of $D^*(U_R(M))^\Gamma$ as
    $R\to\infty$, 
    where $U_R(M)$ is the closed $R$-neighborhood of $M$, again a
    $\Gamma$-compact metric space. We get a compatible system of universal
    maps to $E\Gamma$, all with image in finite subcomplexes, and an induced
    compatible system of maps in K-theory, giving rise to the maps
    \begin{equation*}
      K_*(D^*(M)^\Gamma) \xrightarrow[\iso]{i_*}  \lim_{R\to \infty} K_*(\relD{U_R(M)}{M}^\Gamma) \iso
      K_*(\relD{W}{M}^\Gamma)  \xrightarrow{u_*} K_*(D^*_\Gamma),      
    \end{equation*}
    whose composition is the universal map for $D^*(M)^\Gamma$.
  \end{definition}

\subsection{Index and $\rho$-classes}\label{subsect:ind-and-rho}

We now recall Roe's method of  applying $C^*$-techniques to the Dirac
operator to efficiently define primary and secondary  invariants for spin
manifolds in the context of coarse geometry.

Let $X$ be an arbitrary complete spin manifold with free isometric action by
$\Gamma$ of dimension $n>0$. Fix 
  an odd continuous chopping function
  $\chi\colon \reals\to\reals$, i.e.~$\chi(x)\xrightarrow{x\to +\infty} 1$. With
  the Dirac operator $D_X$ we
  now consider  $\chi(D_{X})$. Roe proves, using finite propagation speed of
  the wave operator and ellipticity, that this is an element in
  $D^*(X)^\Gamma$, compare \cite[Proposition 2.3]{roe-partitioning}.

  \begin{proposition}\label{prop:vanish}
    Assume that $Y\subset X$ is a $\Gamma$-invariant closed subset and the
    scalar curvature is uniformly positive outside 
    $Y$. Then 
   $\chi(D_{X})$ is an involution modulo 
   $\relC{X}{Y}^\Gamma$.

\noindent
   In particular, if we have uniformly positive scalar curvature, then
   $\chi(D_X)$ is an involution in $D^*(X)^\Gamma$.

\noindent
   For the other extreme, without any further curvature assumption,
   $\chi(D_X)$ is an involution modulo $C^*(X)^\Gamma$.
\end{proposition}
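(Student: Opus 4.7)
The plan is to study $g(D_X) := \chi(D_X)^2 - I$ and to show that, in each of the three scenarios, it lies in the claimed ideal of $D^*(X)^\Gamma$; here $g(t) := \chi(t)^2 - 1$ belongs to $C_0(\reals)$ because $\chi(t) \to \pm 1$ at infinity. A preliminary observation I would use throughout is that changing the chopping function by an element of $C_0(\reals)$ changes $\chi(D_X)$ by an element of $C^*(X)^\Gamma$ (same argument as below), so $\chi$ may be reshaped freely whenever a spectral gap opens up.

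I would dispatch the third statement first. Approximate $g$ in sup norm by Schwartz functions $g_n$ with $\supp\widehat g_n \subset [-R_n, R_n]$. Writing
\[
 g_n(D_X) = \frac{1}{\sqrt{2\pi}} \int \widehat g_n(t)\, e^{itD_X}\, dt,
\]
unit propagation speed of the Dirac wave operator on the complete manifold $X$ gives propagation $\leq R_n$; $\Gamma$-equivariance is automatic, and local compactness follows from elliptic regularity and Rellich. Hence $g_n(D_X) \in C^*_c(X)^\Gamma$, and norm limits give $g(D_X) \in C^*(X)^\Gamma$. For the middle statement, the Lichnerowicz formula $D_X^2 = \nabla^*\nabla + \scal/4$ combined with $\scal \geq 4c > 0$ yields $\spec(D_X) \subset \reals \setminus (-\sqrt c, \sqrt c)$; reshaping $\chi$ so that $g$ is compactly supported inside $(-\sqrt c, \sqrt c)$, the spectral theorem gives $g(D_X) = 0$ directly.

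The first statement is the main content. After reshaping, I assume $g$ is compactly supported in $(-\sqrt c, \sqrt c)$ for some $c$ below $\inf_{X \setminus Y} \scal/4$. Given $\epsilon > 0$, choose $g_\epsilon$ with $\|g - g_\epsilon\|_\infty < \epsilon$ and $\supp \widehat g_\epsilon \subset [-R_\epsilon, R_\epsilon]$, take a $\Gamma$-invariant cutoff $\rho$ equal to $1$ on the $(R_\epsilon + 1)$-neighborhood of $Y$ and vanishing outside the $(2R_\epsilon + 2)$-neighborhood, and set $T_\epsilon := \rho\, g_\epsilon(D_X)\, \rho$. Then $T_\epsilon$ satisfies condition~(1) of Definition \ref{def:rel_groups} with $R_{T_\epsilon} = 2R_\epsilon + 2$ by construction, and condition~(2) is inherited from $g_\epsilon(D_X) \in C^*(X)^\Gamma$, so $T_\epsilon \in \relC{X}{Y}^\Gamma$. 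To estimate the remainder $(1-\rho)g_\epsilon(D_X) + \rho g_\epsilon(D_X)(1-\rho)$, I would compare $D_X$ with a Dirac operator $D'$ on an auxiliary complete spin $\Gamma$-manifold $X'$ with $\scal \geq 4c$ everywhere, arranged (by a Gromov-Lawson-style surgery near $Y$) so that the $R_\epsilon$-neighborhood of $\supp(1-\rho)$ embeds spin-isometrically into $X'$. Finite propagation then gives $g_\epsilon(D_X)(1-\rho) = g_\epsilon(D')(1-\rho)$, and since $g(D') = 0$ by the middle statement, $\|g_\epsilon(D')(1-\rho)\| \leq \|g - g_\epsilon\|_\infty < \epsilon$. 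The symmetric term is handled the same way, yielding $\|g(D_X) - T_\epsilon\| \leq 3\epsilon$.

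The main obstacle is the auxiliary geometric construction of $X'$ with a uniformly positive scalar curvature extension, compatible with the $\Gamma$-action: when $Y$ is only a closed subset and not a submanifold, a direct Gromov-Lawson surgery is not available. A cleaner intrinsic alternative I would attempt is to bypass $X'$ altogether and estimate $g_\epsilon(D_X)(1-\rho)$ via the Helffer-Sj\"ostrand almost-analytic functional calculus, using the Lichnerowicz quadratic form bound $\|D_X v\|^2 \geq c\|v\|^2$ for $v$ supported in the PSC region to control the resolvent $(z - D_X)^{-1}$ applied after $(1-\rho)$, uniformly for $z$ ranging in a neighborhood of $\supp g_\epsilon \subset (-\sqrt c, \sqrt c)$; the commutator $[\rho, D_X]$ is Clifford multiplication by $d\rho$ and is absorbed into the constants. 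Either route reduces the proposition to the elementary spectral gap statement underlying the middle case.
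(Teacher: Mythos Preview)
The paper does not give its own proof of this proposition; it only records that a complete argument can be found in Roe's later paper \cite{roe12:_posit} (Lemma~2.3) and in Pape's thesis. So there is nothing to compare against in the paper itself, and I will simply assess your argument on its merits.

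Your treatment of the second and third statements is correct and standard. The gap is in the main (first) statement, and you have honestly flagged it yourself: neither of your two routes is complete.

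For route (a), the auxiliary PSC manifold $X'$ genuinely need not exist. There is no mechanism that takes an arbitrary open region with positive scalar curvature and produces a complete spin manifold with uniformly positive scalar curvature containing an isometric copy of it; Gromov--Lawson surgery does nothing for you here because $Y$ is just a closed set. For route (b), the Helffer--Sj\"ostrand idea does not close: the Lichnerowicz bound $\|D_Xv\|^2\ge c\|v\|^2$ is only available for $v$ supported in the PSC region, but $(z-D_X)^{-1}(1-\rho)u$ has no such support constraint, and the single commutator $[\rho,D_X]=c(d\rho)$ you mention does not localise the resolvent.

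What actually works (and is essentially Roe's argument) is an \emph{iterated} use of Lichnerowicz together with finite propagation, which stays entirely on $X$. Choose $g=\chi^2-1$ supported in $(-a,a)$ with $a<\sqrt c$, and approximate it by $g_R$ with $\widehat{g_R}$ supported in $[-R,R]$ so that $\|x^k(g-g_R)\|_\infty$ is small for $k=0,\dots,N$. If $\supp v$ lies at distance $>R$ from $Y$, then for every $k$ the section $(x^kg_R)(D_X)v$ has propagation $\le R$ and hence is still supported in the PSC region. Lichnerowicz then gives
\[
\|g_R(D_X)v\|\;\le\;\tfrac{1}{\sqrt c}\,\|D_X g_R(D_X)v\|\;\le\;\cdots\;\le\;c^{-N/2}\,\|(x^Ng_R)(D_X)v\|\;\le\;c^{-N/2}\|x^Ng_R\|_\infty\|v\|,
\]
and since $\|x^Ng\|_\infty\le a^N\|g\|_\infty$ with $a/\sqrt c<1$, the right-hand side is as small as one likes for $N$ large. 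This yields $\|g(D_X)v\|<\epsilon\|v\|$ for all $v$ supported outside a suitable $R$-neighbourhood of $Y$, which is exactly what is needed to place $g(D_X)$ in $\relC{X}{Y}^\Gamma$. Your decomposition $g(D_X)=T_\epsilon+\text{remainder}$ with $T_\epsilon=\rho g_\epsilon(D_X)\rho$ is the right framework; what was missing is this bootstrapping estimate for the remainder, in place of the non-existent comparison manifold.
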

This important proposition is at the heart of the method. It is stated by Roe
\cite[Proposition 3.11]{roe-cbms} but without a full proof. 
A complete proof is given independently in Pape's thesis \cite[Theorem 1.4.28]{PapeThesis},
or by Roe in \cite[Lemma 2.3]{roe12:_posit}.

Recall that,
  given an involution $x$ in a $C^*$-algebra $A$, it defines in a canonical
  way the element $[\frac{1}{2}(x+1)]\in K_0 (A)$. If $n:=\dim(X)$ is odd, in
  the situation of Proposition \ref{prop:vanish} we 
  obtain the corresponding class $[D_X]:=[\frac{1}{2}(\chi(D_X)+1)] \in
  K_0(D^*(X)^\Gamma/\relC{X}{Y})$.

  If $n$ is even, we have to use the additional $\Gamma$-invariant grading of
  the spinor 
  bundle $L^2(S)=L^2(S_+)\oplus L^2(S_-)$. The operator $D_X$ and, because
  $\chi$ is an odd 
  function, $\chi(D_X)$ are odd with respect to this decomposition so that we
  obtain the positive part $\chi(D_X)_+\colon L^2(S_+)\to L^2(S_-)$. We choose
  a measurable bundle isometry $u\colon S_-\to S_+$ and obtain the induced
  isometry $U\colon L^2(S_-)\to L^2(S_+)$ covering $\id_X$ in the 
  $D^*$-sense\footnote{In \cite{roe-cbms}, it is only required that $U$ covers
    $\id_X$. However, as pointed out by Ulrich Bunke, to make sure that
    $U\chi(D_X)_+\in D^*(X)^\Gamma$ one needs the stronger assumption.

  In a previous version of the paper (the version published in Journal of
  Topology) we were only requiring that $U$ covers $\id_X$ in the
  $D^*$-sense. But only if we are as specific as in the new version, the
  K-theory class we introduce is indeed well-defined. This follows because now
  the possible 
  choices form a contractible set, and by homotopy invariance of K-theory
  everything is well defined. For more details on this
issue compare \cite[Section 2.2]{PiazzaSchick_surgery}}. 

  Then $U\chi(D_X)_+$ is a unitary in $D^*(X)^\Gamma/\relC{X}{Y}^\Gamma$ and
represents $[D_X] \in K_1(D^*(X)^\Gamma/\relC{X}{Y}^\Gamma)$.

  \begin{definition}\label{def:coarse_ind}
    Let $(X,g)$ be a complete Riemannian spin manifold of dimension $n>0$ with
    isometric free action of $\Gamma$. Define
    \begin{equation*}
      \Ind^{{\rm coarse}} (D_X) := \partial([D_X]) \in K_n(C^*(X)^\Gamma).
    \end{equation*}
  Here $\partial$ is the boundary map of the long exact sequence of the
  extension $0\to C^*(X)^\Gamma\to D^*(X)^\Gamma\to
  D^*(X)^\Gamma/C^*(X)^\Gamma\to 0$.

     Observe that, if we have uniformly positive scalar curvature outside of
    $Y$, we have a canonical lift to
    \begin{equation*}
      \Ind^{{\rm rel}}(D_X) := \partial([D_X]) \in 
      K_n(\relC{X}{Y}^\Gamma).
    \end{equation*}
 If we have uniformly positive scalar curvature throughout, we
   define a secondary invariant, the $\rho$-class of the metric $g$, as 
   \begin{equation}\label{def-rho}
     \rho(g):= [D_X] \in K_{n+1}(D^*(X)^\Gamma).
   \end{equation}
    Finally, if $X/\Gamma$ is compact, there is the canonical map to
   $K_{n+1}(D^*_\Gamma)$ of Definition \ref{def:universal_Gamma_objects} and
   we define  $\rho_\Gamma(g)\in
   K_{n+1}(D^*_\Gamma)$, the $\rho_\Gamma$-class of $g$,
   as the image of $\rho(g)$ under this map:
   \begin{equation}\label{def-rho-gamma}
   \rho_{\Gamma} (g):= u_* (\rho (g)) \;\;\in\;\;K_{n+1}(D^*_\Gamma).
   \end{equation}
   \end{definition}
   
   \begin{remark}\label{remark:torsion}
   It is important to point out that in contrast to the $\rho$-class 
   $\rho (g)\in
   K_{n+1}(D^*(X)^\Gamma)$, the $\rho_\Gamma$-class  
   $\rho_{\Gamma} (g)\in K_{n+1}(D^*_\Gamma)$ vanishes for groups
   without torsion, at least for those for which the Baum-Connes conjecture holds.
   See the fundamental remark appearing in \eqref{bc-remark-d}. This means we
   expect  $\rho_{\Gamma} (g)$
   to be different from zero only for groups
   $\Gamma$ {\it with} torsion.\\
  Basic non-trivial examples of $\rho_\Gamma(g)$ for $\Gamma$ with torsion are
  considered in \cite{higson-roe4}. 
      \end{remark}

Notice that the $\rho$-class is well defined whenever the Dirac operator $D_X$ is $L^2$-invertible;
we denote it $\rho(D_X)$ in this more general case. In fact, we will sometime employ this notation also for the spin Dirac operator
associated to a positive scalar curvature metric.

\subsection{ Delocalized APS-index theorem} \label{sec:index_class_boundary}

\begin{set-up}\label{geometric-set-up}
Let now $(W,g_W)$ be a $n$-dimensional Riemannian
spin manifold with boundary, complete as metric
space\footnote{i.e.~every Cauchy sequence converges}. We
denote its  
boundary $(M,g_M)$, and we assume always that we have product structures near
the boundary. We assume 
that the scalar curvature of $g_M$ is uniformly positive, and that $\Gamma$
acts freely, isometrically and cocompactly on $W$ and therefore also on $M$. We denote
the quotient  of $(W,g_W)$ by the action of $\Gamma$  as $(Y,g_Y)$, a compact Riemannian manifold with boundary.
Associated to these data is $W_\infty=W\cup_M M\times [0,\infty)$ with extended
product structure on the 
cylinder. This defines a complete Riemannian metric $g$ on  $W_\infty$
and we then have uniformly positive scalar curvature outside $W\subset
W_\infty$.
\end{set-up}

The considerations of the previous subsection apply now to the pair $(W\subset W_\infty)$ and we obtain
therefore a class $\Ind^{{\rm rel}}(D_{W_\infty}) \in K_n(\relC{W_\infty}{W}^\Gamma)$
and thus a class
\begin{equation}\label{coarse-class}
  \coarseind(D_W):=c_*^{-1}\Ind^{{\rm rel}}(D_{W_\infty}) \in K_n(C^*(W)^\Gamma).
\end{equation}

Here we use the canonical inclusion $c\colon C^*(W)^\Gamma\to
\relC{W_\infty}{W}^\Gamma$ which induces an isomorphism in K-theory by
Lemma~\ref{lemma:from-hry} below. 

Let us remark here that, under the canonical isomorphism
$K_n(C^*(W)^\Gamma)\iso K_n(C^*_{r}\Gamma)$,  this index class corresponds
to any of the other APS-indices for manifolds with boundary defined in this
context, e.g.~using the Mishchenko-Fomenko approach and the b-calculus or
using APS-boundary conditions, compare Section \ref{sec:index_technicalities}.

The passage from $C^*X$ to $D^*X$ corresponds to the passage to the
delocalized part of the index information (we will explain this later). This delocalized part we can
compute by a K-theoretic version of the APS-index theorem:

\begin{theorem}[Delocalized APS-index theorem] \label{theo:k-theory-deloc}  
Let $(W,g_W)$ be an even dimensional Riemannian spin-manifold with boundary
$\boundary W$
such that $g_{\boundary W}$ 
has positive scalar curvature. Assume that $\Gamma$ acts freely isometrically
and $W/\Gamma$ is compact. Then
\begin{equation}\label{k-theory-deloc}
\iota_* (\coarseind (D_W))= j_*(\rho(g_{\boundary W})) \quad\text{in}\quad K_{0}
(D^*(W))^\Gamma).
\end{equation}
Here, we use $j\colon D^*(\boundary W)^\Gamma\to D^*(W)^\Gamma$ induced by the
inclusion 
$\boundary W\to W$ and $\iota\colon C^*(W)^\Gamma\to D^*(W)^\Gamma$ the inclusion.
\end{theorem}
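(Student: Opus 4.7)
The plan is to realize both sides of \eqref{k-theory-deloc} as explicit classes in $K_0(\relD{W_\infty}{W}^\Gamma)$, which is canonically isomorphic to $K_0(D^*(W)^\Gamma)$ by Lemma~\ref{lemma:from-hry}. Under this identification, $\iota_*\coarseind(D_W)$ is the image of $\Ind^{{\rm rel}}(D_{W_\infty})\in K_0(\relC{W_\infty}{W}^\Gamma)$ under the inclusion of ideals $\relC{W_\infty}{W}^\Gamma\hookrightarrow\relD{W_\infty}{W}^\Gamma$, while $j_*\rho(g_{\partial W})$ factors as
\begin{equation*}
  K_0(D^*(\partial W)^\Gamma)\iso K_0(\relD{W_\infty}{\partial W}^\Gamma)\to K_0(\relD{W_\infty}{W}^\Gamma),
\end{equation*}
with the last map induced by $\partial W\subset W$. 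Hence the proof reduces to matching two classes in the common group $K_0(\relD{W_\infty}{W}^\Gamma)$, and the approach is to produce a representative of $\Ind^{{\rm rel}}(D_{W_\infty})$ whose part away from $W$ is manifestly built from $D_{\partial W}$.

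\textbf{Cylinder model and computation.}
On the cylinder $\partial W\times\reals$, the Dirac operator is translation-invariant in $t$ and, by positive scalar curvature on $\partial W$, invertible. Hence $\chi(D_{\partial W\times\reals})$ is a translation-invariant involution in $D^*(\partial W\times\reals)^\Gamma$ built purely by functional calculus from $\partial_t$ and $D_{\partial W}$. I would choose a chopping function $\chi$ with compactly supported Fourier transform, so that $\chi(D_{W_\infty})$ has finite propagation, and then, using a smooth cutoff that vanishes on $W$ and equals $1$ on $\partial W\times[1,\infty)$, patch the cylinder model to $\chi(D_{W_\infty})$ to obtain an operator $T\in D^*(W_\infty)^\Gamma$ which is an involution modulo $\relC{W_\infty}{W}^\Gamma$ and satisfies $T-\chi(D_{W_\infty})\in\relC{W_\infty}{W}^\Gamma$ by finite propagation and uniform positive scalar curvature beyond $W$. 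Thus $T$ represents the same $K_1$-class as $\chi(D_{W_\infty})$ in $D^*(W_\infty)^\Gamma/\relC{W_\infty}{W}^\Gamma$, and applying the exponential boundary map of the associated six-term sequence to $T$ yields a representative of $\Ind^{{\rm rel}}(D_{W_\infty})$ whose cylindrical part is dictated entirely by functional calculus of $D_{\partial W}$. After pushing this representative into $\relD{W_\infty}{W}^\Gamma$, an explicit K-theoretic homotopy, supported on the cylinder and contracting the boundary-map data onto $\partial W\times\{0\}$, identifies it with the natural extension of the projection $P_{\partial W}=\tfrac12(\chi(D_{\partial W})+1)$ from a tubular neighborhood of $\partial W$. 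Via Lemma~\ref{lemma:from-hry} this extension is exactly $j_*\rho(g_{\partial W})$, and \eqref{k-theory-deloc} follows.

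\textbf{Main obstacle.}
The decisive analytic step is the identification $T-\chi(D_{W_\infty})\in\relC{W_\infty}{W}^\Gamma$: one must combine finite propagation of $\chi(D)$, the uniform spectral gap of $D_{\partial W}$ coming from positive scalar curvature on $\partial W$, and uniform positive scalar curvature along the end of $W_\infty$ to certify that the discrepancy is genuinely locally compact and supported near $W$, not merely pseudo-local or coarsely close. A related delicate point is the K-theoretic homotopy in the previous step, which must bridge the graded $K_1$-representative $U\chi(D_{W_\infty})_+$ of Definition~\ref{def:coarse_ind}, with its auxiliary isometry $U$ of the spin summands, and the ungraded $K_0$-representative $P_{\partial W}$ on the odd-dimensional boundary; this matching is mediated by Clifford multiplication by $\partial_t$ in the product formula for $D_{W_\infty}$ near $\partial W$, which is the analytic core of the delocalized APS phenomenon.
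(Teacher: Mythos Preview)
Your overall architecture matches the paper's: pass to $K_0(\relD{W_\infty}{W}^\Gamma)$, replace $\chi(D_{W_\infty})$ by an operator built from the translation-invariant cylinder model, and identify the boundary map applied to the latter with the image of $\rho(g_{\partial W})$. Indeed the paper's Lemma~\ref{lem:subtract} is precisely your patching step (with $\psi\chi(D_{\cyl})\psi$ in place of your $T$), and it is established with a short finite-propagation argument; so your ``main obstacle'' is actually the easy part. (Note incidentally that the paper only proves $\chi(D)-\psi\chi(D_{\cyl})\psi\in\relD{W_\infty}{W}^\Gamma$, not $\relC{W_\infty}{W}^\Gamma$, which suffices once one has already passed to the $D^*$-side; your stronger claim would need extra care.)

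The genuine gap is the sentence ``an explicit K-theoretic homotopy, supported on the cylinder and contracting the boundary-map data onto $\partial W\times\{0\}$, identifies it with the natural extension of the projection $P_{\partial W}$.'' This is the entire content of the paper's \emph{cylinder delocalized index theorem} (Theorem~\ref{theo:fundamental-cylinder}), namely $\delta_{\rm MV}(\rho(D_{\reals\times M}))=\rho(D_M)$ in $K_0(D^*(M)^\Gamma)$, and the authors remark explicitly that it ``has a surprisingly intricate proof.'' Two issues make your hand-wave insufficient. First, to even compare $\partial[\psi_+\chi(D_{\cyl})_+\psi_+]$ with $\rho(D_{\partial W})$ one must choose an isometry $V\colon \mathcal{L}^2(\partial W)\to\mathcal{L}^2(\reals_{\ge}\times\partial W)$ covering the inclusion \emph{in the $D^*$-sense} (Definition~\ref{def:cover-D*-sense}), and verifying this pseudolocality condition for a concrete $V$ is nontrivial; the paper constructs $(Vs)(t)=\sqrt{2|D|}e^{-t|D|}s$ and devotes Subsection~\ref{sec:proof_of_important_props} to checking it. Second, once $V$ is in hand, one needs an actual parametrix identity $Q\circ(\psi_+\chi(D_{\cyl})_+\psi_+)=\Id-V\chi_{[0,\infty)}(D)V^*$ realizing the boundary map explicitly. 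The paper obtains this only after deforming $\chi(D_{\cyl})_+$ through invertibles in $D^*$ to the non-obvious operator $\frac{|D|+\partial_t}{D-\partial_t}$ (Proposition~\ref{prop:deformed-in-d}, whose proof requires pseudodifferential arguments), and then carrying out a spectral-transform computation with Hardy spaces (Lemma~\ref{lem:proj-lambda} and the surrounding discussion). Your ``Clifford multiplication by $\partial_t$'' remark gestures at the product formula but does not produce this identity; without it there is no mechanism by which the exponential boundary map picks out exactly $\chi_{[0,\infty)}(D_{\partial W})$ rather than some other projection in $D^*(\partial W)^\Gamma$.
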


\begin{corollary}\label{corollary-main}\label{gamma-index}
  By functoriality, using the canonical $\Gamma$-map $u\colon W\to E\Gamma$ of
  Definition \ref{def:universal_Gamma_objects},
  we have $   \iota_* u_*(\coarseind (D_W))  =\rho_\Gamma(g_{\boundary W})
     \quad\text{in}\quad K_0 (D^*_\Gamma)$. 
 If we define 
$ \coarseind_\Gamma (D_W):=u_* (\coarseind (D_W))$ 
in
 $K_0(C^*_\Gamma)$, 
 then the last equation reads
  \begin{equation}\label{main-index-eq}
   \iota_* (\coarseind_\Gamma (D_W))  =\rho_\Gamma(g_{\boundary W})
   \quad\text{in}\quad K_0(D^*_\Gamma).
 \end{equation}
\end{corollary}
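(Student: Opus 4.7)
\emph{Plan.} The corollary should follow directly from Theorem~\ref{theo:k-theory-deloc} by applying the universal pushforward $u_*\colon K_*(D^*(W)^\Gamma)\to K_*(D^*_\Gamma)$ (and similarly for $C^*$) of Definition~\ref{def:universal_Gamma_objects}. The heart of the argument is naturality of the functorial maps of Proposition~\ref{prop:functoriality} with respect to (i) the subalgebra inclusion $\iota\colon C^*(W)^\Gamma\hookrightarrow D^*(W)^\Gamma$ and (ii) the inclusion of the boundary $j\colon D^*(\boundary W)^\Gamma\to D^*(W)^\Gamma$.

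Concretely, I would first fix a $\Gamma$-equivariant classifying map $u\colon W\to E\Gamma$ with image in a $\Gamma$-finite subcomplex; its restriction $u|_{\boundary W}$ then serves as a classifying map for $\boundary W$. Since the induced K-theory maps are independent of all auxiliary choices, the resulting maps to $K_*(C^*_\Gamma)$ and $K_*(D^*_\Gamma)$ are canonical, so I may apply $u_*$ to equation~\eqref{k-theory-deloc}. On the left-hand side, the inclusion $\iota$ is compatible with the pushforward because any isometry covering $u$ in the $D^*$-sense automatically covers it in the $C^*$-sense (the former condition is strictly stronger, cf.~Definition~\ref{def:cover-D*-sense}); thus
$$u_*\,\iota_*(\coarseind(D_W)) \;=\; \iota_*\,u_*(\coarseind(D_W)) \;=\; \iota_*(\coarseind_\Gamma(D_W)).$$
On the right-hand side, the diagram $\boundary W\hookrightarrow W\xrightarrow{u} E\Gamma$ and the paragraph following Definition~\ref{def:universal_Gamma_objects} (which describes the universal map through the system $\relD{U_R(\boundary W)}{\boundary W}^\Gamma$) yield $u_*\circ j_* = (u|_{\boundary W})_*$, so that $u_*\,j_*(\rho(g_{\boundary W})) = \rho_\Gamma(g_{\boundary W})$ by definition~\eqref{def-rho-gamma}. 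Assembling these two identifications proves~\eqref{main-index-eq}.

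The principal obstacle, such as it is, lies in checking that the (non-canonical) covering isometries used to define $u_*$ on $C^*$ and on $D^*$ and on the pair $(\boundary W,W)$ can be chosen consistently, so that the two squares above literally commute at the $C^*$-algebra level. Once one observes that only the induced K-theory maps matter, and that these are canonical, this reduces to routine bookkeeping within the framework of Proposition~\ref{prop:functoriality} and Lemma~\ref{lemma:from-hry}; no genuine analytic input beyond Theorem~\ref{theo:k-theory-deloc} is required.
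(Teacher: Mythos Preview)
Your proposal is correct and is precisely the argument the paper has in mind: the corollary is stated as following ``by functoriality'' from Theorem~\ref{theo:k-theory-deloc}, and you have simply (and accurately) unpacked what that functoriality entails---the compatibility of $u_*$ with $\iota_*$ and with $j_*$, together with the definitions of $\coarseind_\Gamma$ and $\rho_\Gamma$. The paper gives no separate proof beyond the word ``functoriality'', so your write-up is a faithful expansion of it.
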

 
This gives immediately bordism invariance of the $\rho$-classes:
\begin{corollary}\label{corol:part_index_special}
  Let $(M_1,g_1)$ and $(M_2,g_2)$ be two odd-dimensional free cocompact spin
  $\Gamma$-manifolds of positive scalar curvature. Assume that they are
  bordant as manifolds with positive scalar curvature, i.e.~that there is a
  Riemannian spin manifold $(W,g)$ with free cocompact $\Gamma$-action such
  that $\boundary W=M_1\disjointunion -M_2$, $g$ has positive
  scalar curvature and restricts to $g_j$ on $M_j$. Then
  \begin{equation*}
    \rho_\Gamma(g_1) = \rho_\Gamma(g_2) \in K_0 (D^*_\Gamma).
  \end{equation*}
\end{corollary}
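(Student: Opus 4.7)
The plan is to apply the delocalized APS index theorem (Corollary \ref{corollary-main}) to the bordism $(W,g)$, observing that because $g$ is psc everywhere (not merely on the boundary), the coarse index class itself vanishes, so that the right-hand side of the APS identity is forced to be zero.

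First, I would argue that $\coarseind_\Gamma(D_W)=0$ in $K_0(C^*_\Gamma)$. Let $W_\infty = W\cup_M M\times[0,\infty)$ as in the Geometric set-up \ref{geometric-set-up}; since $g$ is uniformly positive scalar curvature on all of $W$, its cylindrical extension on $W_\infty$ has uniformly positive scalar curvature on all of $W_\infty$. By Proposition \ref{prop:vanish} this means $\chi(D_{W_\infty})$ is already an involution in $D^*(W_\infty)^\Gamma$ (not just modulo $\relC{W_\infty}{W}^\Gamma$). Therefore the class $[D_{W_\infty}]\in K_n(D^*(W_\infty)^\Gamma/\relC{W_\infty}{W}^\Gamma)$ lifts through $K_n(D^*(W_\infty)^\Gamma)$, and by exactness of the long exact sequence of
\[0\to \relC{W_\infty}{W}^\Gamma \to D^*(W_\infty)^\Gamma\to D^*(W_\infty)^\Gamma/\relC{W_\infty}{W}^\Gamma\to 0,\]
its boundary $\coarseind^{\rm rel}(D_{W_\infty})=\partial[D_{W_\infty}]$ is zero. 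Via $c_*^{-1}$ this gives $\coarseind(D_W)=0$ in $K_n(C^*(W)^\Gamma)$, and after applying $u_*$ we obtain $\coarseind_\Gamma(D_W)=0$ in $K_0(C^*_\Gamma)$.

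Next, since $W$ is even-dimensional (its boundary is odd-dimensional) and $g_{\partial W}$ is psc, Corollary \ref{corollary-main} applies and yields
\[\rho_\Gamma(g_{\partial W}) \;=\; \iota_*\bigl(\coarseind_\Gamma(D_W)\bigr) \;=\; 0\quad\text{in}\quad K_0(D^*_\Gamma).\]
To conclude, I would use that the $\rho_\Gamma$-class is additive under disjoint union (the Dirac operator, spinor bundle and chopping function all decompose as direct sums) and changes sign under orientation reversal: since $\chi$ is odd and the Dirac operator of $-M$ is unitarily equivalent to $-D_M$, one has $\chi(D_{-M})=-\chi(D_M)$, hence $\rho_\Gamma(g_{-M})=-\rho_\Gamma(g_M)$. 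Since $\partial W=M_1\sqcup(-M_2)$, the vanishing above rewrites as $\rho_\Gamma(g_1)-\rho_\Gamma(g_2)=0$, which is the claim.

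The only genuinely delicate step is the first one, namely verifying that the lift of $[D_{W_\infty}]$ to $K_n(D^*(W_\infty)^\Gamma)$ is compatible with the boundary map in the expected way; but this is an immediate consequence of Proposition \ref{prop:vanish} together with the naturality of boundary maps in K-theory. Everything else is either the APS theorem applied directly or bookkeeping of disjoint unions and orientation reversals.
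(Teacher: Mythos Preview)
Your proof is correct and follows essentially the same approach as the paper's own proof: both argue that uniformly positive scalar curvature on $W_\infty$ forces $\coarseind_\Gamma(D_W)=0$, apply Corollary~\ref{corollary-main}, and then use additivity under disjoint union together with sign change under reversal of spin structure to conclude. You have simply supplied more detail on the vanishing step (via the lift through $K_n(D^*(W_\infty)^\Gamma)$ and exactness), which the paper compresses into a single sentence.
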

\begin{proof}
  The rho-class is additive for disjoint union and changes sign if one
  reverses the spin structure. Because $W$ and $W_\infty$ have uniformly
  positive scalar curvature, $\Ind(D_W)=0$; thus  $\Ind_\Gamma (D_W)=0$.
  The assertion now follows
  directly from
  Corollary \ref{corollary-main}.%
\end{proof}

\begin{remark}\label{remark:bordism-invariance}
Notice that bordism invariance holds only for $\rho_\Gamma$-classes; indeed,
we need a common $K$-theory group where we can compare the two invariants.
Precisely because of this last observation,
the following variant of Corollary \ref{corol:part_index_special}
holds:\\
 Let $(M_1,g_1)$ and $(M_2,g_2)$ be two free cocompact spin
  $\Gamma$-manifolds of positive scalar curvature endowed with 
 $\Gamma$-equivariant  reference maps $f_1, f_2$ to a Hausdorff topological 
 $\Gamma$-space $\widetilde{X}$ with compact quotient $X:= \widetilde{X}/\Gamma$. 
 Assume that  there exists a
  Riemannian spin manifold $(W,g)$ as in  Corollary \ref{corol:part_index_special}
  endowed with a $\Gamma$-equivariant reference map $F\colon W\to \widetilde{X}$ such
  that $F|_{M_j}=f_j$. Then, defining $\rho_{\widetilde{X}} (g_j) := (f_j)_* \rho (g_j)\in 
  K_0 (D^* (\widetilde{X})^\Gamma)$, we have the following identity:
  \begin{equation}\label{bordism_X}
    \rho_{\widetilde{X}}  (g_1) = \rho_{\widetilde{X}} (g_2) \in K_0 ( D^* (\widetilde{X})^\Gamma ).
  \end{equation}
  
 \begin{proof}
 Denote by $\iota_{\widetilde{X}}: C^* (\widetilde{X})^\Gamma \to D^* (\widetilde{X})^\Gamma$ the inclusion
 and similarly for $\iota_W$. Let $j_1$ and $j_2$ be the natural inclusions
 $M_j\hookrightarrow W$. Then, from Theorem \ref{theo:k-theory-deloc}  
 we get
 $$(\iota_W)_* (\Ind (D_W))= (j_1)_*   \rho (g_1) - (j_2)_*   \rho (g_2)\quad\text{in}\quad K_0 ( D^* (W)^\Gamma )$$
 We now apply $F_* \colon K_0 ( D^* (W)^\Gamma )\to K_0 ( D^* (\widetilde{X})^\Gamma )$. Since $F\circ j_1=f_1$
 and $F\circ j_2 = f_2$ and since $F_* (\iota_W)_*=(\iota_{\widetilde{X}})_* F_*$, with the $F_*$
 on the right hand side going from $K_0 ( C^* (W)^\Gamma )$
 to
$  K_0 ( C^* (\widetilde{X})^\Gamma )$,  we see that
 $$ (\iota_X)_* (F_*  (\Ind (D_W))= (f_1)_* \rho (g_1) - (f_2)_*\rho (g_2)$$
 Since the left hand side vanishes (recall that $g$ on $W$ is of positive scalar
 curvature) this is precisely what we wanted to prove.
   \end{proof}
  \end{remark}

\begin{remark}\label{rem:odd_rem}
  We are convinced that the theorem also is correct if $\dim(W)$ is odd. In the
  present paper we only deal with the even case, By using $Cl_n$-linear Dirac
  operators and an appropriate setup for $Cl_n$-linear (also called
  $n$-multigraded) cycles for K-theory, we expect that our method should
  generalize to all dimensions and also to the refined invariants in real
  K-theory one can get that way. We plan to address the details in future
  work.
\end{remark}

\begin{remark}
As we shall see, Theorem \ref{theo:k-theory-deloc} has a surprisingly intricate proof.
A different approach for proving it would be to develop a theory for the
Calderon projector $P$
associated to a Dirac-type operator 
on a Galois covering with boundary. 
In this direction, recall
the classical formula for the APS numeric index in terms of the Calderon projection $P$
and the APS projection $\Pi_{\geq}$: $\ind^{{\rm APS}} D^+={\bf i} (\Pi_{\geq},P)$.
 If one were able to extend this formula to the APS-index class, then the theorem
 would follow provided one could establish, in addition, that
the image of  the class of the Calderon projector $[P]$ in $K_0 (D^* (W)^\Gamma)$ vanishes. It
 would be very interesting to work out this alternative approach to Theorem
 \ref{theo:k-theory-deloc}, which seems to be, however, quite an intricate
 question. A first step in this direction is carried out in \cite{Antonini},
 where the Calderon projector for $C^*$-module coefficients is constructed.
\end{remark}

\begin{example}
  The morphisms $C^*(M)^\Gamma\to C^*(W)^\Gamma\to \relC{W_\infty}{W}^\Gamma$
  induce (canonical) 
  isomorphisms in K-theory by Lemma \ref{lemma:from-hry} and because $M\to W$
  is a coarse equivalence, as $W/\Gamma$ is compact. Consequently we can map
  $\coarseind(D_W)$ also to 
  $K_*(D^*(M)^\Gamma)$ and compare its image there to
  $\rho(g_M)$. 

  It turns out that in general these two objects are different, so that a
  corresponding sharpening of Theorem \ref{theo:k-theory-deloc} is not
  possible. Indeed, an additional secondary term, a rho-class of a bordism,
  shows up. This secondary class appears naturally when one gives a proof of
  bordism invariance of the rho-index using suitable exact sequences of
  K-theory of Roe algebras and the principle that ``boundary of Dirac is
  Dirac''. We plan to work this out in a sequel publication.
  
  Explicitly, take $W=D^{n+1}$ with $\boundary W=S^n$, with the standard
  metrics (slightly modified to have product structure near the boundary, but
  clearly with positive scalar curvature as long as $n>1$).

  Because of overall positive scalar curvature, $\coarseind(D_W)\in
  K_*(C^*(W))=K_*(\complexs)$ vanishes, and so does its image in
  $K_*(D^*(S^n))$.

  On the other hand, the Dirac operator on $S^n$ represents the fundamental
  class, a non-trivial element in $K_n (S^n)$. By the
  commutativity of the diagram \eqref{eq:StolzToAna}, another main
  theorem of this paper, $\rho(g_{S^n})\in
  K_n(D^*(S^n))$ has to be non-trivial, being mapped to a non-trivial element in
  $K_n(D^*(S^n)/C^*(S^n))=K_n(S^n)$. Observe that this is a purely
  topological phenomenon, having nothing to do with analysis. 
\end{example}

\subsection{Secondary index theorem for $\rho$-classes on partitioned
  manifolds}\label{subsect:intro-partitioned}

In this section, we formulate a partitioned manifold secondary index theorem,
for the $\rho$-class on a manifold of uniformly positive scalar curvature. 

For this aim, let $W$ be a (non-compact) Riemannian spin manifold of
dimension $n+1$ with
isometric free $\Gamma$-action and assume that there is a $\Gamma$-invariant
two-sided hypersurface $M\subset W$ such that $M/\Gamma$ is compact. We get a
decomposition $W=W_-\cup_M W_+$.

Let us quickly recall the primary partitioned manifold index theorem. The
classical case is $\Gamma=\{1\}$, then we obtain $\Ind(D_W)\in
K_{n+1}(C^*(W))$. The partition allows to construct a map (showing up in a
corresponding Mayer-Vietoris sequence as in Section
\ref{subsect:mayer-vietoris}) to $K_n (C^*(M))=K_n(\complexs)$. The 
partitioned manifold theorem of Roe \cite{roe-partitioning} then simply states that
the image of $\Ind(D_W)$ 
under this map is  $\ind(D_M)$. The corresponding statement for non-trivial
$\Gamma$ and even $n$ is covered in \cite{Zadeh}. 

We now treat the same question for the secondary rho class of manifolds with
uniformly positive scalar curvature. Indeed, let us first give a direct
definition of the partitioned manifold rho-class, similar to the definition of
the partitioned manifold index as given by Higson \cite{higson-cobordism}.

\begin{definition}
  Assume, in the above situation, that $W$ has dimension $n+1$
  and uniformly positive scalar
  curvature. Then 
  we constructed
  $\rho(D_W)\in K_{n+2}(D^*(W)^\Gamma )$. Consider the image of $[D_W]$ under the
  $D^*$-Mayer-Vietoris   boundary map for the decomposition of $W$ into $W_+$
  and   $W_-$ along $W$ (discussed in Section
  \ref{subsect:mayer-vietoris}):
  $\delta_{{\rm MV}} [D_W]
  \in K_{n+1} (D^*({M})^\Gamma)$.
   We set
  \begin{equation}\label{pre-pm-rho}
   \rho^{{\rm pm}}(g):= \delta_{{\rm MV}} [D_W]
  \in K_{n+1} (D^*({M})^\Gamma)
  \end{equation}
  and we call it the {\it partitioned manifold $\rho$-class}
  associated to the partitioned manifold $W=W_-\cup_M W_+$.
We shall be mainly concerned with a universal version of 
this class: we consider the canonical map $u\colon M\to E\Gamma$
 and we set
\begin{equation}\label{pm-rho}
  \rho_\Gamma^{{\rm pm}}(g):= u_* (\delta_{{\rm MV}} [D_W])\in K_{n+1}(D^*_\Gamma).
\end{equation}
We call this secondary invariant the {\it partitioned manifold
  $\rho_\Gamma$-class} 
associated to $W=W_-\cup_M W_+$.
\end{definition}


\begin{theorem} \label{theo:part_mf_abst}
Let $(W,g)$ be a connected spin manifold partitioned by a
hypersurface 
$M$ into $W_-\cup_M W_+$. Let $\Gamma$ act freely on $(W,M)$. Let $\dim(W)=n+1$ be even. Assume that the metric $g$
on $W$ has uniformly positive scalar 
curvature and that the metric on a tubular neighborhood of the hypersurface
$M$ has product structure, so that the induced metric $g_M$ also has positive
scalar curvature. Assume, finally, that $M/\Gamma$ is compact.
Then
\begin{equation*}
  \rho^{{\rm pm}}_\Gamma (g) =  \rho_{\Gamma} (g_M)\in K_{n+1} (D^*_\Gamma).
\end{equation*}
\end{theorem}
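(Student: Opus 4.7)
The plan is to reduce the theorem, via excision/naturality of the Mayer--Vietoris construction, to the cylindrical model $Z := M \times \RR$ with product metric $g_Z := g_M + dt^2$, and then to compute $\delta_{\rm MV}^Z[D_Z]$ explicitly using the product form of the Dirac operator. The strategy parallels Roe's primary partitioned manifold index theorem \cite{roe-partitioning}, but works entirely inside $D^*$ (rather than passing through $C^*$ via the symbol): the uniform positive scalar curvature makes $\chi(D_W)$ an honest involution in $D^*(W)^\Gamma$, so that $[D_W]$ already lives as a class in $K_{n+2}(D^*(W)^\Gamma)$.

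For the first step, a tubular neighborhood $U \cong M \times (-\epsilon, \epsilon)$ of $M$ in $W$ is by hypothesis isometric to the corresponding slab in $Z$. Using finite propagation speed for the chopping calculus (choose $\chi$ so that $\widehat{\chi}$ has support in $[-\epsilon/4,\epsilon/4]$), the element $\chi(D_W)$ can be replaced by a representative of propagation less than $\epsilon/2$; the $D^*$ Mayer--Vietoris boundary, computed by conjugating with cut-off functions supported inside $U$, then depends only on the functional calculus restricted to $U$. Naturality of $\delta_{\rm MV}$ under the equivariant isometric embedding $U \into Z$ (compatible with both partitions) yields
\begin{equation*}
\delta_{\rm MV}^W [D_W] \;=\; \delta_{\rm MV}^Z [D_Z] \quad \in K_{n+1}(D^*(M)^\Gamma),
\end{equation*}
reducing the problem to the cylindrical case.

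For the second step, the cylinder Dirac operator decomposes as $D_Z = c(\partial_t)(\partial_t + D_M)$, so $\chi(D_Z)$ expands via joint functional calculus in $\partial_t$ and $D_M$, using the Lichnerowicz identity $D_Z^2 = -\partial_t^2 + D_M^2 + \scal(g_M)/4$. Picking a continuous cut-off $\phi\colon Z \to [0,1]$ that vanishes on $Z_-$ and equals $1$ on $Z_+$ outside a neighbourhood of $M$, the explicit Mayer--Vietoris boundary formula applied to the representative of $[D_Z]$ (the projection $(1+\chi(D_Z))/2$ or the unitary $U\chi(D_Z)_+$, depending on parity conventions) is built from $[\phi, \chi(D_Z)]$. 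A Wiener--Hopf-type calculation in the $t$-variable, or equivalently a Bott/suspension argument identifying $K_{n+2}(D^*(Z)^\Gamma)$ with $K_{n+1}(D^*(M)^\Gamma)$, shows that modulo operators in $C^*(M)^\Gamma$ this representative coincides with the projection $(1+\chi(D_M))/2$ which represents $[D_M] = \rho(g_M)$.

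Combining the two steps yields $\delta_{\rm MV}^W[D_W] = \rho(g_M)$ in $K_{n+1}(D^*(M)^\Gamma)$, and pushing forward along $u_*$ gives $\rho^{\rm pm}_\Gamma(g) = \rho_\Gamma(g_M)$ in $K_{n+1}(D^*_\Gamma)$. The main obstacle will be the cylinder computation in step two: producing an explicit, manageable representative for $\delta_{\rm MV}^Z[D_Z]$ and matching it with $[D_M]$ requires fine control of the commutators $[\phi, \chi(D_Z)]$. These are pseudo-local by standard arguments and hence land in $D^*$, but one needs to see further that, after the cancellations prescribed by the Mayer--Vietoris coboundary, the surviving operator is concentrated near $M$ modulo $C^*(M)^\Gamma$ and represents the same $K$-theory class as $\chi(D_M)$. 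Homotopy invariance of the class with respect to $\phi$, combined with a Fourier/suspension decomposition in the cylinder direction, should make this tractable, but the bookkeeping is the core technical point of the argument.
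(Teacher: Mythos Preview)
Your two-step strategy---reduce to the cylinder, then compute on the cylinder---is exactly the paper's plan. But step~1 as you have written it contains a real gap, and your claimed conclusion for step~1 is in fact stronger than what the paper is able to prove.

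The gap is the finite-propagation argument. To define $\rho(g)=[D_W]\in K_{n+2}(D^*(W)^\Gamma)$ you need $\chi(D_W)$ to be an \emph{honest} involution in $D^*(W)^\Gamma$, i.e.\ $\chi^2\equiv 1$ on $\spec(D_W)$. But a chopping function with $\widehat{\chi}$ supported in $[-\epsilon/4,\epsilon/4]$ extends to an entire function; if it equals $+1$ on $[\delta,\infty)$ it is identically $+1$. So you cannot simultaneously have small propagation and the involution property, and therefore you cannot simply say that ``the $D^*$ Mayer--Vietoris boundary depends only on the functional calculus restricted to $U$''. In the primary partitioned-manifold theorem this works because one only needs $\chi(D_W)$ to be an involution \emph{modulo} $C^*(W)^\Gamma$, which is automatic for any chopping function; here that escape route is closed.

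Relatedly, your conclusion $\delta_{\rm MV}^W[D_W]=\delta_{\rm MV}^Z[D_Z]$ in $K_{n+1}(D^*(M)^\Gamma)$ is \emph{not} established in the paper and is explicitly left open (see the remark following the theorem). The paper's reduction changes only one side at a time: it compares $W=W_-\cup_M W_+$ with $W_{\cyl,+}=(\RR_{\le 0}\times M)\cup_M W_+$, showing via a Fourier-truncation argument that $\chi_{W_+}\chi(D_W)\chi_{W_+}$ and $\chi_{W_+}\chi(D_{W_{\cyl,+}})\chi_{W_+}$ agree in $D^*(W_+)^\Gamma/\relD{W_+}{M}^\Gamma$. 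To then compare the resulting Mayer--Vietoris boundaries (which a priori live in $K_*(\relD{W}{M}^\Gamma)$ and $K_*(\relD{W_{\cyl,+}}{M}^\Gamma)$, two different groups) one must push everything forward along the classifying map to $E\Gamma$; this is why the conclusion is only in $K_{n+1}(D^*_\Gamma)$. Your one-step jump $W\rightsquigarrow Z$ runs into the same issue more severely, since $W_+$ and $[0,\infty)\times M$ are not even the same space.

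For step~2 your description is in the right spirit, and the paper's proof is indeed a Wiener--Hopf/Hardy-space computation (the cylinder delocalized index theorem). But it is considerably more delicate than your sketch suggests: one must produce an explicit isometry $V\colon \mathcal{L}^2(M)\to\mathcal{L}^2(\RR_{\ge 0}\times M)$ covering the inclusion in the $D^*$-sense, deform $\chi(D_{\cyl})_+$ to $(|D|+\partial_t)/(D-\partial_t)$ inside $D^*(\RR\times M)^\Gamma$, and then match the resulting boundary class against $V\chi_{[0,\infty)}(D)V^*$ via an explicit Hardy-space argument. Simply invoking ``a Bott/suspension argument'' does not do this; that suspension isomorphism is exactly what has to be made concrete at the level of $D^*$.
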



\begin{remark}
  We are convinced that the assertion of Theorem \ref{theo:part_mf_abst} also
  holds if $\dim(W)$ is odd. As detailed in Remark \ref{rem:odd_rem}, with
  appropriate new multigrading input our method might carry over. Again, we
  hope to work this out in the future.
\end{remark}

\begin{corollary}
  Let $W$ be as in Theorem \ref{theo:part_mf_abst} with two
  $\Gamma$-equivariant metrics $g^0,g^1$
  of uniformly positive scalar curvature (and in the same coarse equivalence
  class) which are of product type near $M$. If  $g^0,g^1$ are connected by a
  path of
  uniformly positive $\Gamma$-equivariant metrics $g^t$ in the same coarse
  metric class (not necessarily
  product near $M$) then $\rho^{{\rm pm}}_\Gamma (g^0_M)=\rho^{{\rm
      pm}}_\Gamma (g^1_M) \in K_{n+1}(D^*_\Gamma)$.
  \end{corollary}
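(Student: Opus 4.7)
The plan is to use Theorem \ref{theo:part_mf_abst} to reduce the corollary to the statement that the partitioned manifold $\rho$-class $\rho^{\rm pm}_\Gamma(g^t) \in K_{n+1}(D^*_\Gamma)$ is independent of $t \in [0,1]$. Indeed, applying Theorem \ref{theo:part_mf_abst} to both $g^0$ and $g^1$ (which, by assumption, are product near $M$), we obtain
\begin{equation*}
\rho^{\rm pm}_\Gamma(g^0) = \rho_\Gamma(g^0_M), \qquad \rho^{\rm pm}_\Gamma(g^1) = \rho_\Gamma(g^1_M),
\end{equation*}
so it suffices to prove $\rho^{\rm pm}_\Gamma(g^0) = \rho^{\rm pm}_\Gamma(g^1)$. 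Crucially, the \emph{definition} of $\rho^{\rm pm}_\Gamma(g^t) = u_*(\delta_{\rm MV}[D_{W,g^t}])$ only requires uniformly positive scalar curvature on $W$ (so that $\chi(D_{W,g^t})$ is an involution in $D^*(W)^\Gamma$) and does \emph{not} require product structure near $M$, which is why the intermediate metrics $g^t$ need not be of product type.

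Next, I would show that the family of involutions $\chi(D_{W,g^t}) \in D^*(W)^\Gamma$ is norm-continuous in a common representation, and therefore $[D_{W,g^t}] \in K_{n+2}(D^*(W)^\Gamma)$ is constant in $t$. The first task is to identify the Hilbert spaces $L^2(S_{g^t})$ for different $t$. Since the underlying smooth manifold and spin structure are fixed, and since all $g^t$ lie in a single coarse equivalence class (hence are comparable to a common reference metric), one obtains canonical $\Gamma$-equivariant bundle isomorphisms $S_{g^t} \to S_{g^0}$ together with measure-comparison Jacobians that give $\Gamma$-equivariant unitary isomorphisms of $L^2$-sections covering $\id_W$ in the $D^*$-sense. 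Under this identification, a standard argument using functional calculus on $\chi$ via its Fourier transform (with finite propagation speed, as in \cite[Proposition 2.3]{roe-partitioning}) shows that $t \mapsto \chi(D_{W,g^t})$ is norm-continuous in $D^*(W)^\Gamma$. Uniform positivity of the scalar curvature along the path yields a uniform spectral gap around $0$, so the involution property $\chi(D_{W,g^t})^2 = 1$ remains stable up to a norm-continuous family. Homotopy invariance of K-theory then gives $[D_{W,g^0}] = [D_{W,g^1}]$ in $K_{n+2}(D^*(W)^\Gamma)$, and applying the functorial maps $\delta_{\rm MV}$ and $u_*$ yields $\rho^{\rm pm}_\Gamma(g^0) = \rho^{\rm pm}_\Gamma(g^1)$, completing the argument.

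The main obstacle, as in most continuous-family arguments for Roe-algebra classes, will be verifying norm-continuity of $t \mapsto \chi(D_{W,g^t})$ in $D^*(W)^\Gamma$ \emph{after} identifying the varying $L^2$-spaces. The subtlety is that continuity of the Dirac operator itself with respect to the metric only holds in the graph-norm sense, which need not translate directly to norm continuity of its bounded functional calculus. The standard workaround is to approximate $\chi$ by Schwartz functions whose Fourier transforms have compact support — for such approximants, $\chi(D_{W,g^t})$ is a finite-propagation integral of wave operators $e^{isD_{W,g^t}}$, and continuity of the latter in $t$ (as bounded operators, via Duhamel's principle) is a standard consequence of the elliptic estimates. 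One must also ensure the coarse-geometric structure (finite propagation bounds) is uniform in $t$, which uses precisely the hypothesis that all $g^t$ are in the same coarse class.
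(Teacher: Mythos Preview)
Your approach is essentially the same as the paper's: the paper's entire proof is the single sentence ``We simply have to observe that we get a homotopy $\rho^{{\rm pm}}_\Gamma (g^t)$ between $\rho^{{\rm pm}}_\Gamma(g^0)$ and $\rho^{{\rm pm}}_\Gamma (g^1)$ in $D^*_\Gamma$ and then apply homotopy invariance of K-theory.'' You correctly identify this homotopy step as the core of the argument, and you add the (straightforward) reduction via Theorem~\ref{theo:part_mf_abst} together with a sketch of the norm-continuity verification that the paper leaves implicit.
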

\begin{proof}
  We simply have to observe that we get a
  homotopy $\rho^{{\rm pm}}_\Gamma (g^t)$ between $\rho^{{\rm pm}}_\Gamma
  (g^0)$ and $\rho^{{\rm pm}}_\Gamma (g^1)$ in $D^*_\Gamma$ and then apply
  homotopy invariance of K-theory.
\end{proof}

 As an application of this Corollary,  assume that $M/\Gamma$, which
 is assumed to be compact, has two metrics
  $g_0$, $g_1$ with $\Gamma$-invariant lifts
 $ \tilde g_0$, $\tilde g_1$ that have the property that
  $\rho_\Gamma (\tilde g_0)\ne \rho_\Gamma (\tilde g_1)\in K_{n+1}(D^*_\Gamma)$.
Of course, this implies that the two metrics are not concordant on $M/\Gamma$.
Stabilize by taking the product with $\reals$ (with the standard metric).
  We can now conclude that even with the extra room on $M/\Gamma\times \reals$
  we can 
  not deform $g_0+dt^2$ to $g_1+dt^2$ through metrics of uniformly positive
  scalar curvature. This follows directly from the Corollary.

%




\medskip
  The strategy of proof for this $\rho$-version of the partitioned manifold
  index theorem is the same as the classical one:

  \begin{enumerate}
  \item we prove it with an explicit calculation for the product case;
   \item we prove that the partitioned manifold rho-class depends only on a
     small neighborhood of the hypersurface.
  \end{enumerate}

  \begin{remark}
    In the situation of Theorem \ref{theo:part_mf_abst}, both $\rho^{\rm pm}(g)$
    and $\rho(g_M)$ are defined in $K_*(D^*(M)^\Gamma)$. However, our method
    does not give any information about equality of these classes, only about
    their images in $K_*(D^*_\Gamma)$. This is in contrast to Theorem
    \ref{theo:k-theory-deloc}, where the equality is established in
    $K_*(D^*(W)^\Gamma)$.

    On the other hand, we also don't have an example where the partitioned
    manifold $\rho$-class does not coincide with the $\rho$-class of the cross
    section. It is an interesting challenge to either find such examples, or
    to improve the partitioned manifold secondary index theorem. The latter
    would be important in particular
    in light of applications like the stabilization problem we just discussed:
    if $g_1,g_2$ on $M$ are positive scalar curvature metrics which are not
    concordant, is the same true for $g_1+dt^2$ and $g_2+dt^2$ on
    $\reals\times M$?
  \end{remark}

\subsection{Mapping the positive scalar curvature sequence to analysis} 

  The Stolz exact sequence is the companion for positive scalar curvature of
  the surgery exact sequence in the classification of high dimensional
  manifolds. The latter
  connects the structure set, consisting of all manifold structures in a given
  homotopy type, with the generalized homology theory given by the L-theory
  spectrum and the algebraic L-groups of the fundamental group. 

  Similarly, Stolz' sequence connects the ``structure set''
  $\Pos^{\spin}$ (compare Definition \ref{def:Stolz_seq_terms}), which contains the
  equivalence classes of metrics of positive scalar curvature to the generalized homology group $\Omega^{\spin}$ and to
  $R^{\spin}(X)$. The latter indeed is a group which only depends
  on the fundamental group of $X$. It is similar to the geometric definition
  of L-groups. Missing until now is an algebraic and computable description of
  these $R$-groups, in contrast to the $L$-groups of surgery. 

In this subsection, we construct a map from
the Stolz  positive scalar curvature exact sequence to analysis. We give a
picture which 
describes the transformation as directly as possible, using 
indices defined via coarse geometry.

\begin{definition}\label{def:Stolz_seq_terms}
  Given a reference space $X$ (often
  $X=B\Gamma$), define
  \begin{itemize}
  \item $\Pos^{\spin}_n(X)$ as the set of singular bordism classes $(M,f\colon
    M\to 
    X,g)$  of  $n$-dimensional closed 
    spin manifolds $M$ together with a reference map $f$ and a positive scalar
    curvature metric $g$ on $M$. A bordism between  $(M,f\colon M\to X,g)$
    and  $(M^\prime,f^\prime\colon M^\prime\to X,g^\prime)$
    consists of a compact manifold with boundary $W$, with $\pa W= M\sqcup
    (-M^\prime)$, a reference map $F\colon W\to X$ restricting
    to $f$ and $f^\prime$ on the boundary
     and a positive scalar curvature metric on $W$
    which has product structure near the boundary and  restricts to $g$ and $g^\prime$
    on the boundary.
  \item $R^{\spin}_{n+1}(X)$ as the set of bordism classes $(W,f,g)$ where $W$ is a
    compact $(n+1)$-dimensional spin-manifold, possibly with boundary, with a
    reference map 
    $f\colon W\to X$, and with a positive scalar curvature metric \emph{on the
      boundary} when the latter is non-empty. 
     Two triples 
$(W,f,g_{\pa W})$, $(W^{\prime},f^{\prime},g^{\prime}_{\pa
W})$ are bordant 
if  there is a bordism with positive scalar curvature
between the two boundaries, call it $N$, such that 
$$Y:=W\cup_{\pa W} N\cup_{-\pa W^{\prime}} (-W^\prime)$$ 
is the boundary of a spin manifold $Z$. The reference maps to $X$ have to
  extend over $Z$.
By the surgery method for the construction of positive scalar curvature
    metrics, this set actually depends only on the fundamental group of $X$
    if $X$ is connected, compare \cite[Section 5]{Rosenberg-Stolz_survey}. 
  \item $\Omega_n^{spin}(X)$ is the usual singular spin bordism group of $X$.
  \end{itemize}
\end{definition}

\begin{proposition}  As a direct consequence of the definitions we get a long
  exact sequence, the \emph{Stolz exact sequence}
  \begin{equation*}
    \to \Pos^{\spin}_n(X)\to \Omega_n^{spin}(X)\to R^{\spin}_{n}(X) \to
    \Pos^{\spin}_{n-1}(X)\to
  \end{equation*}
  with the obvious boundary or forgetful maps.  
\end{proposition}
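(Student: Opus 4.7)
The plan is to identify the three natural maps as ``forget the metric'', ``view a closed manifold as a bordism with empty boundary'', and ``pass to the boundary'', then verify that each is well-defined on bordism classes and prove the two inclusions of exactness at each term. All maps are clearly functorial in the reference map $f\colon -\to X$.

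First I would spell out the maps. The forgetful map $\Pos^{\spin}_n(X)\to\Omega^{\spin}_n(X)$ sends $[(M,f,g)]$ to $[(M,f)]$; it is well defined because any psc-bordism in $\Pos^{\spin}$ is in particular a spin bordism. The map $\iota\colon\Omega^{\spin}_n(X)\to R^{\spin}_n(X)$ sends a closed $(M,f)$ to $[(M,f,\varnothing)]$ using the convention that the psc condition is vacuous when $\partial W=\varnothing$; this is well-defined because a spin bordism $W$ between closed $M$ and $M'$ has $\partial W=M\sqcup-M'$ (still empty boundary after the identification, since we are not prescribing a boundary psc metric; more formally, we can set the ``$N$'' appearing in the $R$-relation to be $\varnothing$ and let $Z=W$). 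The boundary map $\partial\colon R^{\spin}_n(X)\to\Pos^{\spin}_{n-1}(X)$ sends $[(W,f,g_{\partial W})]$ to $[(\partial W,f|_{\partial W},g_{\partial W})]$: if $(W,f,g_{\partial W})\sim(W',f',g'_{\partial W'})$ via $(N,Z)$, then $N$ itself is exactly the required psc-bordism between the boundaries.

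Next I would check exactness at each of the three positions; in every case both inclusions follow by transporting data across the definitions.

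\textbf{At $\Omega^{\spin}_n(X)$.} The composition is zero because if $(M,f,g)\in\Pos^{\spin}_n$, then $[M,f,\varnothing]\in R^{\spin}_n$ is killed by using $N=M$ with its psc metric $g$ and $Z=M\times[0,1]$. Conversely, if $[M,f,\varnothing]=0$ in $R^{\spin}_n$, there is a closed psc manifold $N$ and a spin null-bordism $Z$ of $M\sqcup N$, compatibly with references; this shows $[M,f]=-[N,f_N]$ in $\Omega^{\spin}_n$, and the latter lies in the image of $\Pos^{\spin}_n$.

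\textbf{At $R^{\spin}_n(X)$.} If $[W,f,g_{\partial W}]=\iota[M,f]$ arises from a closed manifold we may take $N=\varnothing$, and then $\partial$ sends it to $0\in\Pos^{\spin}_{n-1}$. Conversely, if $(\partial W,f|_{\partial W},g_{\partial W})$ is null-bordant in $\Pos^{\spin}$ via a compact psc manifold $V$ with $\partial V=\partial W$, then $W\cup_{\partial W}V$ is closed, and the pair $(N,Z)=(V,(W\cup_{\partial W}V)\times[0,1])$ exhibits $[W,f,g_{\partial W}]=\iota[W\cup_{\partial W}V,F]$.

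\textbf{At $\Pos^{\spin}_{n-1}(X)$.} Obviously $\partial\circ\iota=0$ at the level of representatives, since the boundary of a closed manifold is empty. Conversely, if $[M,f,g]\in\Pos^{\spin}_{n-1}$ maps to zero in $\Omega^{\spin}_{n-1}$, then $(M,f)=\partial(V,F)$ for some compact spin manifold $V$; the element $[(V,F,g)]\in R^{\spin}_n(X)$, constructed by putting any metric on $V$ that is a product $g+dt^2$ near the boundary, maps onto $[M,f,g]$.

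The only mildly delicate point is bookkeeping the reference maps and the product-near-the-boundary condition through the constructions (so that glued objects remain smooth spin manifolds with collared psc metrics); this is handled by standard collar and bending arguments, and is the closest thing to a technical obstacle, but it is entirely routine.
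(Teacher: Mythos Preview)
Your argument is the standard verification and is exactly the kind of detail the paper omits: the proposition is stated with the phrase ``as a direct consequence of the definitions'' and no proof is given. So there is nothing to compare against --- you have supplied what the paper leaves to the reader.

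There is one slip. In your paragraph on exactness at $\Pos^{\spin}_{n-1}(X)$, the sentence ``Obviously $\partial\circ\iota=0$ at the level of representatives, since the boundary of a closed manifold is empty'' is the wrong composite --- that is the statement you already used at $R^{\spin}_n(X)$. What you need here is that the composite
\[
R^{\spin}_n(X)\xrightarrow{\ \partial\ }\Pos^{\spin}_{n-1}(X)\xrightarrow{\ \text{forget}\ }\Omega^{\spin}_{n-1}(X)
\]
vanishes: given $[W,f,g_{\partial W}]$, its image is $[\partial W,f|_{\partial W}]$, which is zero because $\partial W$ bounds $W$ (with the reference map $f$ extending over $W$). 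Your converse for this step is correct as stated.

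A minor cosmetic point: you are a bit casual with orientations throughout (e.g.\ writing $N=M$ where $N=-M$ is needed so that $M\sqcup N$ actually bounds $M\times[0,1]$), but these are routine to fix and do not affect the argument.
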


\begin{theorem}\label{theo:commute-r-pos}
For $X$ a compact space with fundamental group $\Gamma$ and universal covering
$\tilde X$, there exists a well defined and commutative diagram, if $n$ is odd,
\begin{equation}\label{eq:StolzToAnaX}
  \begin{CD}
   @>>> \Omega^{\spin}_{n+1} (X) @>>>   R^{\spin}_{n+1}(\Gamma) @>>>
    \Pos^{\spin}_n (X) @>>> \Omega^{\spin}_n (X) @>>>\\ 
    && @VV{\beta}V @VV{\Ind_\Gamma}V  @VV{\rho_\Gamma}V @VV{\beta}V\\
     @>>>  K_{n+1} (X) @>>> K_{n+1} ( C^*_r\Gamma) @>>>
     K_{n+1}(D^*(\tilde X)^\Gamma)  @>>>  K_{n} (X) @>>>\\
    \end{CD}
\end{equation}
We also get a universal commutative diagram
\begin{equation}\label{eq:StolzToAna}
  \begin{CD}
   @>>> \Omega^{\spin}_{n+1} (B\Gamma) @>>>   R^{\spin}_{n+1}(B\Gamma) @>>>
    \Pos^{\spin}_n (B\Gamma) @>>> \Omega^{\spin}_n (B\Gamma) @>>>\\ 
    && @VV{\beta}V @VV{\Ind_\Gamma}V  @VV{\rho_\Gamma}V @VV{\beta}V\\
     @>>>  K_{n+1} (B\Gamma) @>>> K_{n+1} ( C^*_r\Gamma) @>>>
     K_{n+1}(D^*_\Gamma)  @>>>  K_{n} ( B\Gamma) @>>>\\
    \end{CD}
\end{equation}
\end{theorem}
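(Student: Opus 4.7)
The plan is to define the three vertical maps, verify each is well defined on bordism classes, and check commutativity of the three squares using results already in the paper.

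\emph{Definition of the vertical maps.} The map $\beta\colon\Omega^{\spin}_n(X)\to K_n(X)$ is the classical spin K-orientation sending $(M,f)\mapsto f_*[D_M]$, with $[D_M]\in K_n(M)$ the K-homology class of the Dirac operator. For $(M,f,g)\in\Pos^{\spin}_n(X)$, the lift $\tilde g$ of $g$ to the $\Gamma$-cover $\tilde M\to M$ is a complete $\Gamma$-invariant metric of uniformly positive scalar curvature, and $\tilde f\colon\tilde M\to\tilde X$ is the canonical $\Gamma$-equivariant lift of $f$; I set $\rho_\Gamma(M,f,g):=\tilde f_*\rho(\tilde g)\in K_{n+1}(D^*(\tilde X)^\Gamma)$. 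For $(W,f,g_{\pa W})\in R^{\spin}_{n+1}(\Gamma)$ ($n$ odd, so $\dim W$ even), I lift to the $\Gamma$-cover $\tilde W$ and define $\Ind_\Gamma(W,f,g_{\pa W}):=\tilde f_*\coarseind(D_{\tilde W})$ via Definition~\ref{def:coarse_ind}, together with the canonical isomorphism $K_*(C^*(\tilde X)^\Gamma)\cong K_*(C^*_r\Gamma)$ from Definition~\ref{def:universal_Gamma_objects}.

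\emph{Well-definedness.} Bordism invariance of $\beta$ is classical. That of $\rho_\Gamma$ is exactly the variant of Corollary~\ref{corol:part_index_special} proved in Remark~\ref{remark:bordism-invariance}, with $\tilde X$ playing the role of the Hausdorff $\Gamma$-space. For $\Ind_\Gamma$, given an $R$-bordism datum $(W,W',N,Z)$ with $Y:=W\cup_{\pa W}N\cup_{-\pa W'}(-W')=\pa Z$, I would combine three facts: (i) $\coarseind_\Gamma(D_N)=0$ because $N$ carries uniformly positive scalar curvature throughout, so $\chi(D_N)$ is an involution in $D^*(\tilde N)^\Gamma$ by Proposition~\ref{prop:vanish} and the $C^*$-component of its index vanishes; (ii) $\coarseind_\Gamma(D_Y)=0$ because $Y$ is a closed manifold bounding the spin $\Gamma$-manifold $Z$, and the closed coarse $\Gamma$-index is a bordism invariant (it equals the image of the K-homology fundamental class $u_*[D_Y]$ under assembly, by Roe's theorem); and (iii) a gluing formula along the psc hypersurfaces $\pa W,\pa W'$ giving $\coarseind_\Gamma(D_Y)=\coarseind_\Gamma(D_W)+\coarseind_\Gamma(D_N)-\coarseind_\Gamma(D_{W'})$.

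\emph{Commutativity of the three squares.} For the leftmost square, I view $(M,f)\in\Omega^{\spin}_{n+1}(X)$ as an element of $R^{\spin}_{n+1}(\Gamma)$ with empty boundary; the required identity $\Ind_\Gamma(M,f)=\alpha_*\beta(M,f)$ is Roe's theorem \cite{roe-bc_coarse} quoted in \eqref{bc-remark-d}, equating the coarse $\Gamma$-index of a closed spin manifold with the Baum--Connes assembly of its fundamental K-homology class. The middle square is precisely Corollary~\ref{corollary-main}, pushed forward to $\tilde X$ by functoriality of the construction. For the rightmost square, I use the Paschke duality isomorphism $K_{n+1}(D^*(\tilde X)^\Gamma/C^*(\tilde X)^\Gamma)\cong K_n(X)$ and its analogue on $\tilde M$, under which the map $K_{n+1}(D^*(\tilde X)^\Gamma)\to K_n(X)$ appearing in \eqref{eq:StolzToAnaX} becomes the natural quotient map; by construction of the $\rho$-class, $\rho(\tilde g)\in K_{n+1}(D^*(\tilde M)^\Gamma)$ is a lift of $[D_M]\in K_n(M)$, so pushing it forward along $\tilde f$ and projecting to the quotient yields $f_*[D_M]=\beta(M,f)$.

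\emph{The main obstacle} is the gluing / additivity of the coarse APS index used in step~(iii) above. A clean approach is to realise $\coarseind(D_W)$ as the image of a relative index $\Ind^{\rm rel}(D_{W_\infty})\in K_n(\relC{W_\infty}{W}^\Gamma)$ (cf.\ Lemma~\ref{lemma:from-hry}) and then use Mayer--Vietoris sequences of Roe algebras together with Proposition~\ref{prop:vanish} (invertibility of $\chi(D)$ over the psc cylinders) to obtain additivity under the decomposition $Y=W\cup N\cup(-W')$. Once the $X$-version \eqref{eq:StolzToAnaX} is established, the universal version \eqref{eq:StolzToAna} follows by naturality with $X=B\Gamma$, $\tilde X=E\Gamma$.
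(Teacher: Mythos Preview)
Your overall architecture matches the paper's: define $\beta$, $\rho_\Gamma$, $\Ind_\Gamma$; establish bordism invariance of each; then check the three squares using (respectively) Roe's identification of the assembly map, the delocalized APS theorem (Corollary~\ref{corollary-main}), and the fact that $\rho(\tilde g)$ lifts the fundamental class in $D^*/C^*$. On these points you and the paper are in agreement.

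The one substantive divergence is in the well-definedness of $\Ind_\Gamma$. You propose to prove bordism invariance \emph{internally} to coarse index theory, via a gluing/additivity formula $\coarseind_\Gamma(D_Y)=\coarseind_\Gamma(D_W)+\coarseind_\Gamma(D_N)-\coarseind_\Gamma(D_{W'})$ derived from Mayer--Vietoris of Roe algebras and invertibility along the psc gluing hypersurfaces. The paper does \emph{not} carry this out: instead it invokes Proposition~\ref{prop:equality-aps} to identify $\Ind_\Gamma(D_W)$ with the Mishchenko--Fomenko APS index class, and then cites the established bordism invariance of the latter (Bunke's relative index theorem or the gluing formula of Leichtnam--Piazza). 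The paper explicitly remarks that a purely coarse-geometric proof along your lines ought to exist and is planned for future work, but it is not supplied here. So your route is more self-contained in spirit but leaves a genuine lemma unproved (the coarse gluing formula (iii) is only asserted, and extracting it cleanly from Mayer--Vietoris plus Proposition~\ref{prop:vanish} requires real work---one must control the boundary maps and show that the relative indices of the pieces assemble additively). The paper's route is less elegant but rests on results already in the literature, which is why Section~\ref{sec:index_technicalities} is included.
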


\begin{remark}
  As soon as the extension to arbitrary dimensions of our secondary index
  theorem   \ref{theo:k-theory-deloc} has been carried out (as indicated in
  Remark \ref{rem:odd_rem}), also Theorem \ref{theo:commute-r-pos} extends to
  the arbitrary dimensions.
\end{remark}

\begin{remark}
  In their seminal papers \cite{higson-roeI,higson-roeII,higson-roeIII}
  Higson and Roe carry out a  a program similar to the one developed here:
  they construct a map from 
  the surgery exact sequence in topology to exactly the same K-theory exact
  sequence 
  showing up in \eqref{eq:StolzToAna} (with $2$ inverted). Their construction
  is not quite as
  analytic as ours: it is not based on the index of the signature operator but
  rather on the manipulation of Poincar\'e duality complexes. In
  \cite{PiazzaSchick_surgery} we develop a direct analogue of our index
  theoretic 
  construction for the surgery exact sequence in topology. Notice that the
  analysis is more 
  difficult than the one developed here, given that the signature operator
  attached to an element of the structure set will not be
   invertible; similarly, the boundary-signature operator attached to an
   element of the
   L-groups of the fundamental group will not be invertible. One can use
  the homotopy equivalences built in the definition of the L-groups and  the structure set
   in order to obtain a smoothing perturbation
  which makes the signature operator invertible, as in \cite{PS1}. The issue
  is then to extend the constructions of index and rho-classes and the proofs
  of the secondary index theorems to this more general class of Dirac-type
  operators with smoothing perturbation (making the sum invertible).

  We have completed this program in \cite{PiazzaSchick_surgery},
  reproving the main results of
  \cite{higson-roeI,higson-roeII,higson-roeIII} with purely operator theoretic
  methods. The corresponding general index theorems should be useful in other
  contexts, as well.
\end{remark}

\begin{remark}
  Beyond the extension of the method to the surgery exact sequence, a second
  goal for future work is to continue and map further from the K-theory exact
  sequence of \eqref{eq:StolzToAna} to a suitable exact sequence in cyclic
  (co)homology which should then allow to obtain systematically numerical
  higher invariants. To achieve this, one has to overcome further analytic
  difficulties as the algebra $D^*X$ is  too large to allow for easy
  constructions of (higher) traces on it. Higson and Roe \cite{higson-roe4}
  carry out a small part of this program. They check that the pairing with
  the trace coming from a virtual representation of dimension zero (which
  gives rise to the Atiyah-Patodi-Singer rho invariant) is compatible with the
  K-theory exact sequence. It turns out that their construction of the
  relevant map on $K_*(D^*X)$ is very delicate. In \cite{Wahl_higher_rho}
  Wahl
  extends this to some more refined invariants, but working directly with the
  surgery exact sequences and its specific properties and, more importantly,
   mapping directly
  to cyclic homology, or rather non-commutative de-Rham homology, 
  as it was done in \cite{LPPSC}
  for the Stolz sequence.
\end{remark}

We now describe the structure of the rest of the paper. In Section
\ref{sec:index_technicalities} we review several alternative and previously
used definitions of higher indices, in particular for manifolds with boundary,
and check that they coincide with the approach via coarse $C^*$-algebras
which we 
have described above (in the contexts where this makes sense). This puts
``coarse index theory'' in the context of usual index theory and allows us to
use a few known properties of indices (like bordism invariance and
gluing formulas) without having to prove them again in the coarse
setting. In Section \ref{sec:K-theory-homs} we will work out basic properties
of the K-theory of coarse $C^*$-algebras which we use in the course of the
proofs. Section \ref{sec:proofs} finally is devoted to the proofs of the main
Theorems \ref{theo:part_mf_abst} and
\ref{theo:commute-r-pos}, implementing the program set out above.

\begin{remark}
  After the first publication of the present paper in the arXiv,
   Xie and Yu, in the preprint \cite{Xie-Yu},
  treated the problem with a different method. They use Yu's localization
  algebras and an exterior product structure between K-homology and the
  analytic structure group to reduce the proof of the main result of this paper
  to the known
  behavior of the K-homology fundamental class under the Mayer-Vietoris
  boundary map. Where for us the main difficulty lies in the explicit index
  calculation in the model situation, for them the main difficulty is the
  explicit calculation of certain exterior products which uses the full force
  of KK-theory.
 Their method does cover even and odd dimensions at the same time.

  Moreover, also Paul Siegel announced a proof of the general case, along
  similar lines as Xie and Yu. In his Ph.D. thesis, he develops a new model for
  K-homology and the structure set and develops an exterior product between
  those, and calculates the exterior product between a rho-class and a
  fundamental class. Paul Siegel has announced that he  proved
 the compatibility
  between exterior product and Mayer-Vietoris. Again, this would lead to a
  proof of our main theorems in all dimensions, and would generalize to real
  K-theory. 
\end{remark}

\medskip
\noindent
{\bf Acknowledgments.}  We thank John Roe for useful discussions in an
early stage of this project and also for very useful comments on several
versions of the manuscript.  Thomas Schick thanks  Sapienza Universit\`a 
di Roma for a three-months guest professorship during which the most important
steps of this project could be carried out. Paolo Piazza thanks the
Mathematische Institut and the Courant Center ``Higher order structures in
mathematics'' for their hospitality for several week-long visits to
G\"ottingen.

\section{Coarse, $b$- and APS index classes}
\label{sec:index_technicalities}

The goal of this section is to give alternative descriptions of the relative coarse index class
  $\Ind^{{\rm rel}}(D_{W_\infty}) \in K_{n} (\relC{W_\infty}{W}^\Gamma)$, $n=\dim W$,
connecting it with classes that have already been defined in the literature. We also explain
why we look at our index theorem as a delocalized APS index theorem.
\subsection{Index classes in the closed case}
First of all we  tackle the
analogous problem in the boundaryless case. Thus, let $V$ be a complete
spin\footnote{our arguments actually apply to any Dirac-type operator} Riemannian 
manifold with a free, isometric, cocompact spin structure preserving action of
$\Gamma$. We denote the quotient $V/\Gamma$,
a compact spin manifold without boundary,
by $Z$; we thus get a Galois $\Gamma$-covering $ V\xrightarrow{\pi} Z$. 
We denote the spinor bundles on $V$ and $Z$ by $S_V$ and $S_Z$ respectively; $S_V$ is $\Gamma$-equivariant and $S_Z$ is obtained from $S_V$ by passing to the quotient.
There are five $C^*$-algebras which we consider:
\begin{itemize}
\item $C^* (V)^\Gamma$, the Roe algebra we have defined  in subsection \ref{subsect:ind-and-rho};
\item the $C^*$-algebra $C^* (G)$ defined by the groupoid $G$ associated to the 
$\Gamma$-covering $\Gamma -  V\to Z$; this is the groupoid with set of arrows $V\times_\Gamma V$,
units $Z$ and source and range maps defined by $s[v,v^\prime]=\pi (v^\prime)$ and
$r[v,v^\prime]=\pi (v)$. We also consider the Morita equivalent $C^*$-algebra
$C^* (G,S_V)$,
which is defined by taking the closure of the algebra of smooth integral
kernels $C^\infty_c (G, (s^* S_Z)^*\otimes r^* S_Z)$;
\item the $C^*$-algebra of compact operators $\KK (\mathcal{E})$ of the $C^*_r \Gamma$-Hilbert
module $\mathcal{E}$ defined by taking the closure of the  pre-Hilbert $\CC\Gamma$-module 
$C^\infty_c (V,S_V)$;
\item the $C^*$-algebra of compact operators $\KK (\mathcal{E}_{{\rm MF}})$ of the Mishchenko-Fomenko
$C^*_r \Gamma$-Hilbert
module $\mathcal{E}_{{\rm MF}}$ which is, by definition, $L^2 (Z, S_Z\otimes \mathcal{V}_{{\rm MF}})$; here
 $\mathcal{V}_{{\rm MF}}$ denotes the Mishchenko bundle: $\mathcal{V}_{{\rm MF}}
 := V\times_\Gamma C^*_r \Gamma$. We can also consider $\mathcal{E}^1_{{\rm MF}}:= H^1 (Z, S_Z\otimes \mathcal{V}_{{\rm MF}})$, the first Sobolev $C^*_r \Gamma$-module. 
 \item the reduced $C^*$-algebra of the group $\Gamma$: $C^*_r \Gamma$.
 \end{itemize}
 The relationships between these $C^*$-algebras are as follows:
 \begin{equation}\label{c-star-iso}
 C^* (V)^\Gamma=C^* (G,S)\simeq \KK (\mathcal{E})\simeq \KK (\mathcal{E}_{{\rm MF}})\simeq \K \otimes 
 C^*_r \Gamma.
 \end{equation}
 where all the isomorphisms are canonical.
 The first equality, with $L^2 (V, S_V)$ chosen as Hilbert $C_0 (V)$-module for $ C^* (V)^\Gamma$,
  follows from the inclusion $$C^\infty_c (G, (s^* S_Z)^*\otimes r^* S_Z)\subset C_c (V)^\Gamma\,.$$
  The second isomorphism is a special case of
  the corresponding result for foliated bundles in \cite{mn}, and was
  well known before. The third
  isomorphism is induced by a canonical isomorphism of $\CC\Gamma$-modules,
$\psi\colon C^\infty_c (V,S_V)\to C^\infty_c (V,S_V\otimes \CC\Gamma)^\Gamma\equiv C^\infty (Z,S_Z\otimes 
\mathcal{V}_{{\rm alg}})$ with $\mathcal{V}_{{\rm alg}}
 := V\times_\Gamma \CC\Gamma$, see \cite[Proposition 5]{LottI} (it will
 suffice to replace $\mathcal{B}^\omega$ there with $\CC\Gamma$).
 The last isomorphism is a consequence of the fact that 
 $L^2 (Z, S_Z\otimes \mathcal{V}_{{\rm MF}})$ is isomorphic to the standard $C^*_r$-Hilbert module 
 $\mathcal{H}_{C^*_r \Gamma}$.
 
 From   \eqref{c-star-iso} we obtain
  \begin{equation}\label{c-star-k-iso}
  \begin{split}
K_*( C^* (V)^\Gamma)=K_*(C^* (G,S))\simeq K_* (\KK (\mathcal{E}))\simeq K_*(\KK (\mathcal{E}_{{\rm MF}}))\\\simeq K_*(\K \otimes 
 C^*_r \Gamma)\simeq K_* (C^*_r \Gamma)
 \end{split}
 \end{equation}
 where all the isomorphisms are canonical.
 
The K-theory of these $C^*$-algebras are the home of different equivalent definition of the index
class associated to the Dirac operator $D_V$. Let us recall these definitions
in  the even dimensional case:

\begin{itemize}
\item the coarse index class $\Ind^{{\rm coarse}} (D_V)\in K_0 (C^*
  (V)^\Gamma)$ of Definition \ref{def:coarse_ind};
\item the Connes-Skandalis index class $\Ind^{{\rm CS}} (D_V)\in K_0 (C^* (G,S_V))$ defined via the Connes-Skandalis
projector associated to a parametrix $Q$ for $D_V$. Thus, we first choose $Q$,
a $\Gamma$-compactly
supported pseudodifferential operator of order $(-1)$ so that
\begin{equation}\label{simbpar}
Q D^+_V =\Id-S_{+}\,,\quad\quad D^+_V Q=\Id - S_{-}
\end{equation}
with remainders $S_{-}$ and $S_{+}$ that are in 
$C^\infty_c (G, (s^*S_Z^\pm)^*\otimes r^* S_Z^\pm)$; then  we consider 
$\Ind^{{\rm CS}} (D_V):= [P_Q]-[e_1] \in K_0 (C^* (G,S_V))$, with 
\begin{equation}\label{e_Q}
P_Q:= \left(\begin{array}{cc} S_{+}^2 & S_{+}  (I+S_{+}) Q\\ S_{-} D^+_V &
I-S_{-}^2 \end{array} \right);\qquad e_1:=\left( \begin{array}{cc} 0 & 0 \\ 0&1
\end{array} \right).
\end{equation}
Here we have in fact defined the compactly supported index class $\Ind^{{\rm CS}}_c (D_V)$
$\in$ $K_0 (C^\infty_c (G, (s^*S_Z)^*\otimes r^* S_Z))$; the $C^*$-index class
is the image of this
class under the K-theory homomorphism induced by the inclusion
$C^\infty_c (G, (s^*S_Z)^*\otimes r^* S_Z)\hookrightarrow C^* (G,S_V)$. There are other 
equivalent descriptions of this  $C^*$-index class, such as the one defined by the
 Wassermann projector \cite[p.~356]{CM} or the graph
projector \cite[Section 8]{mn}; these are obtained from parametrices that are
of Sobolev order (-1) but are not
compactly supported.
\item the analogous  index class $\Ind^{{\rm CS}}_{{\rm MF}} (D_V)\in 
K_0 (\KK (\E_{{\rm MF}}))$, defined via a Mishchenko-Fomenko parametrix $\Q$ for $\D^+$,
with $\D^+$ equal to
the
Dirac operator on $Z$ twisted by $\mathcal{V}_{{\rm MF}}$;
\item the Mishchenko-Fomenko index class $\Ind_{{\rm MF}}( D_V):=[L_+] - [L_-]\in K_0 (C^*_r \Gamma)$,
defined via a Mishchenko-Fomenko decomposition induced by $\D^+$ on  $\E^1_{{\rm MF}}$ and 
$\E_{{\rm MF}}$;
\end{itemize}
 
 \begin{proposition}\label{prop:equality-index-closed}
 Up to canonical $K$-theory isomorphisms
 we have:
 \begin{equation}\label{equality-index}
 \Ind^{{\rm coarse}} (D_V)=\Ind^{{\rm CS}} (D_V)=\Ind^{{\rm CS}}_{{\rm MF}} (D_V)=
 \Ind_{{\rm MF}}( D_V)\,.
 \end{equation}
 \end{proposition}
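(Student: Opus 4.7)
The plan is to establish the four equalities from right to left, handling the two ``algebraic'' identifications first and concentrating the work on the equality $\Ind^{{\rm coarse}}(D_V) = \Ind^{{\rm CS}}(D_V)$, which contains the real content.

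The identification $\Ind^{{\rm CS}}_{{\rm MF}}(D_V) = \Ind_{{\rm MF}}(D_V)$ in $K_0(\KK(\E_{{\rm MF}}))\simeq K_0(C^*_r \Gamma)$ is the classical comparison of the parametrix-projector and Fredholm-decomposition approaches to the Mishchenko-Fomenko index; a standard straight-line homotopy works since the two projectors arise from the same underlying operator $\D^+$ and differ by a $\KK(\E_{{\rm MF}})$-valued perturbation. Similarly, $\Ind^{{\rm CS}}(D_V) = \Ind^{{\rm CS}}_{{\rm MF}}(D_V)$ follows because the canonical Morita equivalence $C^*(G,S_V)\simeq \KK(\E_{{\rm MF}})$ recalled in \eqref{c-star-iso} sends a $\Gamma$-invariant parametrix $Q$ for $D_V$ to a Mishchenko-Fomenko parametrix for the twisted Dirac $\D^+$ on $Z$, and the Connes-Skandalis projector formula \eqref{e_Q} is preserved by this descent. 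Both identifications appear in the literature (work of Lott and of Moriyoshi-Natsume) and require only a routine verification.

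For the main equality $\Ind^{{\rm coarse}}(D_V) = \Ind^{{\rm CS}}(D_V)$, my strategy is to factor both constructions through equivariant K-homology. Paschke-Higson-Roe duality, valid because the $\Gamma$-action on $V$ is free and cocompact, provides an isomorphism $K_{*+1}(D^*(V)^\Gamma/C^*(V)^\Gamma) \iso K^\Gamma_*(V)$ under which the functional-calculus class $[D_V]$ of Section \ref{subsect:ind-and-rho} corresponds to the usual Kasparov K-homology class of the Dirac operator. Combined with the canonical isomorphism $K_*(C^*(V)^\Gamma) \iso K_*(C^*_r \Gamma)$, the boundary map $\partial$ of the extension $0\to C^*(V)^\Gamma\to D^*(V)^\Gamma \to D^*(V)^\Gamma/C^*(V)^\Gamma\to 0$ becomes the Baum-Connes assembly map; this is the content of Roe's coarse formulation of assembly in \cite{roe-cbms}. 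Hence $\Ind^{{\rm coarse}}(D_V)$ equals the image under assembly of the Dirac K-homology class, which classically coincides with $\Ind_{{\rm MF}}(D_V)$, and therefore with $\Ind^{{\rm CS}}(D_V)$ by what was established above.

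The main obstacle is the careful bookkeeping in the previous paragraph: each of the three ingredients (Paschke duality, the identification of $\partial$ with assembly, and Kasparov's parametrix realization of the assembly map) is available in the literature, but their combined compatibility, in particular the matching of signs and gradings in the equivariant setting, must be verified with care. An alternative, more hands-on route would be to construct an explicit homotopy in matrices over $C^*(V)^\Gamma$ connecting the Connes-Skandalis projector $P_Q$ to $\tfrac{1}{2}(\chi(D_V)+1)$, obtained after compressing the boundary-map construction, exploiting that both $Q$ and the functional-calculus inverse of $D_V$ furnish quasi-inverses modulo $C^*(V)^\Gamma$. Either route yields the same equality of K-theory classes.
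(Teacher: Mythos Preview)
Your proposal is correct, but your primary route for the key equality $\Ind^{{\rm coarse}}(D_V)=\Ind^{{\rm CS}}(D_V)$ is genuinely different from the paper's. The paper in fact takes what you call the ``alternative, more hands-on route'': it chooses the specific chopping function $\chi(x)=x/\sqrt{1+x^2}$, so that $\chi(D_V)$ is literally the bounded transform $A=(1+D_V^2)^{-1/2}D_V$, and then computes the boundary map $\partial[U^*A^+]$ explicitly using the lift $BU$ with $B=(1+D_V^2)^{1/2}Q$. The resulting projector $\Pi_B$ is not merely homotopic but \emph{conjugate} to the Connes--Skandalis projector $P_B$ via the diagonal unitary $\left(\begin{smallmatrix}1&0\\0&U\end{smallmatrix}\right)$, after one first homotopes $P_Q$ to $P_B$ along $(1+sD_V^2)^{\pm 1/2}$, $s\in[0,1]$.

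Your K-homology/assembly route is valid but buys less here: it imports Paschke duality, Roe's identification of $\partial$ with assembly (the relevant reference is \cite{roe-bc_coarse} rather than \cite{roe-cbms}), and Kasparov's descent, with the sign/grading bookkeeping you rightly flag. The paper's direct computation is self-contained and, more importantly, is designed to be recycled verbatim in the boundary case (Proposition~\ref{prop:equality-aps}), where one simply replaces $Q$ by a $b$-parametrix; your high-road argument would instead require a relative/APS version of Paschke duality and of the assembly identification, which is considerably more to check.
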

 
  \begin{proof}
 The equality
 $\Ind^{{\rm CS}} (D_V)=\Ind^{{\rm CS}}_{{\rm MF}} (D_V)$ follows from 
 the second and third isomorphism in \eqref{c-star-iso}. A detailed proof appears in 
 \cite[Lemma 6.1]{CM}, where
  it is actually proved that $\Ind^{{\rm CS}}_{{\rm MF}} (D_V)$ is equal to the image of the 
  compactly supported  index class
  $\Ind^{{\rm CS}}_c (D_V)$ under the K-theory homomorphism induced by the inclusion
  $C^\infty_c (G, (s^*S_Z)^*\otimes r^* S_Z)\hookrightarrow C^* (G,S_V)$.\\
  For the other equalities we make a preliminary remark. It is clear that the
  Connes-Skandalis 
 index class of $D_V$ is equal to the Connes-Skandalis  index
 class of the 
 bounded transform of 
 $D_V$. This is true by general principles but can also be checked
 directly: consider 
 $A=  (1+D^2_V)^{-1/2} D_V$ and $B= (1+D_V^- D_V^+)^{1/2} Q$, which will be written
 shortly as $(1+D^2_V)^{1/2} Q$. Then $A^+ B= \Id-R_-$
 and $BA^+ = \Id-R_+$, with $R_-=S_-$ and $R_+= (1+D^2_V)^{1/2} S_+ (1+D^2_V)^{-1/2}$. 
 We can now write the Connes-Skandalis projector associated to $A^+$, $B$ and $R_\pm$
 and we call it $P_B$.
 This is homotopic  to the Connes-Skandalis projector \eqref{e_Q} (just consider 
 $(1+s D^2_V)^{\pm 1/2}$, with $s\in [0,1]$, throughout). Moreover, the K-theory class defined
 by $P_B$ is nothing but $\partial [A^+]$,
 the index class associated to $A^+$ via the short exact sequence
 $0\to \KK (\E)\to \BB (\E) \to \BB (\E)/\KK( \E)\to 0$. Now, the same remark applies
 to $\Ind^{{\rm CS}}_{{\rm MF}} (D_V)$ but for the isomorphic 
 short exact sequence $0\to \KK (\mathcal{E}_{{\rm MF}})\to \BB (\mathcal{E}_{{\rm MF}}) \to 
 \BB (\mathcal{E}_{{\rm MF}})/\KK( \mathcal{E}_{{\rm MF}})\to 0$.
We can now invoke the results in \cite[Section 17]{WO}, stating the equality of the latter index with the
Mishchenko-Fomenko index of $ (1+\D^2)^{-1/2} \D^+$. Using the Mishchenko-Fomenko
calculus the latter is easily seen to be the same as the Mishchenko-Fomenko
index of $\D^+$,  see for example \cite[Theorem 6.22]{Schick-ell2} for the
details. 

 Summarizing: we have also proved that 
$\Ind^{{\rm CS}}_{{\rm MF}} (D_V)=\Ind_{{\rm MF}} (D_V)$. It remains to show that  $\Ind^{{\rm coarse}} (D_V)=\Ind^{{\rm CS}} (D_V)$. To this end we start with the expression of the Connes-Skandalis index class
in terms of $A^+$, $B$, $R_\pm$: 
$$\Ind^{{\rm CS}} (D_V) = [P_B]-[e_1]\quad\text{with}\quad
 P_B=\left(\begin{array}{cc} R_{+}^2 & R_{+}  (I+R_{+}) B\\ R_{-} A^+ &
I-R_{-}^2 \end{array} \right).$$ Consider now the function $\chi (x):= x/\sqrt{1+x^2}$; this
is a chopping function (so, it can be used to define the coarse index)
 and $\chi (D_V)=A$. The inverse of $U^* A^+$ in 
$D^* (V)^\Gamma / C^* (V)^\Gamma$ can be represented by $B U\in D^*(V)^\Gamma$:
we observe that  
$$ (B U)(U^* A^+)= \Id - R_+\,; \quad (U^* A^+) (B U)= U^* (\Id - R_-) U\;.$$ We now write the 
expression of $\partial (U^* A^+)$ in terms of these choices and we get
$$ \Ind^{{\rm coarse}} (D_V)=[\Pi_B]- [e_1]\quad\text{with}\quad
 \Pi_B=\left(\begin{array}{cc} R_{+}^2 & R_{+}  (I+R_{+}) B U\\ U^* R_{-} A^+ &
U^* (I-R_{-}^2)U \end{array} \right).$$
Observing now that 
$$\Pi_B=  \left( \begin{array}{cc} 1 & 0 \\ 0&U^*
\end{array} \right) P_B \left( \begin{array}{cc} 1 & 0 \\ 0&U
\end{array} \right)$$
we conclude that $ \Ind^{{\rm coarse}} (D_V)= \Ind^{{\rm CS}} (D_V)$, as required.
  \end{proof}

\subsection{Index classes on manifolds with boundary}
We now pass to manifolds with boundary
and we adopt the notation explained in the geometric set-up
\ref{geometric-set-up}.
We remark that the complete Riemannian spin manifold $(W_\infty, g)$
 is in a natural way a $\Gamma$-covering of $Y_\infty$, the manifold with cylindrical 
end associated to $Y$. We consider the groupoid
$G_\infty:=W_\infty\times_\Gamma W_\infty$ 
and the associated  $C^*$-algebra $C^* (G_\infty)$, obtained 
by taking the closure of $C^\infty_c (G_\infty)$. Similarly, we can consider $C^* ( G_\infty,S)$
with $S$ the spinor bundle of $W_\infty$.
Then, as in \eqref{c-star-iso} 
\begin{equation}\label{equality-of-cstar}
\relC{W_\infty}{W}^\Gamma =C^* ( G_\infty,S)
\end{equation}

\begin{proposition}
  Assume that $W$ has dimension $n=2\ell$
  and that the boundary has positive scalar curvature.  Then, using a 
  $b$-parametrix, there is a well defined  
  $b$-index class associated to the spin Dirac operator on $W_\infty$:
 
  \begin{equation*}
    \Ind^b (D_{W_\infty})\in K_{0}(\relC{W_\infty}{W}^\Gamma ).
  \end{equation*}
    \end{proposition}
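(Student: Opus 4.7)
The plan is to build the index class by imitating the Connes--Skandalis construction from the closed case (Proposition \ref{prop:equality-index-closed}), only now using a \emph{$b$-parametrix} in Melrose's $b$-calculus rather than a compactly supported one, and then to check that the resulting remainders land in $\relC{W_\infty}{W}^\Gamma = C^*(G_\infty,S)$ via the identification \eqref{equality-of-cstar}.

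First, I would observe that the indicial family of $D_{W_\infty}$ at the cylindrical end equals $D_{\partial W}+i\sigma$ for $\sigma\in\reals$. The Lichnerowicz identity on $\partial W$ combined with the uniform positive scalar curvature assumption gives $D_{\partial W}^2 \geq c>0$, so $D_{\partial W}$ is an invertible self-adjoint operator and the indicial family is invertible for every $\sigma\in\reals$. This is precisely the hypothesis needed in the $b$-calculus to construct a fully elliptic parametrix: there exists a $\Gamma$-equivariant $b$-pseudodifferential operator $Q_b$ of order $-1$ with
\[
Q_b D^+_{W_\infty} = \Id - S_+, \qquad D^+_{W_\infty} Q_b = \Id - S_-,
\]
where $S_\pm$ lie in the \emph{residual} $b$-calculus, i.e.\ they are smoothing operators whose Schwartz kernels decay rapidly away from the diagonal and rapidly at the cylindrical end of $W_\infty\times_\Gamma W_\infty$.

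Second, I would identify these remainders as elements of $C^*(G_\infty,S)$ and hence, by \eqref{equality-of-cstar}, of $\relC{W_\infty}{W}^\Gamma$. The two conditions to check are: (i) locally compact, and (ii) supported within a bounded neighbourhood of $W\subset W_\infty$ after closure. Condition (i) follows because the kernels of $S_\pm$ are smoothing on a $\Gamma$-cocompact manifold, so $S_\pm\phi$ and $\phi S_\pm$ are compact for $\phi\in C_c(W_\infty)$. Condition (ii) follows from the rapid decay of the residual kernels at the cylindrical face: after norm-approximation the operators can be cut off to a bounded neighbourhood of $W$ with arbitrarily small error, putting them in the ideal $\relC{W_\infty}{W}^\Gamma$ rather than in the larger Roe algebra $C^*(W_\infty)^\Gamma$.

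With $Q_b, S_\pm$ in hand, I would form the Connes--Skandalis projector
\[
P_{Q_b} := \begin{pmatrix} S_+^2 & S_+(I+S_+)Q_b \\ S_- D^+_{W_\infty} & I - S_-^2 \end{pmatrix}
\]
exactly as in \eqref{e_Q} and set
\[
\Ind^b(D_{W_\infty}) := [P_{Q_b}] - [e_1] \in K_0(\relC{W_\infty}{W}^\Gamma).
\]
A standard homotopy argument (interpolating between two $b$-parametrices) shows this class is independent of the choice of $Q_b$. The main obstacle is the second step: verifying that the residual $b$-calculus remainders genuinely sit inside the coarse ideal $\relC{W_\infty}{W}^\Gamma$, since the $b$-calculus measures decay with respect to the $b$-geometry at the front face of the $b$-stretched product, while $\relC{W_\infty}{W}^\Gamma$ is defined by coarse support conditions relative to the Riemannian metric on $W_\infty$. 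Matching these two notions of ``decay at infinity'' requires a careful comparison of the radial coordinate on the cylinder with the defining function of the front face, together with the observation that exponential decay of the residual kernels dominates any polynomial mismatch.
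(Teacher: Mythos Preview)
Your approach is essentially the same as the paper's: define the class via the Connes--Skandalis projector built from a $b$-parametrix, and verify that the residual remainders lie in $C^*(G_\infty,S)=\relC{W_\infty}{W}^\Gamma$. The paper's proof is terser---it simply cites \cite{mor-pia} for both the parametrix construction and the fact that its remainders belong to $C^*(G_\infty,S)$, then invokes \eqref{equality-of-cstar}---whereas you unpack the mechanism (invertibility of the indicial family via Lichnerowicz, the structure of residual $b$-operators, and the matching of $b$-decay with coarse support conditions). Your identification of the ``main obstacle'' is precisely the content of what \cite{mor-pia} establishes, so the two arguments align.
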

    
  \begin{proof}
  This class is, by definition, the 
  Connes-Skandalis projector associated to a suitable parametrix.
  We take the parametrix construction explained in \cite{mor-pia}
  (which is directly inspired by the $b$-parametrix construction of Melrose 
  \cite{Melrose}). The parametrix 
  in \cite{mor-pia} is explicitly proved to have remainders in $C^* ( G_\infty,S)$, thus, by
  \eqref{equality-of-cstar}, in $\relC{W_\infty}{W}^\Gamma$.
  \end{proof}

Together with this $b$-index class we consider other K-theory classes:

\begin{itemize}
\item the Mishchenko-Fomenko $b$-index class $\Ind^b_{{\rm MF}} (D_{W_\infty})\in K_0 (C^*_r \Gamma)$, obtained from a Mishchenko-Fomenko
decomposition theorem induced by a $b$-parametrix in the $b$-Mishchenko-Fomenko calculus on $Y_\infty$,
see \cite{LPMEMOIRS} and the Appendix of  \cite{LPBSMF};
\item the   index class $\Ind^{{\rm APS}}_{{\rm MF}} (D_W)\in K_0 (C^*_r \Gamma)$
defined by a Mishchenko-Fomenko boundary value problem on $Y$ \`a la Atiyah-Patodi-Singer, see \cite{WuI};
\item the   conic index class $\Ind^{{\rm conic}}_{{\rm MF}} (D_W)\in K_0 (C^*_r \Gamma)$
defined by a Mishchenko-Fomenko conic parametrix, see \cite{LPBSMF}.
\end{itemize}

\begin{remark}
Each one of these index classes has interesting features: $\Ind^b_{{\rm MF}} (D_{W_\infty})$
is the most suitable for proving higher index formulas (see the next subsection); the conic
index class displays the most interesting stability properties (see \cite{LLP}, where these two
properties
are used in order to define higher signatures on manifolds with boundary and proving,
under additional assumptions on $\Gamma$, their homotopy invariance). The APS index class,
on the other hand,
makes the study of gluing and cut-and-paste problems particularly easy.
\end{remark}


\begin{proposition}\label{prop:equality-aps}
Up to natural K-theory isomorphisms
  the following equalities hold:
  \begin{equation*}
    \Ind^{{\rm rel}}(D_{W_\infty})= \Ind^{b}(D_{W_\infty})  = 
    \Ind^b_{{\rm MF}} (D_{W_\infty})=\Ind^{{\rm APS}}_{{\rm MF}}
    (D_W)=\Ind^{{\rm conic}}_{{\rm MF}} (D_W) .
  \end{equation*}
\end{proposition}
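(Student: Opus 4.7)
The strategy is to adapt the proof of Proposition \ref{prop:equality-index-closed} to the manifold--with--boundary setting, using the identification \eqref{equality-of-cstar} between the relative coarse algebra and the groupoid algebra, and then to invoke known comparison results from the literature for the various Mishchenko--Fomenko versions. First I would reduce the whole chain to two truly analytical comparisons: (a) $\Ind^{{\rm rel}}(D_{W_\infty}) = \Ind^{b}(D_{W_\infty})$ inside $K_0(\relC{W_\infty}{W}^\Gamma)$, and (b) $\Ind^{b}(D_{W_\infty}) = \Ind^{b}_{{\rm MF}}(D_{W_\infty})$ after applying the canonical isomorphism $K_0(C^*(G_\infty,S)) \iso K_0(C^*_r\Gamma)$ induced by the Morita chain $C^*(G_\infty,S)\simeq \KK(\E_{{\rm MF},\infty})\simeq \K\otimes C^*_r\Gamma$.

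For (a) I would mimic the last step of the closed--case argument literally. Pick the chopping function $\chi(x)=x/\sqrt{1+x^2}$, so that $A:=\chi(D_{W_\infty})$ is the bounded transform and $\chi(D_{W_\infty})\in D^*(W_\infty)^\Gamma$ by finite propagation. Proposition~\ref{prop:vanish} (applied to $Y=W$, using the uniform positivity of the scalar curvature on the cylindrical end $M\times[0,\infty)$) makes $A$ an involution modulo $\relC{W_\infty}{W}^\Gamma$, so $[D_{W_\infty}]$ and its boundary $\Ind^{{\rm rel}}(D_{W_\infty})$ are well defined. On the other hand, the $b$-parametrix $Q$ of \cite{mor-pia} has remainders $S_\pm \in C^*(G_\infty,S) = \relC{W_\infty}{W}^\Gamma$. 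Setting $B=(1+D_{W_\infty}^2)^{1/2}Q$ and $R_\pm$ as in the proof of Proposition~\ref{prop:equality-index-closed} gives, by exactly the same $2\times 2$ matrix identity, a Connes--Skandalis projector $P_B$ that is homotopic to the standard $b$-Connes--Skandalis projector and differs from the projector $\Pi_B$ representing $\partial[D_{W_\infty}]$ only by a conjugation by the bundle isometry $U$. Hence $\Ind^{{\rm rel}}(D_{W_\infty})=\Ind^{b}(D_{W_\infty})$.

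For (b) I would use that the isomorphism $C^*(G_\infty,S)\simeq \KK(\E_{{\rm MF},\infty})$ is induced by the $\CC\Gamma$-linear map $\psi$ of \cite[Proposition 5]{LottI} which sends a $\Gamma$-invariant kernel on $W_\infty\times W_\infty$ to its Mishchenko--Fomenko avatar on $Y_\infty$. Under $\psi$, a $b$-parametrix in the groupoid picture is sent to a $b$-parametrix in the $b$-Mishchenko--Fomenko calculus of \cite{LPMEMOIRS}, and the Connes--Skandalis projectors are simply transported to each other. This forces $\Ind^b(D_{W_\infty}) = \Ind^{b}_{{\rm MF}}(D_{W_\infty})$. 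The remaining two equalities $\Ind^{b}_{{\rm MF}}(D_{W_\infty}) = \Ind^{{\rm APS}}_{{\rm MF}}(D_W) = \Ind^{{\rm conic}}_{{\rm MF}}(D_W)$ I would quote from the literature: the first is proved e.g.\ by Leichtnam--Piazza and Wu (it says that a $b$-parametrix on the cylindrical end and a generalized APS boundary value problem on $Y$ define the same class, a statement that relies on the invertibility of the boundary operator, which is guaranteed here by the positive scalar curvature on $M$), and the second is the comparison between conic and $b$/APS index classes in \cite{LPBSMF}.

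The main obstacle is step (a): keeping careful track of the fact that the chopping function $\chi$ produces remainders in $\relC{W_\infty}{W}^\Gamma$ (not in all of $C^*(W_\infty)^\Gamma$), and that the $b$-parametrix remainders land in the \emph{same} ideal, so that the algebraic identity relating $\Pi_B$ and $P_B$ takes place in a single K-theory group. Once this is done, the groupoid/Mishchenko--Fomenko translation and the cited results in the $b$- and APS-calculus chain everything together.
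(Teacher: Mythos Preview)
Your proposal is correct and follows essentially the same route as the paper. The paper likewise reduces the chain to (a) the comparison $\Ind^{{\rm rel}}=\Ind^{b}$, handled exactly as you describe by replacing the abstract inverse of $U^*\chi(D)_+$ in the quotient by $BU$ with $B=(1+D^2)^{1/2}Q$ for a $b$-parametrix $Q$ and then repeating the Connes--Skandalis/boundary-map manipulation from the closed case; (b) the identification $\Ind^{b}=\Ind^{b}_{{\rm MF}}$, which the paper dispatches by saying ``as in the closed case (hence, using the results in \cite{WO})''; and the remaining two equalities, which the paper simply quotes from \cite{LLP}. The only minor point where you are slightly less explicit than the paper is in (b): after transporting the Connes--Skandalis projector via $\psi$ you still need the \cite{WO}-type step that identifies the Connes--Skandalis class in $K_0(\KK(\E_{{\rm MF},\infty}))$ with the class obtained from the Mishchenko--Fomenko decomposition theorem, since that is how $\Ind^{b}_{{\rm MF}}$ is defined.
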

\begin{proof}
The equalities $ \Ind^b_{{\rm MF}} (D_{W_\infty})=\Ind^{{\rm APS}}_{{\rm MF}} (D_W)=\Ind^{{\rm conic}}_{{\rm MF}} (D_W)$ are proved in \cite{LLP} (the case treated here, with full invertibility
of the boundary operator, is actually simpler than the one discussed in \cite{LLP}). The proof 
that $ \Ind^b (D_{W_\infty})= \Ind^b_{{\rm MF}} (D_{W_\infty})$ is as in the closed case (hence,
using the results in \cite{WO}). Thus we only
need to prove the first equality: $ \Ind^{{\rm rel}}(D_{W_\infty})= \Ind^{b}(D_{W_\infty})$.
However, this follows once again from the reasoning given in the proof of
Proposition \ref{prop:equality-index-closed}. Indeed, the relative-coarse
index class is defined in terms
of $U^*\chi (D_V)_+$, with $\chi$ a chopping function equal to $\pm 1$
on the spectrum of the boundary
operator. We have claimed that $\chi (D)_- U$ is an inverse of $U^*\chi (D_V)_+$ modulo 
$\relC{W_\infty}{W}^\Gamma$; now, in order to define $\pa [U^*\chi (D_V)_+]$ we can choose
an arbitrary inverse in the quotient. Consider $BU$, with $B=(1+D_V^2)^{1/2} Q$ 
and $Q$ equal to
a $b$-parametrix for $D_V^+$ as in \cite{mor-pia}. It is easy to see, from the expression of
$Q$,  that $BU$ is indeed an element in $D^* (W_\infty)^\Gamma$; moreover, since $S_\pm$,
the remainders given by $Q$, 
are residual terms in the $b$-calculus, it follows that $R_\pm$ are also residual, so that
$BU$ is indeed an inverse of $U^*\chi (D_V)_+$ mod $C^* ( G_\infty,S)$,
i.e. mod $\relC{W_\infty}{W}^\Gamma$. The proof now proceeds as in the closed case.
\end{proof}

\subsection{Delocalizing}
The goal of this subsection 
is to explain why we look at our index formula
\eqref{main-index-eq}
 \begin{equation*}
   \iota_* (\coarseind_\Gamma (D_W))  =\rho_\Gamma(g_{\pa W}) \quad \in\quad K_0 (D^*_\Gamma)
 \end{equation*}
as an equality between a \emph{delocalized} part of the index class  and the
rho-class
of the boundary operator. This is not needed in the sequel, but meant to put
our considerations in the context of previously established index theorems.

 To this end we recall the higher APS-index formula proved in \cite[Theorem
 14.1]{LPMEMOIRS}. This is a formula 
for the Karoubi Chern character  of the index class $\Ind^b (D_{W_\infty})$,
an element 
in the non-commutative de Rham homology $\overline{H}_* (\mathcal{B}^\infty)$ ($\mathcal{B}^\infty$ is a suitable
dense holomorphically closed subalgebra of $C^*_r \Gamma$, for example the Connes-Moscovici algebra).
The formula reads
\begin{equation}\label{higher-aps} 
\Ch (\Ind^b_{{\rm MF}} (D_{W_\infty}))=[ \int_{Y} \widehat{A} (Y,\nabla^Y)\wedge\omega -\frac{1}{2} 
\widetilde{\eta}(D_{\pa W})]\;\;\text{in}\;\; \overline{H}_* (\mathcal{B}^\infty)
 \end{equation}
 where $Y=W/\Gamma$, $\omega$ is a certain bi-form in 
 $\Omega^* (Y)\otimes \Omega_* (\CC \Gamma)$
 and where $$\widetilde{\eta}(D_{\pa W})\in \overline{\Omega}_* (\mathcal{B}^\infty):=
 \Omega_* (\mathcal{B}^\infty)/\overline{[ \Omega_* (\mathcal{B}^\infty),  \Omega_* (\mathcal{B}^\infty)]}
 $$ is 
 Lott's  higher eta invariant of the boundary operator \cite{LottII}, an invariant which is well defined for any 
 $L^2$-invertible Dirac operator $D_V$ on the total space of a boundaryless Galois 
 $\Gamma$-covering $V$  with base $Z$.
  Assume now, for simplicity, that  $\Gamma$ is {\it  virtually nilpotent.} Under this  additional assumption, the complex
$\overline{\Omega}_*(\mathcal{B}^\infty)$ splits as the direct sum of
sub-complexes labeled by the conjugacy classes of
$\Gamma$. Write $<\Gamma>$ for the set of conjugacy classes. Thus
$\widetilde{\eta}(D_V)$ splits as a direct sum
$$\widetilde{\eta}(D_V)=\bigoplus_{<x>\in <\Gamma>}
\widetilde{\eta}_{<x>} (D_V).$$
The higher $\rho$-invariant of Lott \cite{LottII}
associated to $D_V$ is, by definition,
$$\widetilde{\rho}(D_V):=\bigoplus_{<x>\not=
<e>}\widetilde{\eta}_{<x>} (D_V)$$

As pointed out in \cite[page 222]{LottII}, the higher $\rho$-invariant lies
in fact in $\overline{H}_*(\mathcal{B}^\infty)$. Notice that
$\overline{H}_*(\mathcal{B}^\infty)$ splits as the direct sum
$\overline{H}_*(\mathcal{B}^\infty)=\overline{H}_{<e>,*}(\mathcal{B}^\infty)\oplus 
\overline{H}_*^{{\rm deloc}}(\mathcal{B}^\infty)$ with the first group on the right hand side associated to the
subcomplex of $\overline{\Omega}_*(\mathcal{B}^\infty)$ labeled by the
trivial conjugacy class $<e>$ and the second group associated to the
subcomplexes labeled by the non-trivial conjugacy classes;
$\overline{H}_*^{{\rm deloc}}(\mathcal{B}^\infty)$ is thus the {\it delocalized} part of 
$\overline{H}_*(\mathcal{B}^\infty)$.
Then
$\widetilde{\rho}(D_V)$ lies in $\overline{H}_*^{{\rm
    deloc}}(\mathcal{B}^\infty)$.
 We let 
$\pi^{{\rm deloc}}$ be the  natural projection map. Then 
on the basis of the higher APS index formula \eqref{higher-aps}  one proves 
easily that
\begin{equation}\label{deloc-homology}
 \pi^{{\rm deloc}} (\Ch\circ \Ind^b_{MF} (D_{W_{\infty}}))= -\frac{1}{2} \widetilde{\rho} (D_{\pa W})\,.
 \end{equation}
Thus we see that Lott's higher rho invariant corresponds to the delocalized part of the Chern character
of the index class. We regard 
our equation $ \iota_* (\coarseind_\Gamma (D_W))  =\rho_\Gamma(g_{\pa W})$  as
a sharpening in K-theory of \eqref{deloc-homology}.

Finally, it is proved in \cite{LPPSC}
that the higher rho-invariant induces a well-defined group homomorphism
\begin{equation}\label{rho-hat}
\widehat{\rho}:\Pos^{\spin}_n (B\Gamma)\to \overline{H}_*^{{\rm deloc}}(\mathcal{B}^\infty)
\end{equation}
with 
$\widehat{\rho} [Z, u\colon Z\to B\Gamma, g_Z]= -\frac{1}{2} \widetilde{\rho} (D_{V})$, $V=u^* E\Gamma$,
and $*={\rm odd}$
if $n=2\ell$ and $*={\rm even}$ is $n=2\ell + 1$.
Together with \eqref{deloc-homology}
this shows that the the following diagram is commutative
\begin{equation}\label{diagramLP}
  \begin{CD}
   R^{\spin}_{n+1}(B\Gamma) @>>>
    \Pos^{\spin}_n (B\Gamma) \\ 
     @VV{\Ch\circ \Ind_\Gamma}V  @VV{\widehat{\rho}}V\\
      \overline{H}_* (\mathcal{B}^\infty) @>>> \overline{H}_*^{{\rm deloc}}(\mathcal{B}^\infty)\\
    \end{CD}
\end{equation}
In future work we plan to tackle the problem of defining a group
homomorphism
$\Ch_{{\rm deloc}}\colon  K_* (D^*_\Gamma)\to  \overline{H}_*^{{\rm deloc}}(\mathcal{B}^\infty)$ so that the following
double diagram is commutative and the composition of the two vertical arrows
on the right hand side is precisely $\widehat{\rho}$ in  \eqref{rho-hat}:
\begin{equation*}
  \begin{CD}
   R^{\spin}_{n+1}(B\Gamma) @>>>
    \Pos^{\spin}_n (B\Gamma) \\ 
     @VV{ \Ind_\Gamma}V  @VV{\rho}V\\
     K_{n+1} ( C^*_r\Gamma) @>>> K_{n+1}((D^*_\Gamma))  \\
     @VV{ \Ch}V  @VV{\Ch_{{\rm deloc}}}V\\
      \overline{H}_* (\mathcal{B}^\infty) @>>> \overline{H}_*^{{\rm deloc}}(\mathcal{B}^\infty)\\
    \end{CD}
\end{equation*}
In fact, we hope to map the whole Higson-Roe surgery sequence to a
sequence in non-commutative de Rham homology.

\section{$K$-theory homomorphisms}
\label{sec:K-theory-homs}

 \subsection{Geometrically induced homomorphisms}
Let $W$, $W_\infty$, $M=\pa W$ be as in the Geometric set-up
\ref{geometric-set-up}.

In the following, with slight abuse of notation we will denote all inclusions
$C^*(X)\to D^*(X)$ (and their equivariant and relative versions) by $\iota$
and the induced map in K-theory by $\iota_*$.

\begin{proposition}\label{def:iota}
We have commutative diagrams
\begin{equation}\label{iota}
  \begin{CD}
     K_*(C^*(\pa W)^\Gamma) @>{\iota_*}>> K_*(D^*(\pa W)^\Gamma)\\
  @VV{j_*}V @VV{j_*}V\\
   K_*(C^*(W)^\Gamma) @>{\iota_*}>> K_*(D^*(W)^\Gamma)\\
     @V{c}V{\iso}V @V{c}V{\iso}V\\
    K_* (\relC{W_\infty}{W}^\Gamma) @>{\iota_*}>> K_*
    (\relD{W_\infty}{W}^\Gamma)\,,\\
  \end{CD}
\end{equation}
\begin{equation}\label{eq:defjs}
  \begin{CD}
    K_*(D^*(\boundary W)^\Gamma) @>>> K_*(D^*(W)^\Gamma)\\
    @V{\iso}V{j_\boundary}V @V{\iso}V{c}V\\
    K_*(\relD{[0,\infty)\times\boundary W}{\boundary W}^\Gamma) @>{j_+}>>
    K_*(\relD{W_\infty}{W}^\Gamma)
  \end{CD}
\end{equation}
with $j_+$ and $j_{\pa}$ induced by the natural inclusions.
\end{proposition}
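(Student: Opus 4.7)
The plan for proving Proposition \ref{def:iota} is to combine the naturality of the functors $C^*(-)^\Gamma$ and $D^*(-)^\Gamma$ with the tautological commutativity of the inclusion $\iota\colon C^*\hookrightarrow D^*$ at the algebra level, together with the canonical identifications from Lemma \ref{lemma:from-hry}. All maps involved have algebra-level representatives induced either by pushforward along continuous coarse $\Gamma$-maps (via Proposition \ref{prop:functoriality} and Definition \ref{def:cover-D*-sense}) or by honest $C^*$-algebra (sub-ideal) inclusions; once algebra-level commutativity is observed, passage to $K$-theory is automatic, with canonicality of the induced maps guaranteed by Proposition \ref{prop:functoriality}.

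For diagram \eqref{iota}, the key observation for the top square is that the inclusion $\pa W\hookrightarrow W$ is a $\Gamma$-equivariant continuous coarse map. I would choose an isometry $V\colon H_{\pa W}\to H_W$ covering it in the $D^*$-sense (which exists after absorbing the $\ell^2(\naturals)$-factor already built into the definition of $D^*$). Then a single operator $\Ad(V)$ simultaneously represents $j_*$ on $D^*(\pa W)^\Gamma$ and its restriction represents $j_*$ on $C^*(\pa W)^\Gamma$. Since $\iota$ is a set-theoretic inclusion of subalgebras, it commutes trivially with $\Ad(V)$, giving commutativity of the top square already at the algebra level. For the bottom square, the maps $c$ are literally $C^*$-subideal inclusions once an adequate $\Gamma$-module on $W_\infty$ is chosen to restrict to one on $W$; the square then consists entirely of algebra inclusions and commutes strictly.

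For diagram \eqref{eq:defjs}, the vertical maps $j_\boundary$ and $c$ are both instances of the canonical $K$-theory isomorphism of Lemma \ref{lemma:from-hry}, applied to the pairs $\bigl([0,\infty)\times\pa W,\,\pa W\bigr)$ and $(W_\infty,\,W)$. The horizontal map $j_+$ is induced by the natural inclusion $\kappa\colon[0,\infty)\times\pa W\hookrightarrow W_\infty$, a continuous coarse $\Gamma$-map sending $\pa W$ to $\pa W\subset W$. What must be checked is the naturality of the Lemma \ref{lemma:from-hry} isomorphism under such maps of pairs $(X,Z)\to(X',Z')$; this follows because on both sides the isomorphism is obtained from the inclusions $D^*(U_R(Z))^\Gamma\hookrightarrow\relD{X}{Z}^\Gamma$ as $R\to\infty$, and pushforward along $\kappa$ commutes with these direct-limit inclusions once one uses compatible covering isometries for $\kappa$ and its restriction.

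The main obstacle, such as it is, is bookkeeping: different covering isometries may yield different algebra-level maps, and one must either make coherent global choices or observe that canonicality in $K$-theory (Proposition \ref{prop:functoriality}, \cite[Lemma 3]{hry-mv}) sidesteps the issue entirely. No genuine analytic difficulty intervenes; the proposition is a functoriality check, and with the above setup the two diagrams commute.
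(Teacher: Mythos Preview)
Your proposal is correct and follows essentially the same approach as the paper: use a single covering isometry for the inclusion $\boundary W\hookrightarrow W$ so that the $C^*$- and $D^*$-pushforwards are induced by the same $\Ad(V)$, whence all maps become algebra inclusions and commutativity is immediate from naturality of $K$-theory. You supply more detail than the paper does for diagram \eqref{eq:defjs} (the paper simply defers to the already-established isomorphism claims), but the substance is the same.
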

\begin{proof}
  To construct the maps $C^*(\pa W)^\Gamma\to C^*(W)^\Gamma$ and $D^*(\pa
  W)^\Gamma\to D^*(W)^\Gamma$ we can and will use the \emph{same} isometry
  covering the inclusion $j\colon \boundary W\to W$.
  Then all the maps are induced by inclusions of algebras and therefore the
  commutativity 
  follows from naturality of K-theory. The isomorphism claims in the statement
  have already been discussed. 
\end{proof}



\subsection{Kasparov Lemma}

The following Lemma, stated for the first time in \cite[Prop. 3.4]{kasparov-invariants}, see also
\cite[Lemma 7.2]{hig-ped-roe}, is a useful tool in proving  pseudolocality. 

\begin{lemma}[Kasparov Lemma]\label{lem:kasparov}
Let $H$ be an adequate $X$-module.
A bounded operator $A\colon H\to H$ is pseudo-local if and only if $\psi A \phi$ is compact 
whenever	$\psi$ and	 $\phi$ are bounded continuous functions on $X$ such that the supports 
${\rm supp}( \psi)$	and ${\rm supp}( \phi)$ are disjoint and at least one of
them is compact.

If $A$ is a norm limit of operators of bounded propagation, it is sufficient
to consider only functions of compact support.
\end{lemma}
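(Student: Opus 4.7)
The plan is to prove both implications separately and then handle the refinement for operators that are norm limits of finite-propagation operators.

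For the forward direction, suppose $A$ is pseudo-local and let $\psi,\phi$ be bounded continuous with disjoint supports, one of them (say $\phi$) having compact support. Since $\supp(\phi)$ is compact and $\supp(\psi)$ is a disjoint closed set in a proper metric space, they have positive distance, so I can find $\chi\in C_c(X)$ with $\chi\equiv 1$ on $\supp(\phi)$ and $\chi\equiv 0$ on $\supp(\psi)$. Then $\chi\phi=\phi$ and $\psi\chi=0$, so
\begin{equation*}
\psi A\phi = \psi A\chi\phi = \psi\chi A\phi + \psi[A,\chi]\phi = \psi[A,\chi]\phi,
\end{equation*}
which is compact by pseudo-locality of $A$. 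The case where only $\psi$ is compact is symmetric (or follows by taking adjoints).

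For the reverse direction, suppose $\psi A\phi$ is compact whenever $\psi,\phi$ have disjoint supports with at least one compactly supported. Given $f\in C_c(X)$, I want to show $[A,f]$ is compact. Pick $\chi\in C_c(X)$ with $\chi\equiv 1$ on $\supp(f)$. Then $(1-\chi)$ and $f$ have disjoint supports with $f$ compactly supported, so the hypothesis yields that $(1-\chi)Af$ and $fA(1-\chi)$ are both compact. Consequently
\begin{equation*}
[A,f] = Af - fA = \chi A f - f A\chi \pmod{\mathcal{K}},
\end{equation*}
and the problem reduces to showing $\chi Af - fA\chi$ is compact with $\chi,f\in C_c(X)$. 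For this, fix $\varepsilon>0$ and choose a partition of unity $\{\eta_i\}$ subordinate to a finite cover of $\supp(\chi)\cup\supp(f)$ by balls of diameter less than $\delta$, with $\delta$ small enough that $\chi$ and $f$ each vary by at most $\varepsilon$ on each ball. Writing $\chi Af - fA\chi = \sum_{i,j}(\chi\eta_i A\eta_j f - f\eta_i A\eta_j\chi)$, the pairs $(i,j)$ split as follows: when $\supp(\eta_i)\cap\supp(\eta_j)=\emptyset$, the individual terms are compact by hypothesis; when the supports overlap, replacing $\chi\eta_i$ by $\chi(x_i)\eta_i$ and $\eta_j f$ by $f(x_j)\eta_j$ (and analogously for the other term) produces the scalar coefficient $\chi(x_i)f(x_j)-f(x_i)\chi(x_j)$, which is $O(\varepsilon)$ because $x_i,x_j$ are within $2\delta$ of each other. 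Thus $\chi Af - fA\chi$ lies within $O(\varepsilon\|A\|)$ of a compact operator; letting $\varepsilon\to 0$ and using that the compacts are norm-closed gives compactness.

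For the final assertion, suppose $A = \lim_n A_n$ in norm with each $A_n$ of propagation $\leq R_n$, and suppose only that $\psi A\phi$ is compact for $\psi,\phi$ of \emph{compact} disjoint supports. To deduce pseudo-locality it suffices, by the previous paragraph, to handle the case of disjoint $\psi,\phi$ with only one (say $\psi$) compactly supported. Using that $X$ is proper, the $R_n$-neighborhood $U_n$ of $\supp(\psi)$ is precompact, so I can pick $\chi_n\in C_c(X)$ with $\chi_n\equiv 1$ on $U_n$. Finite propagation of $A_n$ together with $\supp(\psi)$ being compact yields $\psi A_n\phi = \psi A_n(\chi_n\phi)$, and now $\chi_n\phi$ has compact support disjoint from $\supp(\psi)$, so $\psi A_n\phi$ is compact by hypothesis. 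Taking the norm limit $n\to\infty$ shows $\psi A\phi$ is compact, and the previous part then delivers pseudo-locality.

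The main obstacle is the partition-of-unity step: balancing the commutator $\chi Af - fA\chi$ requires exploiting both the hypothesis (for well-separated pieces) and uniform continuity of $\chi,f$ on compact sets (for nearby pieces), and showing that the approximation error is controlled uniformly; the closedness of the compact ideal then finishes the argument.
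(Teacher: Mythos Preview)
Your forward direction is fine. For the reverse direction you go well beyond the paper, which simply cites \cite[5.4.7]{higson-roe-book}; but your partition-of-unity argument has a gap. After splitting $\sum_{i,j}$ into disjoint and overlapping pairs, you assert that the overlapping contribution is within $O(\varepsilon\|A\|)$ of a compact operator because each coefficient $\chi(x_i)f(x_j)-f(x_i)\chi(x_j)$ is $O(\varepsilon)$. That does not follow: smallness of the individual coefficients says nothing about the norm of the sum $\sum_{(i,j)\in\text{overlap}} c_{ij}\,\eta_i A\eta_j$, since the number of overlapping pairs is not controlled. The standard fix (as in Higson--Roe's book) is to organise the argument so that this summation issue never arises, for example by first passing to step-function approximants $\tilde\chi=\sum_i\chi(x_i)\eta_i$, $\tilde f=\sum_j f(x_j)\eta_j$ (which gives a global error $\le 4\varepsilon\|A\|(\|\chi\|_\infty+\|f\|_\infty)$) and then noting that $\tilde\chi A\tilde f-\tilde f A\tilde\chi$ is a \emph{finite} sum of terms $c_{ij}\eta_i A\eta_j$ whose diagonal $i=j$ terms vanish; one then needs an additional device (bounded-multiplicity covers plus a colouring, or Borel partitions) to handle the remaining off-diagonal overlapping terms. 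As written, your estimate is unjustified.

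For the second assertion there is a cleaner slip. You write ``$\psi A_n\phi=\psi A_n(\chi_n\phi)$, and now $\chi_n\phi$ has compact support disjoint from $\supp(\psi)$, so $\psi A_n\phi$ is compact by hypothesis.'' But the hypothesis concerns $A$, not $A_n$: it gives compactness of $\psi A(\chi_n\phi)$, which is a different operator. The repair is immediate---write $\psi A\phi=\psi A(\chi_n\phi)+\psi A((1-\chi_n)\phi)$, note the first summand is compact by hypothesis, and for the second use $\psi A_n((1-\chi_n)\phi)=0$ (finite propagation) so that $\|\psi A((1-\chi_n)\phi)\|\le\|A-A_n\|\,\|\psi\|_\infty\|\phi\|_\infty\to 0$---but as stated the step is wrong. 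The paper's argument is dual to this and slightly tidier: reduce by a limit to the case that $A$ itself has propagation $R$, then split the non-compactly-supported function as $\psi=\psi_1+\psi_2$ with $\psi_1\in C_c(X)$ and $\supp(\psi_2)$ at distance $>R$ from $\supp(\phi)$; then $\phi A\psi_2=0$ and $\phi A\psi_1$ is compact by hypothesis.
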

\begin{proof}
A proof of the first statement for the case that $X$
is compact is given in \cite[5.4.7]{higson-roe-book}. It directly covers the
general case, as well.

The second statement about finite propagation operators is already remarked in
\cite[footnote 6]{hig-ped-roe}. To prove it, it suffices (by a limit
argument) to assume that $A$ has bounded propagation $R$. Then, given
$\phi,\psi$ with $\phi$ of 
compact support and such that $\phi\psi=0$, write $\psi=\psi_1+\psi_2$ such
that $\psi_1$ has compact support and $\psi_2$ has support of distance $R$
from $\phi$. Then by the bounded propagation property, $\phi A\psi_2=0$, and
by assumption $\phi A\psi_1$ is compact, so also $\phi A\psi$ is compact and
the assumptions of the usual form of the Kasparov Lemma are fulfilled.
\end{proof}

\subsection{The Mayer-Vietoris sequence}\label{subsect:mayer-vietoris}

Assume that $X=X_1\cup X_2$ is a Riemannian manifold (typically non-compact),
decomposed into two closed subsets $X_1,X_2$, with $X_0:=X_1\cap X_2$. (The
more general case of metric spaces is treated in exactly the same way.) 
We make  the following  \textbf{excision assumption}:
 $X_0:=X_1\cap X_2$ is big
enough in the following sense (of Higson-Roe-Yu \cite{hry-mv}):
for each $R>0$ there is $S>0$ such that $U_R(X_1)\cap U_R(X_2)\subset
U_S(X_0)$. 

\smallskip




Using along the way the relative Roe-algebras for $X_i\subset X$ and Lemma \ref{lemma:from-hry}, one finally
gets the expected commuting diagram of
6-terms exact Mayer-Vietoris sequences \cite[Section 3]{Siegel}
{\small\begin{equation}\label{mv}
  \begin{CD}
  \dots \to K_0(C^* (X_1))\oplus K_0(C^* (X_2)) @>>>
    K_0(C^*(X)) @>{\delta_{\rm{MV}}}>> K_1 (C^* (X_1 \cap X_2))   \to \\
   @VVV @VVV @VVV \\
   \dots \to K_0(D^* (X_1))\oplus K_0(D^* (X_2)) @>>>
    K_0(D^*(X)) @>{\delta_{\rm{MV}}}>> K_1 (D^* (X_1 \cap X_2))   \to\\
  \end{CD}
\end{equation}
}
Exactly the same works for the $\Gamma$-equivariant versions.

\subsection{Mayer-Vietoris for the cylinder}\label{subsect:mv-for-cylinder}

Let $M$ be a Galois $\Gamma$-cover of a compact manifold $Z$ and consider the cylinder $X:=\RR\times M$,
with $\Gamma$ acting in a trivial way on $\RR$. We set $X_1=(-\infty,0]\times M$ and 
$X_2= [0,\infty)\times M$ so that $X_0=\{0\}\times M=M$. 
This decomposition of the cylinder clearly satisfies the excision axiom;
thus we have the commuting diagram of long exact sequence for  $C^*$ and $D^*$ 
{\small 
\begin{equation}\label{mv-cyl-c}
    \begin{CD}
   \dots\to K_0(C^* ((-\infty,0]\times M)^\Gamma )\oplus
      K_0(C^* ([0,\infty)\times M)^\Gamma ) @>>> K_0(C^*(\RR\times
      M)^\Gamma) @>{\delta_{\rm{MV}}}>> K_1 (C^* (M)^\Gamma)\to \dots\\
  @VVV @VVV @VVV\\
   \dots\to K_0(D^* ((-\infty,0]\times M)^\Gamma )\oplus
      K_0(D^* ([0,\infty)\times M)^\Gamma ) @>>> K_0(D^*(\RR\times
      M)^\Gamma) @>{\delta_{\rm{MV}}}>> K_1 (D^* (M)^\Gamma)\to \dots
    \end{CD}
\end{equation}
}

\begin{lemma}
  For any metric space $M$ with isometric $\Gamma$-action
  \begin{equation*}
    K_*(C^*([0,\infty)\times M)^\Gamma) =0;\qquad K_*(D^*(
    [0,\infty)\times M)^\Gamma) =0.
  \end{equation*}
\end{lemma}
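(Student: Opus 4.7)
The plan is to establish both vanishing statements via a single Eilenberg swindle, exploiting the \emph{coarse flasqueness} of the ray $[0,\infty)$. Write $X := [0,\infty) \times M$ and let $\phi \colon X \to X$ denote the continuous $\Gamma$-equivariant translation $(s,m) \mapsto (s+1,m)$, which is an isometric embedding with iterates $\phi^n(X) = [n,\infty) \times M$. By Proposition~\ref{prop:functoriality} together with Definition~\ref{def:cover-D*-sense}, and using the built-in $\ell^2(\naturals)$-factor in adequate modules, I choose a $\Gamma$-equivariant isometry $V \colon H \to H$ covering $\phi$ in the $D^*$-sense. A direct check using that $\phi$ is a translation shows that $V$ is in fact a multiplier of both $C^*(X)^\Gamma$ and $D^*(X)^\Gamma$: conjugation by $V$ increases propagation only by the finite propagation of $V$, and preserves both local compactness and pseudolocality.

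Fixing a $\Gamma$-equivariant $C_0(X)$-module isomorphism $H \cong H \otimes \ell^2(\naturals)$, which absorbs an auxiliary swindle copy of $\ell^2(\naturals)$ into the built-in one, I define the swindle map
\begin{equation*}
\Sigma(T) := \bigoplus_{n=0}^{\infty} V^n T V^{*n},
\end{equation*}
interpreted as a bounded operator on $H \otimes \ell^2(\naturals) \cong H$. The essential verification is that $\Sigma(T)$ lies in the correct algebra. Propagation is uniformly bounded in $n$ because $\phi$ is an isometric translation. For local compactness (in the $C^*$ case) or pseudolocality (in the $D^*$ case), the key observation is that the $n$-th summand $V^n T V^{*n}$ acts effectively only on vectors with $X$-support contained in $[n,\infty) \times M$. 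Hence, for every $\psi \in C_c(X)$ with compact support, only finitely many terms of $\psi \Sigma(T)$ (respectively of $[\psi, \Sigma(T)]$) are non-zero, and each is compact by the corresponding property of $T$.

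Next, let $\sigma$ denote the unilateral shift on the auxiliary $\ell^2(\naturals)$-factor; since $\sigma$ has propagation $0$ and commutes with all $\psi \in C_c(X)$, it is a multiplier of both $C^*(X)^\Gamma$ and $D^*(X)^\Gamma$. Hence $w := V \otimes \sigma$ is an isometry in the multiplier algebra. A direct block-diagonal computation yields
\begin{equation*}
w \Sigma(T) w^* = \Sigma(T) - T \otimes P_0,
\end{equation*}
where $P_0$ denotes the rank-one projection onto the $n=0$ summand. Taking $T$ to be a projection representing an arbitrary class in $K_0$ (with the $K_1$ case handled by the parallel swindle on invertibles, or via suspension), the orthogonal decomposition $\Sigma(T) = (T \otimes P_0) \oplus (\Sigma(T) - T \otimes P_0)$ together with the Murray--von Neumann equivalence $\Sigma(T) \sim \Sigma(T) - T \otimes P_0$ implemented by the partial isometry $w \Sigma(T)$ gives
\begin{equation*}
[\Sigma(T)] = [T] + [\Sigma(T) - T \otimes P_0] = [T] + [\Sigma(T)],
\end{equation*}
so $[T] = 0$ in K-theory, establishing the claim.

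The main obstacle is the bookkeeping verification that $\Sigma$ genuinely lands in the appropriate $C^*$- or $D^*$-algebra, and that $V$, $\sigma$, and $w = V \otimes \sigma$ are genuine multipliers with the required coarse-geometric properties, rather than merely bounded operators on $H$. Once these technical points are in place, the K-theoretic swindle itself is a routine application of Murray--von Neumann equivalence by an isometry in the multiplier algebra.
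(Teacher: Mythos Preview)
Your proposal is correct and follows exactly the approach the paper invokes: the paper simply observes that $[0,\infty)\times M$ is flasque and cites Roe's Eilenberg swindle (\cite[Proposition~9.4]{roe-cbms}), noting that the argument carries over verbatim to the $\Gamma$-equivariant and $D^*$ settings, while you have written out that swindle in detail. One small point worth tightening is that the uniform propagation bound for $V^nTV^{*n}$ really uses a \emph{specific} choice of $V$ (the literal translation isometry, which has propagation zero relative to $\phi$ and intertwines multiplication operators exactly), rather than a generic isometry covering $\phi$ in the $D^*$-sense; with that choice all your verifications go through as stated.
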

\begin{proof}
  This is a special instance of a general principle introduced by Roe: any
  space of the form $[0,\infty)\times M$ is \emph{flasque} in the sense of
  \cite[Definition 9.3]{roe-cbms}, and then by 
  \cite[Proposition 9.4]{roe-cbms} the K-theory of $C^*(
  [0,\infty)\times M)$ vanishes.

  The argument given there, based on an Eilenberg swindle, word by word
  applies also to the $\Gamma$-invariant subalgebras and to $D^*$.
\end{proof}

In particular, we conclude that the boundary maps in the Mayer-Vietoris
sequence induce compatible isomorphisms
\begin{equation}\label{iso-with-shift}
  \begin{CD}
     K_{*+1} (C^* (\RR\times
    M)^\Gamma) @>{\delta_{\rm{MV}}}>{\simeq}> K_{*}(C^*( M)^\Gamma)\\
   @VVV @VVV\\
     K_{*+1} (D^* (\RR\times
    M)^\Gamma) @>{\delta_{\rm{MV}}}>{\simeq}> K_{*}(D^*(M)^\Gamma) .
  \end{CD}
\end{equation}

\begin{remark}\label{rem:description_of_delta_MV}
 The maps  $\delta_{MV}$ of \eqref{iso-with-shift} are
  given as follows: take a representative $\alpha\in C^*(\reals\times
  M)^\Gamma$ of the K-theory class, i.e.~either a projector or an
  invertible (using that the algebra is stable). 
  One now has to trace the definition of the splicing argument
  which gives rise to \eqref{mv}: map $\alpha$ to
  $C^*(\reals\times M)^\Gamma/\relC{\reals\times M}{
    (-\infty,0]\times M}^\Gamma$. Then we lift it (through the inverse of the
 natural  isomorphism) to $C^*( [0,\infty)\times M)^\Gamma/\relC{
   [0,\infty)\times M}{ \{0\}\times M}^\Gamma$. For this, let $\psi_+$ be the
  characteristic function of $
  [0,\infty)\times M$, then the compression $\psi_+\alpha\psi_+$ (where $\psi_+$ acts
  as multiplication operator) is such a lift. That it has the relevant
  properties follows as in Lemmas \ref{lem:involutions_in_quotient} and
  \ref{lem:compact_supp} proved below.
Finally, $\delta_{MV}(\alpha)=c^{-1}\delta([\psi_+\alpha\psi_+])$ where
$\delta$ is 
  the boundary homomorphism of the K-theory long exact sequence for
  $\relC{[0,\infty)\times M}{ \{0\}\times M}^\Gamma\into C^*(
  [0,\infty)\times M)^\Gamma$ and $c$ is the isomorphism of Lemma
  \ref{lemma:from-hry}. 

  Exactly the same construction works for $D^*$.
\end{remark}



\section{Proofs of the main theorems} 
\label{sec:proofs}

The goal of this Section is to provide a proof of our two main theorems.
We shall begin by stating a key result, the ``cylinder delocalized index
theorem''. This result
is the cornerstone for the proof of both  theorems. We state the
cylinder delocalized index theorem in Subsection
\ref{subsect:key-cylinder}, but we defer the (quite technical) proof to
Subsection \ref{sec:proof_cylindercase}.
Next we explain how the cylinder delocalized index theorem can be employed in
order to prove both theorems. We do this in Subsection \ref{reduction-aps}
and Subsection \ref{reduction-partition}. Finally, as anticipated, we give  a 
detailed proof of the cylinder delocalized index theorem in Subsection \ref{sec:proof_cylindercase}.

\subsection{The cylinder delocalized index
  theorem}\label{subsect:key-cylinder}

\begin{notation}
We let $M$ be a boundaryless manifold with 
a free, isometric and cocompact action of $\Gamma$. 
We assume that $M$ is endowed with a $\Gamma$-invariant metric
of positive scalar curvature. We assume $M$ to be of dimension  $n$,
with $n$ odd.
We shall consider
$
\RR\times M$, 
$\RR_{\geq 0}\times M$,
$\RR_{\leq 0}\times M$.
We consider the Dirac operators
$$D_M \;\;\text{on} \;\;M \;\;\;\;\text{and}\;\;\;\; D_{\cyl} \;\;\text{on} \;\; \RR\times M\,.$$
We shall also employ the notation $D_{\RR\times M}$ for  $D_{\cyl}$.
\end{notation}

The positive scalar curvature assumption on $M$ implies that $D_M$
is $L^2$-invertible; thus there is a well defined $\rho$-class
$\rho (D_M)\in K_0 (D^* (M)^\Gamma)$. Also $D_{\cyl}$ is
$L^2$-invertible; 
hence $\chi (D_{\cyl})$, with $\chi$ a suitable chopping function, is an involution. 
This means that there is a well defined $\rho$-class on the cylinder:
$\rho(D_{\RR\times M})\in 
K_{1} (D^* (\RR\times M)^\Gamma)$. We know by \eqref{iso-with-shift} that for
the cylinder
$\RR\times M= (\RR_{\leq}\times M)\cup_M (\RR_{\geq}\times M)$
there is a well-defined  Mayer-Vietoris isomorphism 
$$\delta_{{\rm MV}} : K_{1} (D^* (\RR\times M)^\Gamma)\to K_0 (D^* (M)^\Gamma).$$
 Thus it makes sense to consider $\delta_{{\rm MV}}(\rho(D_{\RR\times M}))
\in K_0 (D^* (M)^\Gamma)$. The following result will be crucial:

\begin{theorem}[Cylinder delocalized index
  theorem]\label{theo:fundamental-cylinder}
\begin{equation}\label{fundamental-cylinder}
\delta_{{\rm MV}} (\rho(D_{\RR\times M}))= \rho (D_M) \quad\text{in}\quad K_0 (D^* (M)^\Gamma).
\end{equation}
\end{theorem}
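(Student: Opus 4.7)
The plan is to exploit the explicit product structure of the cylinder. Since $\dim(\RR\times M)=n+1$ is even, the spinor bundle of $\RR\times M$ is $\ZZ/2$-graded, and Clifford multiplication in the $\RR$-direction identifies $S_{\RR\times M}^\pm$ with the pullback of $S_M$. In this picture one can write
\[
 D_{\cyl,+}=\pa_t+D_M\colon L^2(\RR,L^2(M,S_M))\to L^2(\RR,L^2(M,S_M)).
\]
The positive scalar curvature hypothesis makes $D_M$, and hence $D_{\cyl}$, $L^2$-invertible with a common spectral gap, so one may pick an odd smooth chopping function $\chi$ which equals $\pm1$ on both spectra. Then $\chi(D_M)$ and $\chi(D_{\cyl})$ are honest involutions in their respective $D^*$-algebras, and one obtains the representatives $\frac12(\chi(D_M)+1)$ for $\rho(D_M)$ and $v:=U\chi(D_{\cyl})_+$ for $\rho(D_{\cyl})$, where $U$ is a fixed isometry covering $\id$ in the $D^*$-sense under the identification of $S_{\RR\times M}^+$ with $S_{\RR\times M}^-$.

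Next I would apply the explicit recipe for $\delta_{\rm MV}$ recalled in Remark \ref{rem:description_of_delta_MV}: compress $v$ by $\psi_+$, the characteristic function of $[0,\infty)\times M$, regard the result as an invertible modulo $\relC{[0,\infty)\times M}{\{0\}\times M}^\Gamma$, and take the K-theory boundary of the short exact sequence $0\to\relC{[0,\infty)\times M}{\{0\}\times M}^\Gamma\to C^*([0,\infty)\times M)^\Gamma\to \dots\to 0$, followed by $c_*^{-1}$ of Lemma \ref{lemma:from-hry}. Thus the task reduces to building an explicit parametrix for $\psi_+v\psi_+$ modulo the relative Roe ideal, and to writing down the standard $2\times 2$ projector formula of the boundary map.

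For the concrete computation, decompose $L^2(M,S_M)=H_+(D_M)\oplus H_-(D_M)$ into the positive and negative spectral subspaces, with spectral projections $P_\pm$. Since $\chi(D_M)=P_+-P_-$, we have $\rho(D_M)=[P_+]\in K_0(D^*(M)^\Gamma)$. Fourier transformation in $t$ turns $\chi(\pa_t+D_M)$ into a fibred multiplication operator, and the decomposition is respected, so that on $H_\pm$ the half-line compression of $\chi(\pa_t\pm|D_M|)$ is a classical Wiener--Hopf/Toeplitz-type operator whose K-theoretic boundary can be computed by hand. The $H_+$-piece contributes a trivial projector, the $H_-$-piece contributes the identity, and after assembling the $2\times 2$ Connes--Skandalis-type projector one recovers exactly $[P_+]$, as desired.

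The hard part is precisely this last calculation, which is why the authors defer it to Subsection \ref{sec:proof_cylindercase}. Even though the slogan ``boundary of Dirac is Dirac'' is well understood, verifying it rigorously inside the coarse algebras requires: (a) checking that all compressions and spectral pieces and their parametrices sit in the correct relative $C^*$- and $D^*$-algebras; (b) controlling propagation and pseudolocality through Fourier analysis and functional calculus, which is where the Kasparov Lemma \ref{lem:kasparov} is indispensable; and (c) producing an explicit inverse of $\psi_+v\psi_+$ modulo $\relC{[0,\infty)\times M}{\{0\}\times M}^\Gamma$, approximable by finite-propagation operators, which visibly manifests $P_+$ after passage to the K-theory boundary.
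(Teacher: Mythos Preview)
Your outline matches the paper's strategy closely: compress by $\psi_+$, apply the boundary map, and compare with the push-forward of $[P_+]=[\chi_{[0,\infty)}(D_M)]$ via an explicit isometry covering the inclusion $M\hookrightarrow[0,\infty)\times M$. There is, however, one substantive slip and a couple of places where the paper's implementation differs from what you sketch.

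The slip: you repeatedly write $\relC{[0,\infty)\times M}{\{0\}\times M}^\Gamma$ and $C^*([0,\infty)\times M)^\Gamma$, but the whole computation must take place in $D^*$, not $C^*$. The rho-class $\rho(D_M)=[P_+]$ lives in $K_0(D^*(M)^\Gamma)$; the projection $P_+=\chi_{[0,\infty)}(D_M)$ is not, in general, in $C^*(M)^\Gamma$. The correct short exact sequence is $0\to\relD{[0,\infty)\times M}{M}^\Gamma\to D^*([0,\infty)\times M)^\Gamma\to\cdots$; see Remark~\ref{remark:explicit_mv}.

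On the implementation side, the paper does two things you do not mention. First, it writes down the isometry $V\colon\mathcal{L}^2(M)\to\mathcal{L}^2([0,\infty)\times M)$, $(Vs)(t)=\sqrt{2|D|}\,e^{-t|D|}s$, and proves (Proposition~\ref{prop:v-covers}) that $V$ covers the inclusion in the $D^*$-sense; most of the finite-propagation and pseudolocality work is hidden here. Second, rather than attacking $\chi(D_{\cyl})_+=(D+\pa_t)/\sqrt{D^2-\pa_t^2}$ directly, the paper deforms it through invertibles in $D^*(\RR\times M)^\Gamma$ to the rational expression $(|D|+\pa_t)/(D-\pa_t)$ (Proposition~\ref{prop:deformed-in-d}), for which the explicit one-sided inverse $Q=\psi_+\tfrac{|D|-\pa_t}{D+\pa_t}\psi_+$ satisfies $(\psi_+\tfrac{|D|+\pa_t}{D-\pa_t}\psi_+)Q=\Id$ and $Q(\psi_+\tfrac{|D|+\pa_t}{D-\pa_t}\psi_+)=\Id-P$ with $P=VP_+V^*$. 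These identities are checked via Hardy-space arguments after Fourier transform in $t$, close in spirit to your Wiener--Hopf remark but carried out for this particular rational symbol. Finally, your description of the $H_\pm$ contributions is slightly garbled: the nontrivial piece (the rank-one projector onto $f_\lambda(t)=\sqrt{2\lambda}\,e^{-\lambda t}$) comes from $\lambda>0$, while on $\lambda<0$ both compositions equal the identity and contribute nothing.
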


\subsection{Proof of the delocalized APS index theorem assuming Theorem \ref{theo:fundamental-cylinder}}
\label{reduction-aps}

In this subsection we make use of the fundamental identity on the cylinder,
 \eqref{fundamental-cylinder},
in order to give a proof of Theorem \ref{theo:k-theory-deloc},  the delocalized APS index theorem.

\begin{notation}
We consider $W$, $\pa W$ and $W_\infty$ as in the geometric set-up 
\ref{geometric-set-up}. 
We also consider 
$$
\RR\times \partial W\,,\quad 
\RR_{\geq 0}\times \partial{W}\,,
\quad 
\RR_{\leq 0}\times \partial{W}\,.
$$
We consider the Dirac operators
$$D \;\;\text{on} \;\; W_\infty\,,\;\;D_\partial \;\;\text{on} \;\;\partial{W}\;\;\text{and}\;\;D_{\cyl} \;\;\text{on} \;\; \RR\times \partial{W}\,.$$
Recall that the boundary $\partial{W}$ is endowed with a metric of positive
scalar curvature, so that
the coarse index class of $D$ is well defined as an element  $\Ind^{{\rm rel}}
(D)\in 
K_0 ( \relC{W_\infty}{W}^\Gamma)$. 
The positive scalar curvature assumption implies that also $D_{\cyl}$ is
$L^2$-invertible; 
hence $\chi (D_{\cyl})$ is an involution, too.  

 We denote by $\psi$ the characteristic function
of $ [0,\infty)\times \boundary W$ on $W_\infty$ and by $\psi_+$ the
corresponding characteristic function on $\RR\times \partial{W}$.
\end{notation}

Consider the operator $\psi_{+} \,\chi (D_{\cyl})\,\psi_{+}$
on $\RR_{\geq}\times \partial{W}$; obviously, this is not an involution any more. 
Similarly,  consider the operator $\psi \,\chi (D_{\cyl})\,\psi$ on
$W_\infty$, which also fails to be an involution.

We start with a basic lemma about commutators with $\psi_+$.
\begin{lemma}\label{lem:compact_supp}
  If $T\in D^*(\RR\times \partial W)^\Gamma$ then
  $[T,\psi_+]$ is in 
  $\relD{\RR\times \partial W}{\partial W}^\Gamma$, and correspondingly if
  $T\in D^*(W_\infty)^\Gamma$ then $[T,\psi]\in \relD{W_\infty}{W}^\Gamma$. 
\end{lemma}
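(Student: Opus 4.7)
The plan is to prove the cylinder statement in detail; the $W_\infty$ statement will follow by an identical argument. The strategy is: (i) reduce to $T\in D^*_c$ via a density/closure argument; (ii) prove the support condition by a direct finite-propagation analysis; (iii) verify pseudo-locality of $[T,\psi_+]$ via a short commutator identity; (iv) observe $\Gamma$-invariance is automatic.

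For step (i), since $\psi_+$ acts as a bounded multiplication operator of norm $1$ and propagation $0$, the map $T\mapsto[T,\psi_+]$ is bounded on $D^*(\RR\times\pa W)^\Gamma$ with the operator norm. The ideal $\relD{\RR\times\pa W}{\pa W}^\Gamma$ is closed by definition, so it suffices to establish the claim for a dense subalgebra, and we take the algebra of $T\in D_c^*(\RR\times\pa W)^\Gamma$ of finite propagation $R$. For step (ii) fix such a $T$ and let $\phi\in C_c(\RR\times\pa W)$ with $d(\supp(\phi),\{0\}\times\pa W)>R$. Then $\supp(\phi)$ lies either in $\{t>R\}$ or in $\{t<-R\}$. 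If $\supp(\phi)\subset\{t>R\}$, then $\phi\psi_+=\phi$, so $\phi\psi_+T=\phi T$; on the other hand, by the finite propagation property applied in the form $\supp(Ts)\subset U_R(\supp(s))$, the operator $\phi T$ takes sections to sections supported in $\{t>0\}$ (because an $R$-neighborhood of $\{t>R\}$ is contained in $\{t>0\}$), and by taking adjoints the same holds for $(\phi T)^*=T^*\phi$, which gives $\phi T\psi_+=\phi T$. Hence $\phi[T,\psi_+]=0$. If $\supp(\phi)\subset\{t<-R\}$ then $\phi\psi_+=0$ and, again by finite propagation, the sections produced by $T^*\phi$ are supported in $\{t<0\}$, so $\phi T\psi_+=0$ as well. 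A symmetric argument, playing the same game from the right, yields $[T,\psi_+]\phi=0$. This is exactly the support condition defining $\relD{\RR\times\pa W}{\pa W}^\Gamma$.

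For step (iii) we check pseudo-locality of $[T,\psi_+]$. For $\phi\in C_c(\RR\times\pa W)$, the multiplication operators $\phi$ and $\psi_+$ commute, so a direct expansion gives
\begin{equation*}
[[T,\psi_+],\phi]=[T,\phi]\psi_+-\psi_+[T,\phi].
\end{equation*}
Since $T$ is pseudo-local, $[T,\phi]$ is compact, and therefore so is the right-hand side. Consequently $[T,\psi_+]$ is pseudo-local, and together with the finite propagation inherited from $T$ (the propagation of $[T,\psi_+]$ is at most that of $T$) we conclude $[T,\psi_+]\in D_c^*(\RR\times\pa W)$. Combined with step (ii), this places $[T,\psi_+]$ in $\relD{\RR\times\pa W}{\pa W}$. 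Step (iv) is immediate: $\psi_+$ is $\Gamma$-invariant (since $\Gamma$ acts trivially on the $\RR$ factor), so if $T$ is $\Gamma$-invariant then so is $[T,\psi_+]$, and we land in the $\Gamma$-invariant ideal.

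The same argument proves the statement for $\psi$ on $W_\infty$: if $\phi\in C_c(W_\infty)$ satisfies $d(\supp(\phi),W)>R$, then $\supp(\phi)$ is forced to lie in the cylindrical end at parameter $t>R$, and an $R$-neighborhood of this set still lies in the cylindrical end $\{t>0\}$ where $\psi\equiv 1$. The same two identities $\phi T\psi=\phi T=\phi\psi T$ follow, together with the symmetric statements. I do not expect any serious obstacle: the only subtle point is to make sure, in the $W_\infty$ case, that the relative algebra's support condition is measured from all of $W$ (not merely from $\pa W$), but this actually \emph{simplifies} the analysis because the forbidden region is confined to the cylindrical end and there is only one side of the boundary to deal with, eliminating the two-case analysis needed on $\RR\times\pa W$.
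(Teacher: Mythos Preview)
Your proof is correct and follows essentially the same strategy as the paper: reduce to finite propagation and verify the support condition directly from the propagation bound. The paper dispenses with your explicit Jacobi-type computation for pseudo-locality (simply noting that $\psi_+$, as a propagation-zero multiplication operator commuting with $C_0$, already lies in $D^*$), and then goes on to prove an additional local-compactness property of $[T,\psi_+]$ against $\phi\in C_c((0,\infty)\times\partial W)$ which is not part of the definition of $\relD{X}{Z}$ and is therefore not needed for the lemma as stated.
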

\begin{proof}
  We follow the proof of \cite[Lemma 4.3]{roe-cbms}. We already know that 
 $[T,\psi_+]$ belongs to  $D^*(\RR \times \partial W)^\Gamma$
 and we only need to show that it lies actually in the 
 ideal  $\relD{\RR\times \partial W}{\partial W}^\Gamma$.
  We can assume that $T$ has finite propagation $R$. Then, outside a
  sufficiently large neighborhood of the support of $\psi_+$, $[T,\psi_+]$ is
  zero, because there $\psi_+$ acts as the identity. 
  It follows that it is compactly supported in the $\RR$ direction, as desired.

  Finally, given $\phi\in C_c((0,\infty)\times\partial W)$, we have to show
  that $[T,\psi_+]\phi$ is compact. But
  \begin{equation*}
    [T,\psi_+]\phi = T\phi-\psi_+T\phi= (1-\psi_+)T\phi.
  \end{equation*}
  Because of finite propagation of $T$ we can replace $(1-\psi_+)$ by
  $(1-\psi_+)\alpha$ where $\alpha$ has compact support. Then, as
  $(1-\psi_+)\alpha\phi=0$, by the pseudolocality of $T$ this operator
  $(1-\psi_+)T\phi=(1-\psi_+)\alpha T\phi$ indeed is compact.

 \end{proof}

\begin{remark}\label{remark-general1}
The first part of Lemma \ref{lem:involutions_in_quotient} holds unchanged if we
consider, more generally, a boundaryless manifold $M$ with 
a free, isometric and cocompact action of $\Gamma$ and endowed with a $\Gamma$-invariant
metric of positive scalar curvature. In this case
  $\psi_{+} \,\chi (D_{\cyl})\,\psi_{+}$ is an involution in 
  $D^*(\RR_{\geq}\times M)^\Gamma/ \relD{\RR_{\geq}\times 
  M}{M}^\Gamma$.
  \end{remark}

\begin{lemma}\label{lem:involutions_in_quotient}
  $\psi_{+} \,\chi (D_{\cyl})\,\psi_{+}$ is an involution in 
  $D^*(\RR_{\geq}\times \partial{W})^\Gamma/ \relD{\RR_{\geq}\times 
  \partial W}{\partial W}^\Gamma$, where we write briefly $\relD{\RR_{\geq}\times 
  \partial W}{\partial W}^\Gamma$ instead of $\relD{\RR_{\geq}\times 
  \partial W}{\{0\}\times \partial W}^\Gamma$.
Similarly, $\psi \,\chi (D_{\cyl})\,\psi$ is an involution in 
$D^*(W_\infty)^\Gamma/\relD{W_\infty}{W}^\Gamma$. 
\end{lemma}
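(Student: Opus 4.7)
The plan is to prove both parts by a direct algebraic calculation. The key preliminary input is that, thanks to the uniform positive scalar curvature on $\partial W$ (and hence on the infinite cylinder $\RR\times\partial W$), the operator $D_{\cyl}$ is uniformly $L^2$-invertible, so its spectrum is bounded away from $0$. One therefore chooses the chopping function $\chi$ so that $\chi\equiv\pm 1$ outside the spectral gap of $D_{\cyl}$; the resulting identity $\chi(D_{\cyl})^2=1$ holds exactly in $D^*(\RR\times\partial W)^\Gamma$, which is the favorable case of Proposition \ref{prop:vanish}. Note also that $\psi_+$ and $\psi$ are bounded real multiplication operators of zero propagation, pseudolocal (their commutators with any continuous function vanish), and $\Gamma$-invariant, so they lie in $D^*(\RR\times\partial W)^\Gamma$ and $D^*(W_\infty)^\Gamma$ respectively.

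For the first statement, I would expand
\begin{equation*}
(\psi_+\chi(D_{\cyl})\psi_+)^2 = \psi_+\chi(D_{\cyl})\psi_+\chi(D_{\cyl})\psi_+ = \psi_+\chi(D_{\cyl})^2\psi_+ - \psi_+\chi(D_{\cyl})[\chi(D_{\cyl}),\psi_+]\psi_+,
\end{equation*}
using only $\psi_+^2=\psi_+$ and $\psi_+\chi(D_{\cyl})=\chi(D_{\cyl})\psi_+-[\chi(D_{\cyl}),\psi_+]$. By the choice of $\chi$, the first term equals $\psi_+$, and by Lemma \ref{lem:compact_supp} the commutator $[\chi(D_{\cyl}),\psi_+]$ lies in the two-sided ideal $\relD{\RR\times\partial W}{\partial W}^\Gamma$, so the second term does as well. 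Compressing by $\psi_+$ to view everything on $L^2(\RR_{\geq}\times\partial W)$, the operator $\psi_+$ becomes the identity of $D^*(\RR_{\geq}\times\partial W)^\Gamma$ and the ideal $\relD{\RR\times\partial W}{\partial W}^\Gamma$ maps into $\relD{\RR_{\geq}\times\partial W}{\partial W}^\Gamma$, yielding the first statement.

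For the second statement the same identity gives
\begin{equation*}
(\psi\chi(D_{\cyl})\psi)^2 = \psi - \psi\chi(D_{\cyl})[\chi(D_{\cyl}),\psi]\psi,
\end{equation*}
and the commutator term lies in $\relD{W_\infty}{W}^\Gamma$ by the second half of Lemma \ref{lem:compact_supp}. Here however $\psi$ is \emph{not} the identity on $L^2(W_\infty)$, so I would add the observation that $1-\psi$, the characteristic function of $W\setminus\partial W$, is a zero-propagation, pseudolocal, $\Gamma$-invariant multiplication operator annihilated on both sides by any $\phi\in C_c(W_\infty)$ supported at positive distance from $W$; hence $1-\psi\in\relD{W_\infty}{W}^\Gamma$. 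Combining,
\begin{equation*}
(\psi\chi(D_{\cyl})\psi)^2 - 1 = -(1-\psi) - \psi\chi(D_{\cyl})[\chi(D_{\cyl}),\psi]\psi \;\in\; \relD{W_\infty}{W}^\Gamma.
\end{equation*}

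The main obstacle, which is largely a bookkeeping rather than conceptual matter, is giving $\psi\chi(D_{\cyl})\psi$ a precise meaning as an element of $D^*(W_\infty)^\Gamma$: $\chi(D_{\cyl})$ is a priori an operator on the infinite cylinder, not on $W_\infty$. This is resolved by using the product structure of $g$ near $\partial W$ to identify the cylindrical end of $W_\infty$ isometrically with $[0,\infty)\times\partial W\subset\RR\times\partial W$ and by checking that compression by $\psi$ on both sides sends the finite-propagation pseudolocal operator $\chi(D_{\cyl})$ to such an operator on $W_\infty$, so that the algebraic identities above indeed take place inside the relevant $C^*$-algebras.
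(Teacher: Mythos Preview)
Your proof is correct and follows essentially the same route as the paper: choose $\chi$ so that $\chi(D_{\cyl})^2=1$, expand $(\psi_+\chi(D_{\cyl})\psi_+)^2$ as $\psi_+ \pm$ a term involving the commutator $[\psi_+,\chi(D_{\cyl})]$, invoke Lemma~\ref{lem:compact_supp} for the commutator, and handle the discrepancy $1-\psi$ on $W_\infty$ by noting it is a zero-propagation operator supported in $W$ and hence lies in $\relD{W_\infty}{W}^\Gamma$. Your closing remark about making sense of $\psi\chi(D_{\cyl})\psi$ as an element of $D^*(W_\infty)^\Gamma$ via the isometric identification of the cylindrical end is a point the paper leaves implicit.
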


\begin{proof}
  We choose $\chi$ such that $\chi(D_{\cyl})^2=1$, this is possible because
  $0$ is not in the spectrum of $D_{\cyl}$ by the positive scalar curvature
  assumption. Then, using that $\psi_+^2=\psi_+$
  \begin{equation*}
    \begin{split}
      (\psi_+\chi(D_{\cyl})\psi_+)^2 &= \psi_+\chi(D_{\cyl})^2\psi_+ +
      \psi_+\chi(D_{\cyl})[\psi_+,\chi(D_{\cyl})]\psi_+\\
      &= 1 + (\psi_+-1) + \psi_+\chi(D_{\cyl})[\psi_+,\chi(D_{\cyl})]\psi_+.
    \end{split}
  \end{equation*}
  Observe that the second and the third operator are both in
  $D^*(W_\infty)^\Gamma$. Note that, on $[0,\infty)\times\boundary W$,
  $\psi_+-1=0$. On $W_\infty$, the corresponding $\psi-1$ is the negative
  of the characteristic 
  function of $W$, so has propagation $0$ and vanishes identically on
  $[0,\infty)\times \boundary W$, therefore $(\psi-1)\in
  \relD{W_\infty}{W}^\Gamma$. 
Using  Lemma \ref{lem:compact_supp}, we see that also the third
summand  belongs to $
  \relD{W_\infty}{ W}^\Gamma$ or $\relD{[0,\infty)\times\boundary W}{\boundary
    W}^\Gamma$, so the statement
  follows.
\end{proof}



\begin{definition}
Let 
$n+1$, the dimension of $W$, be even.\\ 
Consider  the half cylinder $\RR_{\geq} \times \boundary W$;
observe that  the spinor bundle is in this case the pull-back of the direct sum of two copies of the spinor bundle
  on $\boundary W$.  Thus, in this case, we could choose $U$ to
  be the identity. Using Lemma \ref{lem:involutions_in_quotient} we can define 
  the class 
 \begin{equation}\label{cut-index-cyl}
 [U^* (\psi_+ \chi(D_{\cyl})_+ \psi_+)]  \in
  K_1(D^* (\RR_{\geq}\times \boundary W)^\Gamma/ 
  \relD{\RR_{\geq}\times \boundary W}{\boundary W}^\Gamma)
  \end{equation}
  and thus, applying the boundary map
  for the obvious 6-terms long exact sequence
 \begin{equation}\label{bdry_map_for-cyl}
 \partial\colon  K_{1} (D^* ( \RR_{\geq}\times \partial{W})^\Gamma / \relD{\RR_{\geq}\times 
   \partial{W}}{\partial{W}}^\Gamma) 
\rightarrow K_{0} ( \relD{\RR_{\geq}\times 
   \partial{W}}{\partial{W}}^\Gamma),
\end{equation}
we obtain a class 
 \begin{equation}\label{cut-index-cyl-bis}
\pa [U^* (\psi_+ \chi(D_{\cyl})_+ \psi_+ )] \in K_{0} ( \relD{\RR_{\geq}\times 
   \partial{W}}{\partial{W}}^\Gamma).
\end{equation}
 Similarly, if $n+1$ is odd then 
  we  have a well defined class
  \begin{equation}\label{cut-index-cyl-bis-bis}
  \pa (\ha [1+\psi_+ \chi(D_{\cyl})\psi_+])\;\in \; K_1 (\relD{\RR_{\geq}\times 
   \partial{W}}{\partial{W}}^\Gamma)\end{equation} 
 \end{definition}
 
 \begin{remark}\label{remark-general2}
 There is a corresponding statement, obtained by replacing $\partial{W}$
 by a general $M$ with positive scalar curvature. In particular if $M$ is odd dimensional, 
 choosing  $U$ to be the identity, this gives classes
 \begin{equation}\label{cut-index-cyl-general} 
 [ \psi_+ \chi(D_{\cyl})_+ \psi_+ ]  \in
  K_1(D^* (\RR_{\geq}\times M)^\Gamma/ 
  \relD{\RR_{\geq}\times M}{M}^\Gamma)  \end{equation}
and 
 \begin{equation}\label{cut-index-cyl-bis-general}
\pa [ \psi_+ \chi(D_{\cyl})_+ \psi_+ ]  \in
  K_0 (  \relD{\RR_{\geq}\times M}{M}^\Gamma).
  \end{equation}
   \end{remark}
 
 \begin{remark}\label{remark:explicit_mv}
 We can restate Lemma \ref{lem:involutions_in_quotient}, and its obvious
 extensions in Remarks  \ref{remark-general2} and \ref{remark-general1} in a
 more conceptual way. Indeed,
 exactly the same proof establishes the following statements: \\
If $n+1$ is even then  {\it compression by $\chi_+$ gives a well
   defined homomorphism} 
 $$K_1 (D^* (\RR\times M)^\Gamma) \to 
 K_1 (D^* (\RR_{\geq}\times M)^\Gamma / 
 \relD{\RR_{\geq}\times M}{M})^\Gamma) $$
  {\it which sends the $\rho$-class defined by $D_{\RR\times M}$ to the class \eqref{cut-index-cyl-general}}.
 
  \smallskip
  \noindent
 Therefore, composition of this homomorphism with the boundary map 
  \begin{equation*}
 \partial\colon  K_{1} (D^* ( \RR_{\geq}\times M)^\Gamma / \relD{\RR_{\geq}\times 
 M}{M}^\Gamma) 
\rightarrow K_{0} ( \relD{\RR_{\geq}\times 
   M}{M}^\Gamma)
\end{equation*}
  gives a homomorphism 
 $$H\colon K_1 (D^* (\RR\times  M)^\Gamma)
 \to K_{0} ( \relD{\RR_{\geq}\times M}{M}^\Gamma)\,.$$
 Further composing with the inverse of the isomorphism 
 $$ j_{M}:  K_{0} ( D^*(M)^\Gamma)\to K_{0} ( \relD{\RR_{\geq}\times M}{M}^\Gamma)$$
 induced by the inclusion (it is the the analogue of \eqref{eq:defjs})
 gives finally  a well defined homomorphism:
 \begin{equation}\label{altervative_mv}
 K_1 (D^* (\RR\times M)^\Gamma)\to K_{0} ( D^*(M)^\Gamma).
 \end{equation}
 and this homomorphism  sends  $\rho (D_{\RR\times M})$
 into $j_{M}^{-1} \pa [ \psi_+ \chi(D_{\cyl})_+ \psi_+ ] $.
 It is not difficult to show, proceeding exactly as in Remark \ref{rem:description_of_delta_MV}, that the
 homomorphism \eqref{altervative_mv} is precisely the Mayer-Vietoris homomorphism $\delta_{{\rm MV}}$
 we have described in Subsection \ref{subsect:mv-for-cylinder}; in particular, by the argument given
 in Subsection \ref{subsect:mv-for-cylinder},  the homomorphism 
 \eqref{altervative_mv} is an isomorphism. Moreover, by the above remarks, the following identity
 holds in $K_{0} ( D^*(M)^\Gamma)$:
 \begin{equation}\label{fundamental-partial}
 \delta_{{\rm MV}} (\rho (D_{\RR\times M}))= j_{M}^{-1} \pa [ \psi_+ \chi(D_{\cyl})_+ \psi_+ ] 
 \end{equation}
The corresponding statement holds in the odd dimensional case.
 \end{remark}

We now go back to the manifold with cylindrical ends $W_\infty$.
Then, by the second part of Lemma \ref{lem:involutions_in_quotient} we have, in the even dimensional
case,
\begin{equation*}
[U^* (\psi \,\chi (D_{\cyl})_+\,\psi)]\in K_{1} (  D^* (W_\infty)^\Gamma / \relD{W_\infty}{W}^\Gamma) 
\end{equation*}
and thus applying the boundary map
 $$\partial\colon K_{1} (  
D^* (W_\infty)^\Gamma / \relD{W_\infty}{W}^\Gamma)\to
 K_{0} ( \relD{W_\infty}{W}^\Gamma )$$ 
 we  obtain  an element
\begin{equation}\label{cut-index-cyl-end}
\partial [U^* (\psi \,\chi (D_{\cyl})_+\,\psi)]\in K_{0} ( \relD{W_\infty}{W}^\Gamma ).
\end{equation}
In the odd dimensional case we have a corresponding class
\begin{equation}\label{cut-index-cyl-end-bis}   
\pa (\ha [1+\psi \chi(D_{\cyl})\psi])\;\in\; K_{1} ( \relD{W_\infty}{W}^\Gamma ).
\end{equation}



   
\begin{lemma}\label{lem:subtract}
  $\chi(D)-\psi\chi(D_{\cyl})\psi \in \relD{W_\infty}{W}^\Gamma$.
\end{lemma}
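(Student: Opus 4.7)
The plan is to exploit finite propagation speed together with the fact that the Dirac operators $D$ and $D_{\cyl}$ coincide as differential operators on the cylindrical end. I first reduce to chopping functions $\chi$ with $\hat\chi$ supported in a compact interval $[-R_0,R_0]$: such $\chi$ are sup-norm dense among all chopping functions, the bounded functional calculus is norm-continuous, and $\relD{W_\infty}{W}^\Gamma$ is closed, so this case implies the general one. Under this assumption, both $\chi(D)$ on $W_\infty$ and $\chi(D_{\cyl})$ on $\RR\times\partial W$ have propagation $\leq R_0$ by the usual wave-equation argument.

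Next I would use the identity
\begin{equation*}
T = \chi(D)(1-\psi) + (1-\psi)\chi(D)\psi + \psi\bigl(\chi(D) - \chi(D_{\cyl})\bigr)\psi,
\end{equation*}
where $\chi(D_{\cyl})$ is transported to an operator on $L^2(W_\infty)$ via the isometric identification of the cylindrical end of $W_\infty$ with $[0,\infty)\times\partial W\subset\RR\times\partial W$. For the first two summands: $(1-\psi) = \chi_W$ commutes with every multiplication operator $\phi\in C_c(W_\infty)$, so the commutator $[(1-\psi)\chi(D),\phi] = (1-\psi)[\chi(D),\phi]$ is compact, giving pseudolocality; finite propagation is inherited from $\chi(D)$; and the factor of $(1-\psi)$ combined with finite propagation confines these operators to an $R_0$-neighborhood of $W$. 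For the third summand, the crucial geometric input is that on $[0,\infty)\times\partial W$ the operators $D$ and $D_{\cyl}$ agree as differential operators, so by finite propagation $\chi(D)s = \chi(D_{\cyl})s$ for any section $s$ supported in $\{t\geq R_0\}$; this makes $\psi(\chi(D)-\chi(D_{\cyl}))\psi$ vanish after multiplication on either side by functions with support at distance greater than $2R_0$ from $W$.

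The delicate point, which I expect to be the main obstacle, is verifying that $\psi\chi(D_{\cyl})\psi$ is itself pseudolocal on $W_\infty$, given that $\psi$ is discontinuous at $\partial W$. I would handle this via the finite-propagation form of Kasparov's lemma (Lemma \ref{lem:kasparov}): for $\phi_1,\phi_2\in C_c(W_\infty)$ with disjoint compact supports, I decompose $\phi_j = \phi_j^{\rm int} + \phi_j^{\rm cyl}$ where $\phi_j^{\rm int}$ has support in $W^\circ$ (so is annihilated by $\psi$) and $\phi_j^{\rm cyl}$ has support in $(0,\infty)\times\partial W$ (where $\psi=1$, and it is genuinely continuous after extension by zero to $\RR\times\partial W$). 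The pseudolocality then reduces to the known pseudolocality of $\chi(D_{\cyl})$, using finite propagation to deal with the boundary terms. Once this is in place, all three summands lie in $\relD{W_\infty}{W}^\Gamma$ and the lemma follows.
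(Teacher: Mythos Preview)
Your approach is sound and genuinely different from the paper's. The paper does \emph{not} reduce to $\hat\chi$ compactly supported; instead it splits $\hat\chi=\alpha+\beta$ with $\alpha$ smooth and rapidly decreasing and $\beta$ a compactly supported distribution, and then verifies two conditions (local compactness and norm-decay of $(\chi(D)-\psi\chi(D_{\cyl})\psi)\phi$ as $\supp\phi$ moves away from $W$) which, by an external characterization due to Roe, suffice for membership in $\relD{W_\infty}{W}^\Gamma$. Your argument is more self-contained: the density reduction is correct, and the three-term decomposition together with the finite-propagation analysis handles the support condition cleanly.

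There is, however, one genuine gap in your treatment of pseudolocality. The decomposition $\phi_j=\phi_j^{\rm int}+\phi_j^{\rm cyl}$ with $\supp\phi_j^{\rm int}\subset W^\circ$ and $\supp\phi_j^{\rm cyl}\subset(0,\infty)\times\partial W$ does not exist when $\phi_j$ is nonzero somewhere on $\partial W$: the two open sets $W^\circ$ and $(0,\infty)\times\partial W$ together miss $\partial W$, so any continuous function supported in their union must vanish there. Since $\phi_1,\phi_2$ with disjoint supports may both meet $\partial W$ (at different points), this case cannot be avoided. The fix is simple and in the spirit of what you wrote: the (possibly discontinuous) functions $\phi_j\psi$, transported to $\RR\times\partial W$ by extension by zero, still have disjoint compact supports there; choose continuous $\eta_j\in C_c(\RR\times\partial W)$ with disjoint supports and $\eta_j\equiv 1$ on $\supp(\phi_j\psi)$. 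Then $\phi_1\,\psi\chi(D_{\cyl})\psi\,\phi_2$ factors through $\eta_1\chi(D_{\cyl})\eta_2$, which is compact by the usual Kasparov lemma applied to the pseudolocal operator $\chi(D_{\cyl})$ on $\RR\times\partial W$. With this adjustment your proof goes through.
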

\begin{proof}
  This is a consequence of the proof of \cite[Proposition
  1.5]{roe-relative}. We have to show two things:
  \begin{enumerate}
  \item\label{item:compact} if $\phi\in C_0(W_\infty)$ with $d(W,\supp(\phi))>\epsilon>0$ then
    $(\chi(D)-\psi \chi(D_{\cyl}\psi))\phi$ is compact
  \item\label{item:propa} For each $\epsilon>0$ there is $R>0$ such that
    $\norm{(\chi(D)-\psi 
      \chi(D_{\cyl}\psi))\phi} <\epsilon$ whenever $\phi\in C_0(W_\infty)$ with $d(W,\supp(\phi))>R$.
  \end{enumerate}

Write the distribution $\hat\chi(t)=\alpha(t)+\beta(t)$ such that $\alpha$ is
smooth and rapidly decreasing and $\beta$ is a distribution supported in
$(-\epsilon,\epsilon)$. Then $\chi(D) = \int_{\reals} \alpha(t) e^{itD} +
\int_{-\epsilon}^\epsilon \beta(t) e^{itD}$. By unit propagation and isometry
invariance of the wave operator, $e^{itD}\phi = e^{itD_{\cyl}}\phi$ for $t\le
R$ if $d(W,\supp(\phi))>R$.
Therefore, in this situation
\begin{equation*}
(  \chi(D)-\psi\chi(D_{\cyl}))\phi = \int_{\abs{t}\ge R} \alpha(t)
\left(e^{itD}-\psi e^{it D_{\cyl}}\right)\phi \qquad\implies\quad \norm{(
    \chi(D)-\psi\chi(D_{\cyl}))\phi} \le 2\sup_{\abs{t}\ge R} \abs{\alpha(t)}.
\end{equation*}

As $\alpha$ is smooth and rapidly decreasing, the operator belongs to
$C^*(W_\infty)^\Gamma$ and therefore is compact for each $R>0$, and the norm
converges to $0$ as $R\to \infty$. This establishes the two properties.
\end{proof}


\begin{lemma}
Let $\iota_* \colon K_*  (\relC{W_\infty}{W}^\Gamma )
\longrightarrow K_*  (\relD{W_\infty}{W}^\Gamma )$ be the homomorphism induced
by the inclusion (we considered $\iota_*$ in Lemma
\ref{def:iota}). If $n+1$ is even then
\begin{equation}\label{iota-pa-1}
\iota_* ( \Ind^{\rm rel} (D))= \partial [U^* (\psi \,\chi (D_{\cyl})_+\,\psi)] \text{ in }
K_{0}  (\relD{W_\infty}{W}^\Gamma)) \;.
\end{equation}
If $n+1$ is odd, then 
\begin{equation}\label{iota-pa-1-bis}
\iota_* ( \Ind^{\rm rel} (D))= \partial [\ha (1+ \psi \,\chi (D_{\cyl})\,\psi)] \text{ in }
K_{1}  (\relD{W_\infty}{W}^\Gamma ) \;.
\end{equation}
\end{lemma}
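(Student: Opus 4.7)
The identity is a direct consequence of naturality of the K-theory boundary map together with Lemma~\ref{lem:subtract}. The inclusion $\iota\colon \relC{W_\infty}{W}^\Gamma\hookrightarrow \relD{W_\infty}{W}^\Gamma$ of ideals of $D^*(W_\infty)^\Gamma$ yields a commutative diagram of short exact sequences
\begin{equation*}
\begin{CD}
0 @>>> \relC{W_\infty}{W}^\Gamma @>>> D^*(W_\infty)^\Gamma @>>> D^*(W_\infty)^\Gamma/\relC{W_\infty}{W}^\Gamma @>>> 0\\
@. @VV{\iota}V @| @VV{\pi}V @.\\
0 @>>> \relD{W_\infty}{W}^\Gamma @>>> D^*(W_\infty)^\Gamma @>>> D^*(W_\infty)^\Gamma/\relD{W_\infty}{W}^\Gamma @>>> 0
\end{CD}
\end{equation*}
so that the associated connecting maps $\partial_\relC$ and $\partial_\relD$ fit into $\iota_*\circ\partial_\relC=\partial_\relD\circ\pi_*$.

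By Definition~\ref{def:coarse_ind} (compare \eqref{coarse-class}), in the even case $\Ind^{\rm rel}(D)=\partial_\relC [U^*\chi(D)_+]$, so that
\begin{equation*}
\iota_*\Ind^{\rm rel}(D) \;=\; \partial_\relD\,\pi_*[U^*\chi(D)_+] \quad\text{in}\quad K_0(\relD{W_\infty}{W}^\Gamma);
\end{equation*}
the analogous formula with $\frac12(1+\chi(D))$ and $\partial_\relD$ landing in $K_1$ holds in the odd case.

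It remains to identify $\pi_*[U^*\chi(D)_+]$ with $[U^*(\psi\,\chi(D_{\cyl})_+\,\psi)]$ in $K_1(D^*(W_\infty)^\Gamma/\relD{W_\infty}{W}^\Gamma)$ (and similarly for the projector in the odd case). By Lemma~\ref{lem:subtract} the difference $\chi(D)-\psi\,\chi(D_{\cyl})\,\psi$ lies in $\relD{W_\infty}{W}^\Gamma$; since the $\Gamma$-equivariant grading of the spinor bundle induces a decomposition of $\relD{W_\infty}{W}^\Gamma$ preserved by left and right multiplication with bounded operators, the same holds after extracting the $+$-component and after left multiplication by the isometry $U^*$ (which may be chosen cylindrical near the end, so that it coincides with the $U$ used in the definition of $[U^*(\psi\chi(D_{\cyl})_+\psi)]$). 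Hence the two involutions coincide in the quotient $D^*(W_\infty)^\Gamma/\relD{W_\infty}{W}^\Gamma$, their $K_1$-classes agree, and the identity \eqref{iota-pa-1} follows. The odd-dimensional identity \eqref{iota-pa-1-bis} is obtained in exactly the same way, applying $\pi_*$ to the projection $\tfrac12(1+\chi(D))$ representing $[D]\in K_0(D^*(W_\infty)^\Gamma/\relC{W_\infty}{W}^\Gamma)$.

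The only potential obstacle is the bookkeeping around the choice of the bundle isometry $U$ and the off-diagonal piece $(\cdot)_+$, but this reduces to the observation that $\relD{W_\infty}{W}^\Gamma$ is a two-sided ideal that respects the grading, so that membership in the ideal passes from $\chi(D)-\psi\chi(D_{\cyl})\psi$ to $U^*\chi(D)_+-U^*(\psi\chi(D_{\cyl})\psi)_+$ with no further work.
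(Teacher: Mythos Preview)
Your proof is correct and follows essentially the same approach as the paper: both use naturality of the boundary map for the inclusion of ideals $\relC{W_\infty}{W}^\Gamma\subset\relD{W_\infty}{W}^\Gamma$ in $D^*(W_\infty)^\Gamma$, and then invoke Lemma~\ref{lem:subtract} to identify the two classes in the quotient. Your write-up is simply more explicit about the commutative diagram and the bookkeeping around the grading and the isometry $U$ (one small slip: in the even case the elements $U^*\chi(D)_+$ and $U^*(\psi\chi(D_{\cyl})_+\psi)$ are unitaries in the quotient, not involutions).
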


\begin{proof}
 Let us prove the case in which $n+1$ is even.
 Recall the inclusion of the ideal $\relC{W_\infty}{W}^\Gamma\subset \relD{W_\infty}{W}^\Gamma $.
 Using an obvious commutative diagram we see that the left hand side
 of \eqref{iota-pa-1} is nothing but the boundary map
 applied to the involution
 \begin{equation*}
[U^*\chi(D)_+)] \in K_1(D^*(W_\infty)^\Gamma
 /\relD{W_\infty}{W}^\Gamma)).
\end{equation*}
The lemma 
 follows immediately from Lemma \ref{lem:subtract}. The odd  case is similar.
\end{proof}


For the next lemma and the following proposition recall the homomorphisms
$j_+$ and $j_{\pa}$ appearing in Proposition \ref{def:iota}. 
\begin{lemma}
 If $n+1$ is even then 
\begin{equation}\label{iota-pa-3}
\partial [U^* (\psi \,\chi (D_{\cyl})_+\,\psi)] = j_+ (\pa [U^* (\psi_+ \chi(D_{\cyl})_+ \psi_+)]) \text{ in } K_{n+1}  
(  \relD{W_\infty}{W}^\Gamma  ) .
  \end{equation}
  If $n+1$ is odd
  \begin{equation}\label{iota-pa-3-bis}
\partial [\ha (1+ \psi \,\chi (D_{\cyl})\,\psi)] = j_+ (\pa [\ha (1 + \psi_+
\chi(D_{\cyl}) \psi_+)]) \text{ in } K_{n+1}  (  \relD{W_\infty}{W}^\Gamma  ) .
  \end{equation}
\end{lemma}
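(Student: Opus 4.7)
The plan is to derive both identities from naturality of the boundary homomorphism in K-theory, applied to a morphism of short exact sequences of $C^*$-algebras induced by the natural isometric embedding $\iota_+\colon [0,\infty)\times \partial W \hookrightarrow W_\infty$ of the cylindrical end (which identifies the source with the cylindrical part of $W_\infty$ on the nose). Concretely, I would choose an isometry covering $\iota_+$ in the $D^*$-sense (Definition \ref{def:cover-D*-sense}), compatible with $\Gamma$ and with the spinor bundles, which is always possible by \cite[Lemma 7.7]{hr-novikov}; one can moreover arrange for it to be compatible with the $U\colon L^2(S_-)\to L^2(S_+)$ used on both sides, because on the cylinder the spinor bundle is a pullback and $U$ can be taken to be the identity. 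Conjugation by this isometry sends $D^*([0,\infty)\times \partial W)^\Gamma$ into $D^*(W_\infty)^\Gamma$ and maps the ideal $\relD{[0,\infty)\times \partial W}{\partial W}^\Gamma$ into $\relD{W_\infty}{W}^\Gamma$, since $\iota_+$ sends $\partial W$ to $W$ and preserves the relevant metric-neighborhood condition. By Definition \ref{def:iota}, the induced map on K-theory of the ideals is precisely $j_+$.

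This yields a commutative ladder of short exact sequences, and by naturality of the connecting map the square
\begin{equation*}
  \begin{CD}
    K_{*}(Q_+) @>{\partial}>> K_{*-1}(\relD{[0,\infty)\times \partial W}{\partial W}^\Gamma)\\
    @VVV @VV{j_+}V\\
    K_{*}(Q_\infty) @>{\partial}>> K_{*-1}(\relD{W_\infty}{W}^\Gamma)
  \end{CD}
\end{equation*}
commutes, with $Q_+$ and $Q_\infty$ the respective quotient algebras. To finish, I would check that the left vertical map sends $[U^*(\psi_+\chi(D_{\cyl})_+\psi_+)]$ to $[U^*(\psi\chi(D_{\cyl})_+\psi)]$ (and in the odd case $[\tfrac{1}{2}(1+\psi_+\chi(D_{\cyl})\psi_+)]$ to $[\tfrac{1}{2}(1+\psi\chi(D_{\cyl})\psi)]$). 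At the operator level this is an \emph{equality} modulo $\relD{W_\infty}{W}^\Gamma$: the characteristic function $\psi$ on $W_\infty$ pulls back to $\psi_+$ on the half-cylinder, and the functional calculus $\chi(D_{\cyl})$ is transported under $\iota_+$ to an operator that differs from the intended $\chi(D_{\cyl})$ on the cylindrical end only by operators supported in a bounded neighborhood of $\partial W$. This last assertion is proved exactly by the finite-propagation-speed argument already used for Lemma \ref{lem:subtract}, writing the Fourier transform of $\chi$ as a sum of a rapidly decreasing piece and a compactly supported piece, and using unit-speed wave propagation to see that the two functional calculi agree on sections far from the boundary. Combining the naturality square with this identification gives \eqref{iota-pa-3}, and the odd-dimensional formula \eqref{iota-pa-3-bis} follows by the identical argument with projections replacing unitaries.

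The main obstacle is the operator-level identification of $(\iota_+)_*\bigl(\psi_+\chi(D_{\cyl})_+\psi_+\bigr)$ with $\psi\chi(D_{\cyl})_+\psi$ on $W_\infty$ modulo the ideal, because the expression ``$\chi(D_{\cyl})$ on $W_\infty$'' is not a priori well-defined (the Dirac operator of $W_\infty$ is \emph{not} $D_{\cyl}$) and is only meaningful through the compression by $\psi$. One must verify that the two a priori different meanings of $\psi\chi(D_{\cyl})\psi$ — namely, the image under $(\iota_+)_*$ of the compression on the half-cylinder versus the compression of $\chi(D_{\cyl})$ regarded as an operator on the cylindrical end of $W_\infty$ — agree modulo $\relD{W_\infty}{W}^\Gamma$. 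Once this is in place, the lemma is a straightforward consequence of the functoriality of the six-term exact sequence.
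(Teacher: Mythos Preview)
Your approach is correct and follows the same route as the paper: use naturality of the boundary map for the morphism of extensions induced by the inclusion $[0,\infty)\times\partial W\hookrightarrow W_\infty$, and check that the induced map on quotients carries the half-cylinder class to the $W_\infty$ class. The paper's proof, however, is considerably shorter because it avoids the ``obstacle'' you describe entirely. The key observation you are missing is that the adequate module for $[0,\infty)\times\partial W$ (namely $L^2$ of the spinor bundle) is literally a direct summand of the adequate module for $W_\infty$, so one may take the covering isometry $V$ to be the inclusion of this summand. With this choice, conjugation by $V$ sends $\psi_+\chi(D_{\cyl})_+\psi_+$ \emph{exactly} to $\psi\chi(D_{\cyl})_+\psi$, not merely modulo the ideal: the symbol $\psi\chi(D_{\cyl})\psi$ on $W_\infty$ means, by definition, the operator on the cylindrical end extended by zero on $L^2(W)$, which is precisely $V(\psi_+\chi(D_{\cyl})\psi_+)V^*$. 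There are not ``two a priori different meanings'' to reconcile, and no finite-propagation argument in the style of Lemma~\ref{lem:subtract} is needed here. Your proposed propagation argument would work, but it is doing nothing.
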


\begin{proof}
We only prove \eqref{iota-pa-3}, the other statements can be derived
similarly. We choose as adequate modules on $\RR_{\geq} \times \pa W$ and on
$W_\infty$ the 
$L^2$-sections of the corresponding spinor bundles. Observe now that the
adequate module  
for $\RR_{\geq} \times \pa W$ is a direct summand of the one for $W_\infty$.
Thus, we can choose the isometry $V$ covering the inclusion $\RR_{\geq} \times \pa W
\hookrightarrow W_\infty$ to be simply given by the inclusion of the first module
as a direct summand into the second.
Then, by definition, $ j_+ ([U^* (\psi_+ \chi(D_{\cyl})_+ \psi_+)])= [U^* (\psi \,\chi (D_{\cyl})_+\,\psi)]$
and since $j_+$ commutes with the boundary map, we are done.
\end{proof}
Notice that we can choose $U$ to induce the identity on the cylindrical end,
where the positive and the negative spinor bundles are both the pullback of
the spinor bundle on $\boundary W$. Then the above
identity reads 
\begin{equation}\label{iota-pa-3-noU}
\partial [U^* (\psi \,\chi (D_{\cyl})_+\,\psi)] = j_+ (\pa [ \psi_+ \chi(D_{\cyl})_+ \psi_+ ])
\end{equation}

Finally, we have the crucial 

\begin{proposition}
  \label{proposition:crucial}
If $n+1$ is even then
\begin{equation}\label{iota-pa-2}
\pa [ \psi_+ \chi(D_{\cyl})_+ \psi_+]=j_\partial\, \rho(D_\partial)  \text{ in } 
K_{n+1} ( \relD{\RR_{\geq}\times
\partial{W}}{\partial{W}}^\Gamma ).
\end{equation}
\end{proposition}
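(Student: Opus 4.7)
The first step in my plan is to observe that Proposition~\ref{proposition:crucial} is just the cylinder delocalized index theorem (Theorem~\ref{theo:fundamental-cylinder}) in disguise, applied with $M=\partial W$. Indeed, formula~\eqref{fundamental-partial} in Remark~\ref{remark:explicit_mv} identifies $j_{M}^{-1}\partial[\psi_{+}\chi(D_{\cyl})_{+}\psi_{+}]$ with $\delta_{{\rm MV}}(\rho(D_{\RR\times M}))$, so the proposition is equivalent to the identity $\delta_{{\rm MV}}(\rho(D_{\RR\times M}))=\rho(D_{M})$. The substantive work therefore lies in proving the cylinder identity; the proposition itself then follows by unwinding the definitions.

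For the cylinder theorem my plan is to carry out a direct computation exploiting the product structure and the spectral gap of $D_{M}$. First, I would pick an odd chopping function $\chi$ with $\chi^{2}\equiv 1$ on $\sigma(D_{M})\cup\sigma(D_{\cyl})$; this is possible because positive scalar curvature gives a uniform lower bound $\delta>0$ for $|D_{M}|$, and since $D_{\cyl}^{2}=-\partial_{t}^{2}+D_{M}^{2}\geq D_{M}^{2}$ on the cylinder, one has $|D_{\cyl}|\geq\delta$ as well. Then both $\chi(D_{M})$ and $\chi(D_{\cyl})$ are genuine involutions in the relevant $D^{*}$-algebras. Second, identifying $S_{\cyl}^{\pm}\cong\pi^{*}S_{M}$, one has $D_{\cyl}^{+}=\partial_{t}+D_{M}$; combined with the spectral decomposition of $D_{M}$ into its positive and negative spectral parts, this yields a workable expression for $\chi(D_{\cyl})_{+}$. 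Third, compress by $\psi_{+}$ and trace the resulting K-theory class through the boundary map
\[
\partial\colon K_{1}\bigl(D^{*}(\RR_{\geq}\times M)^{\Gamma}/\relD{\RR_{\geq}\times M}{M}^{\Gamma}\bigr)\to K_{0}\bigl(\relD{\RR_{\geq}\times M}{M}^{\Gamma}\bigr),
\]
matching its image with $j_{M}\rho(D_{M})$.

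A conceptually cleaner variant would be to exhibit $\rho(D_{\cyl})$ as an external product of a Bott generator for the $\RR$-factor with $\rho(D_{M})$, and to recognise the Mayer--Vietoris map for the decomposition $\RR=\RR_{\le 0}\cup\RR_{\ge 0}$ as (the tensor product with the identity of) the standard suspension isomorphism. The desired equality would then be formal. This trade-off replaces an explicit cylinder computation with the construction of external products in the Higson--Roe framework, which is itself non-trivial in this setting.

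The main obstacle, in the direct approach, is the explicit description of $\psi_{+}\chi(D_{\cyl})_{+}\psi_{+}$ modulo $\relD{\RR_{\geq}\times M}{M}^{\Gamma}$, together with the identification of its boundary class with $j_{M}\rho(D_{M})$. Compression by $\psi_{+}$ produces Toeplitz/Calder\'on-type boundary phenomena, so one cannot simply read off the boundary map from the formula. The strategy I would adopt is to construct a homotopy of involutions inside $D^{*}(\RR_{\geq}\times M)^{\Gamma}/\relD{\RR_{\geq}\times M}{M}^{\Gamma}$ deforming $\psi_{+}\chi(D_{\cyl})_{+}\psi_{+}$ to a separable model built from $\chi(D_{M})$ and a simple one-dimensional operator on $\RR$, so that the resulting boundary class becomes manifestly $j_{M}\rho(D_{M})$. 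Controlling this deformation while preserving the quotient-invertibility, and then performing the final matching, will be the technical heart of Subsection~\ref{sec:proof_cylindercase}.
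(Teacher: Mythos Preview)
Your reduction of Proposition~\ref{proposition:crucial} to Theorem~\ref{theo:fundamental-cylinder} via \eqref{fundamental-partial} is exactly the paper's proof of this proposition.

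Your outline for the cylinder theorem itself is in the right spirit but differs from the paper's execution in its key technical device. You propose deforming $\psi_{+}\chi(D_{\cyl})_{+}\psi_{+}$ to a ``separable model built from $\chi(D_{M})$ and a simple one-dimensional operator''; the paper does not do this. Instead it deforms $\chi(D_{\cyl})_{+}=(D+\partial_{t})/\sqrt{D^{2}-\partial_{t}^{2}}$ along a linear path of invertibles to $(|D|+\partial_{t})/(D-\partial_{t})$, an operator with no tensor-product structure. The virtue of this specific choice is that, after the spectral transform diagonalising $D$, the compression $\psi_{+}(|D|+\partial_{t})(D-\partial_{t})^{-1}\psi_{+}$ becomes a family (over $\lambda\in\sigma(D)$) of Toeplitz-type operators on $L^{2}[0,\infty)$ analysable via Hardy-space projections; one then exhibits an explicit one-sided inverse $Q=\psi_{+}(|D|-\partial_{t})(D+\partial_{t})^{-1}\psi_{+}$ whose defect $\Id-Q\circ(\psi_{+}\cdot\psi_{+})$ is computed directly to be the projector $P=V\chi_{[0,\infty)}(D)V^{*}$, where $V$ is the concrete isometry $(Vs)(t)=\sqrt{2|D|}e^{-t|D|}s$ that the paper builds to implement $j_{M}$. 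This reads off $\partial[\psi_{+}\chi(D_{\cyl})_{+}\psi_{+}]=[P]=j_{M}\rho(D_{M})$ directly from the standard description of the boundary map. Your external-product alternative is the route taken by Xie--Yu, as the paper notes. (A small terminological slip: in the even-dimensional case $\psi_{+}\chi(D_{\cyl})_{+}\psi_{+}$ is an invertible modulo the ideal, not an involution; the homotopy is through invertibles and lives in $K_{1}$.)
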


\begin{proof}
  We apply $j_\partial^{-1}$ to both sides.
Using \eqref{fundamental-partial} for  $M=\pa W$ we see that
\eqref{iota-pa-2} is equivalent to
\begin{equation}\label{partioned-rho-cylinder-0}
\delta_{{\rm MV}} (\rho (D_{\RR\times \pa W}))=  \rho(D_{\pa W}) \text{ in } K_{n+1} ( D^* (\pa W)^\Gamma)
\end{equation}
which is precisely the content of Theorem \ref{theo:fundamental-cylinder}
(this is the cylinder delocalized index theorem).
\end{proof}

\begin{remark}
  Of course, we expect that the corresponding formula to
  Equation \eqref{iota-pa-2} holds if $n+1$ is odd, namely
\begin{equation}\label{iota-pa-2-bis}
\pa [\ha (1+ \psi_+ \chi(D_{\cyl}) \psi_+)]=j_\partial\, \rho(D_\partial)  \text{ in } K_{n+1} ( \relD{\RR_{\geq}\times
\partial{W}}{\partial{W}}^\Gamma ).
\end{equation}
\end{remark}

\smallskip
\noindent
{\bf Proof of Theorem \ref{theo:k-theory-deloc}}\\
We can finally give the proof of Theorem \ref{theo:k-theory-deloc}.
 Indeed, if $n+1$ is even then
from \eqref{iota-pa-1}, \eqref{iota-pa-3-noU}, \eqref{iota-pa-2} we obtain at once
 \begin{equation*}
\iota_* ( \Ind^{\rm rel} (D))= \partial [U^*(\psi \chi (D_{\cyl})_+ \psi)= j_+ (\partial [ \psi_+ \chi (D_{\cyl})_+ \psi_+ ])
=j_+ (j_\partial\, \rho(D_\partial) ).
\end{equation*}
Applying $c^{-1}$ and using the commutativity of \eqref{eq:defjs} we get
precisely what we have to show.



\subsection{Proof of the partitioned manifold theorem for $\rho$-classes
assuming Theorem \ref{theo:fundamental-cylinder}
}\label{reduction-partition}

In this subsection we show how to prove Theorem \ref{theo:part_mf_abst}
assuming the cylinder delocalized index theorem
\ref{theo:fundamental-cylinder}, namely that
\begin{equation*}\delta_{{\rm MV}} (\rho(D_{\RR\times M}))= \rho (D_M) \quad\text{in}\quad K_0 (D^* (M)^\Gamma).
\end{equation*}

Consider $(W,g)$, an $(n+1)$-dimensional Riemannian manifold 
with uniformly positive scalar curvature metric $g$,
partitioned by a two-sided hypersurface $M$, $W=W_- \cup_M W_+$, with product
structure near $M$ and with signed distance function $f\colon W\to \reals$.
We also
assume an isometric action of $\Gamma$, preserving $M$
and with the property that $M/\Gamma$ is compact. There is then a  resulting $\Gamma$-map
$u\colon M\to E\Gamma$.
We have defined in Subsection \ref{subsect:intro-partitioned} the partitioned manifold 
$\rho$-class $\rho^{{\rm pm}}(g)\in K_n (D^* (M)^\Gamma)$ and the partitioned manifold 
$\rho_\Gamma$-class $\rho_\Gamma^{{\rm pm}}(g)\in
K_n (D^*_\Gamma)$. 
We shall also employ the notation $\rho_\Gamma^{{\rm pm}}(D_W)$ for this class.

Recall that our goal is to show that 
\begin{equation}\label{restate-pm}
\rho_\Gamma^{{\rm pm}}(g)= \rho_\Gamma(g_M)\quad\text{in}\quad 
K_n (D^*_\Gamma)
\end{equation}
We first show that the left hand side is unchanged if we replace $W$
by $ \reals\times M$.
Notice that our proof only applies to $\rho_\Gamma^{{\rm pm}}(g)\in
K_n (D^*_\Gamma)$; it does not apply to $\rho^{{\rm pm}}(g)\in K_n (D^* (M)^\Gamma)$.

First of all, we extend our discussion in Remark \ref{rem:description_of_delta_MV}
and give a more detailed description of $\delta_{{\rm MV}} [D_W]$, 
with $\delta_{{\rm MV}}$ the Mayer-Vietoris boundary homomorphism  associated
to the partition $W=W_- \cup_M W_+$. To this end 
we recall that $\delta_{{\rm MV}}\colon K_n (D^* (W)^\Gamma)\to K_{n+1}
(D^*(M)^\Gamma)$  
is obtained by composing
\begin{equation*}
\begin{split}K_n (D^* (W)^\Gamma)\to K_n (D^* (W)^\Gamma/ \relD{W}{W_-}^\Gamma)\simeq\\
 K_n (\relD{W}{W_+}^\Gamma)/\relD{W}{M}^\Gamma) \xrightarrow{\pa} K_{n+1}
 (\relD{W}{M}^\Gamma)\xrightarrow[\iso]{c^{-1}} K_{n+1}(D^*(M)^\Gamma)
 \end{split}\end{equation*}
Let $\chi_{\pm}$ be the characteristic
functions of $W_{\pm}$. By writing an element $x$ in $K_n (D^* (W)^\Gamma)$ as $\chi_+ x \chi_+ + \chi_- x \chi_- + R$,
with $R\in \relD{W}{M}^\Gamma$, we see first of all that $\delta_{{\rm MV}} (\rho (D_W))$,
with $\rho (D_W)\in K_n (D^* (W)^\Gamma)$,
is equal to $c^{-1}\pa  [\chi_+ \rho (D_W)  \chi_+]$ where $\pa $ is equal to the
connecting homomorphism
for the ideal $\relD{W}{M}^\Gamma $ in $\relD{W}{W_+}^\Gamma$.
 Observe now that there is an isomorphism
of algebras 
\begin{equation}\label{iso-part}
 \relD{W}{W_+}^\Gamma/\relD{W}{M}^\Gamma \xleftarrow{\alpha}  D^* (W_+)^\Gamma/\relD{W_+}{M}^\Gamma
 \end{equation}
and that the lift of 
  $[\chi_+ \rho  (D_W)  \chi_+] \in  \relD{W}{W_+}^\Gamma/\relD{W}{M}^\Gamma$
   through  $\alpha$ is the class $[\chi_+ \rho  (D_W)  \chi_+]\in D^* (W_+)^\Gamma/\relD{W_+}{M}^\Gamma$,
   i.e.~the same element seen in a different algebra. Of course this correspondence will hold also
   for the associated K-theory elements.
Consider now the manifold $W_{\cyl,+}:=(\RR_{\leq}\times M)\cup_M  W_+ $; thus we cut out $W_-$ and
we glue at its place $\RR_{\leq}\times M$.  We can also consider the class $[\chi_+ (D_{\cyl,+}) \chi_+]  \in 
K_n (\relD{W_{\cyl,+}}{W_+}^\Gamma/\relD{W_{\cyl,+}}{M}^\Gamma)$
and its lift to $K_n (D^* (W_+)^\Gamma/\relD{W_+}{M}^\Gamma)$ under the K-theory isomorphism
induced by the analogue  to \eqref{iso-part} but for $W_{\cyl,+}$. The two lifts can now be compared,
as they live in the K-theory of the same algebra, which is $D^*
(W_+)^\Gamma/\relD{W_+}{M}^\Gamma$.

\begin{lemma}\label{lem:reduction-partitioned}
In $K_n (D^* (W_+)^\Gamma/\relD{W_+}{M}^\Gamma)$ the following equality holds:
\begin{equation}\label{reduction-partitioned}
[\chi_+ D_W \chi_+]=
[\chi_+ (D_{\cyl,+}) \chi_+]
\end{equation}
\end{lemma}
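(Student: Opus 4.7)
The plan is to establish the operator-theoretic identity
\[
\chi_+\chi(D_W)\chi_+ \;\equiv\; \chi_+\chi(D_{\cyl,+})\chi_+ \pmod{\relD{W_+}{M}^\Gamma}
\]
in $D^*(W_+)^\Gamma/\relD{W_+}{M}^\Gamma$, from which the claimed equality of K-theory classes follows by applying the same involution-to-class procedure already used throughout the paper. Both compressions define elements of this quotient by exactly the argument of Lemma \ref{lem:involutions_in_quotient}, applied to the two decompositions $W=W_-\cup_M W_+$ and $W_{\cyl,+}=(\reals_{\leq 0}\times M)\cup_M W_+$; the hypothesis of uniformly positive scalar curvature and the product structure near $M$ guarantee invertibility and compatibility in both cases. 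The key geometric input is that $W_+$ embeds isometrically in both $W$ and $W_{\cyl,+}$, so $D_W$ and $D_{\cyl,+}$ act identically on spinors supported in the interior of $W_+$, being in both cases the same Dirac operator determined by $g|_{W_+}$.

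The proof proper closely follows the template of Lemma \ref{lem:subtract}. For $\varepsilon>0$ I would split $\hat\chi=\alpha_\varepsilon+\beta_\varepsilon$, with $\alpha_\varepsilon$ Schwartz and $\beta_\varepsilon$ a distribution supported in $(-\varepsilon,\varepsilon)$, giving
\[
\chi(D_W)-\chi(D_{\cyl,+}) = \int_{-\varepsilon}^{\varepsilon}\beta_\varepsilon(t)\bigl(e^{itD_W}-e^{itD_{\cyl,+}}\bigr)\,dt \;+\; \int_{\reals}\alpha_\varepsilon(t)\bigl(e^{itD_W}-e^{itD_{\cyl,+}}\bigr)\,dt,
\]
where both wave operators are viewed as acting on $L^2$-spinors via the canonical isometric embeddings of $L^2(S_{W_+})$ into $L^2(S_W)$ and $L^2(S_{W_{\cyl,+}})$ respectively. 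Unit propagation speed for the wave operator, together with the agreement of $D_W$ and $D_{\cyl,+}$ on $W_+$, implies that for any $\phi\in C_c(W_+)$ with $d(\supp\phi,M)>\varepsilon$ and every $|t|<\varepsilon$, the spinor $e^{itD_W}\phi$ remains supported in $W_+$ and coincides there with $e^{itD_{\cyl,+}}\phi$. Sandwiching by $\chi_+$ therefore annihilates the $\beta_\varepsilon$-summand on all such $\phi$, which is precisely the defining vanishing condition for membership in $\relD{W_+}{M}^\Gamma$.

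For the $\alpha_\varepsilon$-tail, the same propagation reasoning applied to $\phi$ with $d(\supp\phi,M)>R$ reduces the integrand to its restriction to $|t|\geq R$, and the rapid decay of $\alpha_\varepsilon$ then gives a norm estimate tending to $0$ as $R\to\infty$. Combining finite-propagation vanishing with norm decay of the tail places $\chi_+\bigl(\chi(D_W)-\chi(D_{\cyl,+})\bigr)\chi_+$ in the norm closure $\relD{W_+}{M}^\Gamma$, which is the desired congruence. The one point requiring care is the interplay between the discontinuous cut-off $\chi_+$ and genuine membership in $D^*$; this is handled by the same Kasparov-lemma/pseudolocality analysis already carried out in Lemmas \ref{lem:compact_supp} and \ref{lem:involutions_in_quotient}, and presents no genuine obstacle once one works throughout modulo the relative ideal.
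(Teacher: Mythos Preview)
Your proposal is correct and follows essentially the same approach as the paper: both arguments show that the operator difference $\chi_+\chi(D_W)\chi_+ - \chi_+\chi(D_{\cyl,+})\chi_+$ lies in the ideal $\relD{W_+}{M}^\Gamma$ by combining unit propagation of the wave operator (so that $e^{itD_W}$ and $e^{itD_{\cyl,+}}$ agree on sections supported far from $M$ for small $|t|$) with Fourier-side decay to control the large-$|t|$ contribution. The only organizational difference is that you decompose $\hat\chi=\alpha_\varepsilon+\beta_\varepsilon$ in the Fourier variable, following the template of Lemma~\ref{lem:subtract}, whereas the paper instead truncates the Fourier integral to $[-R,R]$ and then decomposes spatially via $\chi_{W_1\setminus U_R(M)}$ and $\chi_{U_R(M)}$; these are interchangeable packagings of the same propagation argument.
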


\noindent
Assuming the lemma we now conclude the proof of
 the partitioned  manifold theorem for $\rho$-classes.\\
Theorem \ref{theo:fundamental-cylinder}, the cylinder delocalized index theorem,
 states that for any  $n$-dimensional 
$\Gamma$-manifold without boundary, $n$ odd,  with isometric, free, cocompact action
and positive scalar curvature one has 
$$\delta_{{\rm MV}} \rho (D_{\RR\times M}) = \rho (D_M)\;\;\text{in}\;\;
K_{n+1} (D^* (M)^\Gamma) .$$
In particular,
$$\delta_{{\rm MV}} \rho_\Gamma (D_{\RR\times M}) = \rho_\Gamma (D_M)\;\;\text{in}\;\; K_{n+1} 
(D^*_\Gamma).$$
From this equation and the very definition 
of $\rho^{\rm pm}_\Gamma$ class , we obtain at once that 
$$\rho^{{\rm pm}}_\Gamma (D_{\RR\times M})=
\rho_\Gamma (D_M) \;\;\text{in}\;\;  K_{n+1}(D^*_\Gamma).$$
Thus it suffices to prove that
\begin{equation}\label{reduction-rho-pm}
 \rho^{{\rm pm}}_\Gamma (D_{W})=\rho^{{\rm pm}}_\Gamma (D_{\RR\times M})
 \end{equation}
In order to show this  equality  
 we observe, first of all, that it suffices to prove that
$$\rho^{{\rm pm}}_\Gamma (D_W)= \rho^{{\rm pm}}_\Gamma (D_{\cyl,+}) \quad\text{in}\quad
K_{n+1} (D^*_\Gamma)\,.$$
Indeed, if this equality holds we can further modify $W_{\cyl,+}$ by cutting out $M_+$
and gluing in at its place $\RR_{\geq} \times M$, obtaining from $W_{\cyl,+}$ the manifold
$W_{\cyl,\cyl}$, which is nothing but $\RR\times M$. By the same argument above we 
obtain the equality
$$\rho^{{\rm pm}}_\Gamma  (D_{\cyl,+})=  \rho^{{\rm pm}}_\Gamma  (D_{\cyl,\cyl})\equiv \rho^{{\rm pm}} (D_{\RR\times M})
\quad\text{in}\quad
K_{n+1} (D^*_\Gamma)\,.$$
This proves that $$\rho^{{\rm pm}}_\Gamma  (D_W)=\rho^{{\rm pm}}_\Gamma  (D_{\RR\times M})
\quad\text{in}\quad
K_{n+1} (D^*_\Gamma)$$
which is precisely \eqref{reduction-rho-pm}. In order to show that $\rho^{{\rm pm}}_\Gamma  (D_W)= 
\rho^{{\rm pm}}_\Gamma  (D_{\cyl,+})$ we use Lemma
\ref{lem:reduction-partitioned} and the following commutative
diagram. For the sake of brevity we set

\noindent
$
A:= \relD{W_{\cyl,+}}{W_+}^\Gamma / \relD{W_{\cyl,+}}{M}^\Gamma$,
$
B:=D^* (W_+)^\Gamma / \relD{W_{+}}{M}^\Gamma$,
$
C=\relD{W}{W_+}^\Gamma / \relD{W}{M}^\Gamma
$,

\noindent
$ A_\Gamma := \relD{ \RR_{\geq}\times
  E\Gamma}{\RR\times E\Gamma}^\Gamma /
  \relD{\RR\times E\Gamma}{\{0\}\times E\Gamma }^\Gamma,
$
$B_\Gamma:=D^* (\RR_{\geq}\times E\Gamma )^\Gamma /
\relD{\RR_{\geq}\times
  E\Gamma}{\{0\}\times E\Gamma}^\Gamma $,

\noindent
$ \mathfrak{A}_\Gamma:=
\relD{\RR\times E\Gamma}{\{0\}\times E\Gamma }^\Gamma $,
$ \mathfrak{B}_\Gamma:=
\relD{\RR_{\geq}\times E\Gamma}{\{0\}\times E\Gamma }^\Gamma $.

 \noindent
We notice furthermore that $n\equiv 1\,{\rm mod} \,2$.

\begin{equation*}
\begin{CD}
K_1 (A) @>>> K_1 (A_\Gamma) @>{\pa}>> K_0 ( \mathfrak{A}_\Gamma)@>{\iso}>> K_0 (D^*_\Gamma)\\
@AA{\iso}A @AA{\iso}A  @AAA @AA{=}A\\
K_1 (B) @>>> K_1 (B_\Gamma)@>{\pa}>> K_0 ( \mathfrak{B}_\Gamma)@>{\iso}>> K_0 (D^*_\Gamma)\\
@VV{\iso}V @VV{\iso}V @VVV @VV{=}V\\
K_1 (C) @>>> K_1 (A_\Gamma) @>{\pa}>> K_0 ( \mathfrak{A}_\Gamma)@>{\iso}>> K_0 (D^*_\Gamma)\\
\end{CD}
\end{equation*}

The class $\rho^{{\rm pm}}_\Gamma  (D_W)$ can be  obtained by mapping the class 
$ [\chi_+ D_W \chi_+]\in K_1 (C)$
all the way to $K_0 (D^*_\Gamma)$ via the homomorphisms
of the bottom  horizontal line. Here the naturality
of the boundary map in K-theory has been used. This same class can also be
computed, always applying commutativity and naturality, by lifting  
 $ [\chi_+ D_W \chi_+]\in K_1 (C)$ to $K_1 (B)$ and 
 then traveling on the central horizontal line.
The same argument applies to $\rho^{{\rm pm}}_\Gamma  (D_{\cyl,+})$, which is originally
defined by considering $[\chi_+ (D_{\cyl,+}) \chi_+]$ in $K_1 (A)$ and then traveling on the top horizontal line;
the resulting class in $K_0 (D^*_\Gamma)$
can also be obtained by lifting $[\chi_+ (D_{\cyl,+}) \chi_+]$ to $K_1 (B)$
and then traveling on the central horizontal line. Since, by the lemma, the two lifts of 
$ [\chi_+ D_W \chi_+]$ and $[\chi_+ (D_{\cyl,+}) \chi_+]$ are equal in $K_1 (B)$, we see that
$\rho^{{\rm pm}}_\Gamma  (D_W)= \rho^{{\rm pm}}_\Gamma  (D_{\cyl,+})$, as required.

Summarizing, assuming Lemma \ref{lem:reduction-partitioned} and 
the cylinder delocalized index theorem \ref{theo:fundamental-cylinder},
 we have proved that
$$\rho^{{\rm pm}}_\Gamma (D_{W})=\rho^{{\rm pm}}_\Gamma (D_{\RR\times M})\equiv 
\delta_{{\rm MV}} (\rho_\Gamma (D_{\RR\times M}))= \rho_{\Gamma} (D_M)$$
and this is precisely what we need to show in order to establish Theorem \ref{theo:part_mf_abst}

\medskip
\noindent
We shall now  prove  Lemma \ref{lem:reduction-partitioned}.\\
Consider more generally the following situation: we have two complete $\Gamma$-manifolds 
as above, $W$ and $Z$,
both endowed with metrics of positive scalar curvature and with partitions
$$W=W_1 \cup_{M} W_2\,,\quad Z=W_1 \cup_M Z_2\,.$$
In other words, the two partitions have one component equal, $W_1$, they (necessarily) 
involve the same hypersurface, $M$, but have the other component of the partition different. 
Choose a chopping function $\chi$ equal to $\pm 1$ on both the spectrum
of $D_W$ and $D_Z$. We want to show that 
\begin{equation}\label{equality-lemma}
\chi_{W_1} (\chi (D_W)) \chi_{W_1} = \chi_{W_1} (\chi (D_Z)) \chi_{W_1}\;\;\text{in}\;\;
D^* (W_1)^\Gamma / \relD{W_1}{M}^\Gamma \,,
\end{equation}
with $\chi_{W_1}$ denoting the characteristic function of $W_1$.
Consider the left hand side of the above equation, $\chi_{W_1} (\chi (D_W)) \chi_{W_1}$.
The Fourier transform of $\chi$ is a smooth rapidly decreasing function
away from $0$, see \cite[p.121]{roe-partitioning}.
Thus $\chi_{W_1} (\chi (D_W)) \chi_{W_1}$ is (up to multiplication with
$\sqrt{2\pi}$) approximated in norm for $R\in\reals$ large by
$$\chi_{W_1} \left(\int_{-R}^R \hat{\chi} (\xi ) e^{i\xi D_W} d\xi \right) \chi_{W_1}.$$
We rewrite this latter term as
\begin{equation}\label{useful-sum}
\begin{split}
\chi_{(W_1\setminus U_{R} (M))} \left(\int_{-R}^R \hat{\chi} (\xi ) e^{i\xi D_W} d\xi \right)\chi_{(W_1\setminus U_{R} (M_1))}  
+ \chi_{W_1} \left(\int_{-R}^R \hat{\chi} (\xi ) e^{i\xi D_W} d\xi \right) \chi_{U_{R} (M)}\\
+ \chi_{U_{R} (M)} \left(\int_{-R}^R \hat{\chi} (\xi ) e^{i\xi D_W} d\xi \right)\chi_{(W_1\setminus U_{R} (M))}
\end{split}
\end{equation}
Because of the unit propagation property, the first summand is unchanged if
we replace $D_W$ by $D_Z$, i.e.~is equal to
$$\chi_{(W_1\setminus U_{R} (M))} 
\left(\int_{-R}^R \hat{\chi} (\xi ) e^{i\xi D_Z} d\xi
\right)\chi_{(W_1\setminus U_{R} (M))} \;.$$
We then rewrite \eqref{useful-sum} as
\begin{equation*}
\begin{split}
\chi_{W_1} \left(\int_{-R}^R \hat{\chi} (\xi ) e^{i\xi D_Z} d\xi \right)\chi_{W_1}  
+ \chi_{W_1} \left(\int_{-R}^R \hat{\chi} (\xi ) e^{i\xi D_W} d\xi \right) \chi_{U_{R} (M)}\\
+ \chi_{U_{R} (M)} \left(\int_{-R}^R \hat{\chi} (\xi ) e^{i\xi D_W} d\xi \right)\chi_{(W_1\setminus U_{R} (M))} 
-\chi_{W_1} \left(\int_{-R}^R \hat{\chi} (\xi ) e^{i\xi D_Z} d\xi \right) \chi_{U_{R} (M)} \\
- \chi_{U_{R} (M)} \left(\int_{-R}^R \hat{\chi} (\xi ) e^{i\xi D_Z} d\xi
  \right)\chi_{(W_1\setminus U_{R} (M))} \;.
\end{split}
\end{equation*}
The first summand in this sum approximates $\chi_{W_1} (\chi (D_Z))\chi_{W_1} $; moreover,
by unit propagation the remaining four summands are elements in the
ideal $\relD{W_1}{M}^\Gamma $.
Therefore the difference $$\chi_{W_1} (\chi (D_W))\chi_{W_1}-
\chi_{W_1} (\chi (D_Z))\chi_{W_1} $$ is approximated by a sequence of elements in the ideal
$\relD{W_1}{M}^\Gamma$; since this ideal is closed we have proved that 
 $\chi_{W_1} (\chi (D_W))\chi_{W_1}-
\chi_{W_1} (\chi (D_Z))\chi_{W_1} \in \relD{W_1}{M}^\Gamma$, i.e.
that  $$\chi_{W_1} (\chi (D_W))\chi_{W_1}=
\chi_{W_1} (\chi (D_Z))\chi_{W_1} \;{\rm  mod}\; \relD{W_1}{M}^\Gamma.$$ The
lemma is proved.

\smallskip
\noindent
The proof of the partitioned manifold theorem for $\rho$-classes,
Theorem \ref{theo:part_mf_abst}, is now complete.

\begin{remark}
  In \cite{Siegel} and \cite{Schick-Zadeh} the classical partitioned manifold
  index theorem is extended to a multi-partitioned situation, i.e.~to a
  manifold partitioned by $k$ suitably transversal hypersurfaces. It would be
  interesting to generalize also our $\rho$-index theorem to the
  multi-partitioned situation. This does not seem straightforward if one only
  assumes a product structure near the (codimension $k$) intersection of the
  $k$ hypersurfaces. 
\end{remark}

\subsection{Proof of  Theorem \ref{theo:fundamental-cylinder}} \label{sec:proof_cylindercase}

In this subsection, we finally prove
Theorem \ref{theo:fundamental-cylinder}, the cylinder delocalized index
theorem.\\ 
We will only treat the case in which $n+1$, the dimension of
the cylinder $\RR\times M$, is even.
We want to show  the equality 
$$\delta_{{\rm MV}} (\rho(D_{\RR\times M}))= \rho (D_M) \quad\text{in}\quad K_0 (D^* (M)^\Gamma).$$
Equivalently, 
see Remark \ref{remark:explicit_mv}, we want to show that 
$$\pa [ \psi_+ \chi(D_{\cyl})_+
\psi_+ ]=j_M\, \rho(D_M) \quad\text{in}\quad K_{0} ( \relD{[0,\infty)\times 
   M}{M}^\Gamma)$$
 where 
 $j_M  \colon K_*  (D^*(M)^\Gamma )$
$\to$ $K_{*} ( \relD{[0,\infty)\times M}{M}^\Gamma)$
is the map induced by the inclusion $M\hookrightarrow
[0,\infty)\times M$ and $\pa$ is the boundary map 
 $  K_{1} (D^* ( [0,\infty)\times M)^\Gamma / \relD{[0,\infty)\times 
 M}{M}^\Gamma) 
\rightarrow K_{0} ( \relD{[0,\infty)\times 
   M}{M}^\Gamma)$.

 
 \begin{notation}\label{not:proof-cyl}
 In order to lighten the notation we shall always write  $\mathcal{L}^2 (M)$
 for the covariant $M$-module given by the $L^2$-section of the spinor bundle
 of $M$; the latter is denoted
 $S_M$. We shall consider  $\RR\times M$ and write $\mathcal{L}^2_{\oplus}
 (\RR\times M)$
 for the $L^2$-sections of the bundle obtained by pulling back $S_M\oplus S_M$ from $M$
 to $\RR\times M$. We keep the notation $\mathcal{L}^2 (\RR\times M)$  for the $L^2$-sections of  $S_M$.
 Similar notations are adopted for $[0,\infty)\times M\equiv \RR_{\geq}\times M$, the half cylinder.
 Departing from the notation adopted so far, and only for this subsection, we denote by  $D$
  the ($\Gamma$-equivariant)  Dirac operator on $M$ and by $D_{\cyl}$
 the ($\Gamma$-equivariant) operator on $\RR\times M$ (these being the only Dirac operators we will be concerned with).
 \end{notation}
 
 We thus tackle the proof of the
  identity
\begin{equation}\label{iota-pa-2bis}
\pa [\psi_+ \chi(D_{\cyl})_+ \psi_+]=j_M\, \rho(D)  \end{equation}
 in $ K_{n+1} ( \relD{[0,\infty\times M)}{M}^\Gamma)$,
with $\psi_+$ the characteristic function of $[0,\infty)\times M$ in
$\RR\times M$. 

In order to establish \eqref{iota-pa-2bis} we need 
an explicit representative for the right hand side. To define $j_{M}$
 we must  find  an isometry  $V\colon \mathcal{L}^2 (M)\to  \mathcal{L}^2
 (\RR_{\geq}\times M)$
 covering in the $D^*$-sense the inclusion $M\hookrightarrow
\RR_{\geq}\times M$, $m\mapsto (0,m)$; see Section \ref{sec:intro}.
Of course we know that one can always find covariant modules $H_1$ for $M$,
$H_2$ for
$\RR_{\geq}\times M$ and an isometry $V\colon H_1\to H_2$ covering the inclusion
in the $D^*$-sense; 
here we want to show that we can choose $H_1= \mathcal{L}^2 (M)$,
$H_2= \mathcal{L}^2 (\RR_{\geq}\times M)$ and then describe
explicitly the isometry $V$.

Consider $ \mathcal{L}^2 (\RR_{\geq}\times M)$; this can be identified with $L^2 ([0,\infty), 
\mathcal{L}^2 (M))$.
Define $V$
as follows: 
\begin{equation}\label{V}
\mathcal{L}^2 (M)\ni s\mapsto Vs\in L^2 ([0,\infty), 
\mathcal{L}^2 (M))\,,\;\;(Vs)(t):= \sqrt{2|D|}e^{-t|D|}(s)\,.\end{equation}

 \begin{proposition}\label{prop:v-covers}
 The bounded  linear operator $V\colon \mathcal{L}^2 (M)\to  \mathcal{L}^2
 (\RR_{\geq}\times M)$
 of \eqref{V}
 covers in the $D^*$-sense the inclusion $i\colon M\hookrightarrow
 \RR_{\geq}\times M$, $m\to (0,m)$. 
 \end{proposition}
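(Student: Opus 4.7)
The plan is to verify the two conditions of Definition \ref{def:cover-D*-sense} for $V$.

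I would first check that $V$ is an isometry. Since $M$ carries positive scalar curvature, the Lichnerowicz formula implies $\sigma(|D|)\subset[\delta,\infty)$ for some $\delta>0$, so $0$ is not in the spectrum. By the spectral theorem applied to $|D|$ and Fubini,
\[
\|Vs\|^2=\int_0^\infty\langle 2|D|\,e^{-2t|D|}s,s\rangle\,dt=\langle s,s\rangle,
\]
where the last step uses $\int_0^\infty 2\lambda\, e^{-2t\lambda}\,dt=1$ for every $\lambda\ge\delta$.

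Next I would construct a norm-approximating sequence $V^{(n)}$ of $V$ such that each $V^{(n)}$ satisfies both conditions of Definition \ref{def:cover-D*-sense}. Write $V_t:=\sqrt{2|D|}\,e^{-t|D|}=f_t(D)$ with $f_t(\lambda):=\sqrt{2|\lambda|}\,e^{-t|\lambda|}$. Because $\sigma(D)$ avoids $(-\delta,\delta)$, $f_t$ may be replaced by a Schwartz extension $\tilde f_t\in\mathcal{S}(\RR)$ with $\tilde f_t(D)=f_t(D)$, depending smoothly on $t>0$. By finite propagation speed of the wave operator,
\[
\tilde f_t(D)=\frac{1}{\sqrt{2\pi}}\int_{\RR}\widehat{\tilde f}_t(\xi)\,e^{i\xi D}\,d\xi,
\]
and truncating this integral to $|\xi|\le n$ produces an operator $f_t^{(n)}(D)$ of propagation $\le n$ on $M$. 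Coupling with a temporal cut-off $\chi_n\in C_c((0,\infty))$ supported in $[1/n,n]$ (to avoid the unboundedness of $V_t$ at $t=0$), set
\[
V^{(n)}s(t):=\chi_n(t)\,f_t^{(n)}(D)s.
\]
Norm convergence $V^{(n)}\to V$ follows by combining $\int_0^{1/n}\|V_ts\|^2\,dt+\int_n^\infty\|V_ts\|^2\,dt\to 0$ (a consequence of the spectral identity $\|Vs\|^2=\|s\|^2$) with uniform norm control of $\tilde f_t(D)-f_t^{(n)}(D)$ for $t\in[1/n,n]$.

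Condition (1) of Definition \ref{def:cover-D*-sense} then holds by construction: $\phi V^{(n)}\psi=0$ whenever $\phi\in C_c(\RR_{\ge}\times M)$ and $\psi\in C_c(M)$ satisfy $d(\mathrm{Supp}(\phi),i(\mathrm{Supp}(\psi)))>\sqrt 2\,n$, because the Schwartz kernel of $V^{(n)}$ is supported in $\{(t,m,m'):t\in[1/n,n],\ d_M(m,m')\le n\}$. For condition (2), given $\phi\in C_0(\RR_{\ge}\times M)$ and reducing by density to $\phi\in C_c$, a direct computation gives
\[
\bigl((\phi V^{(n)}-V^{(n)}(\phi\circ i))s\bigr)(t,m)=\chi_n(t)\bigl\{(\phi(t,m)-\phi(0,m))(f_t^{(n)}(D)s)(m)+([\phi(0,\cdot),f_t^{(n)}(D)]s)(m)\bigr\}.
\]
For each $t\in[1/n,n]$, $f_t^{(n)}(D)$ is pseudolocal with finite propagation, so the commutator is compact on $\mathcal{L}^2(M)$ by Kasparov's Lemma \ref{lem:kasparov}, and the first term is compact because it factors as the compactly supported multiplication by $\phi(t,\cdot)-\phi(0,\cdot)$ composed with the smoothing operator $f_t^{(n)}(D)$. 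Norm-continuity in $t$ of the compact-operator-valued integrand, together with its support in $[1/n,n]$, yields a compact operator $\mathcal{L}^2(M)\to\mathcal{L}^2(\RR_{\ge}\times M)$.

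The main obstacle will be the uniform control needed for the double approximation: the Fourier-side truncation $\tilde f_t\rightsquigarrow f_t^{(n)}$ and the temporal cut-off must be coordinated so that $V^{(n)}\to V$ in operator norm, which rests on uniform Schwartz-seminorm bounds for $\tilde f_t$ on bounded $t$-intervals and careful bookkeeping of the singularity of $\|V_t\|$ as $t\downarrow 0$ against the integrability of $\|V_ts\|^2$ provided by the isometry identity.
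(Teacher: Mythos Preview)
Your argument has a genuine gap in the norm convergence $V^{(n)}\to V$, precisely at the small-$t$ cutoff. You claim that $\int_0^{1/n}\|V_ts\|^2\,dt\to 0$ as a consequence of the isometry identity, but that identity only gives pointwise convergence in $s$, not uniform convergence over $\|s\|=1$. Indeed, by the spectral calculus
\[
\int_0^{1/n}\|V_ts\|^2\,dt=\bigl\langle\bigl(I-e^{-2|D|/n}\bigr)s,s\bigr\rangle,
\]
so the supremum over unit vectors equals $\|I-e^{-2|D|/n}\|=\sup_{\lambda\in\sigma(|D|)}\bigl(1-e^{-2\lambda/n}\bigr)=1$, since $\sigma(|D|)$ is unbounded. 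Thus cutting away a neighbourhood of $t=0$ destroys norm convergence. The Schwartz-extension route cannot repair this: any smooth $\tilde f_t$ agreeing with $\sqrt{2|\lambda|}\,e^{-t|\lambda|}$ on $\sigma(D)$ has Schwartz seminorms (and hence Fourier-tail mass) blowing up as $t\downarrow 0$, so you cannot get a Fourier-truncation error uniform on $[0,n]$ either.

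The paper's proof avoids this by \emph{not} cutting off near $t=0$ and by computing $\widehat{h_t}$ explicitly: writing $h_t(\lambda)=\sqrt{|\lambda|}\,e^{-t|\lambda|}$ one finds $\widehat{h_t}(\xi)=C\bigl[(t+i\xi)^{-3/2}+(t-i\xi)^{-3/2}\bigr]$, whence $|\widehat{h_t}(\xi)|\le 2C|\xi|^{-3/2}$ \emph{uniformly in $t>0$}. Truncating the Fourier integral to $|\xi|\le R^4$ while keeping all $t\in[0,R]$ then produces approximants $U_R$ of propagation $\lesssim R^4$ with $\|V-U_R\|\to 0$. The uniform $|\xi|^{-3/2}$ decay is the key analytic input that your Schwartz-extension argument loses. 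Your treatment of condition~(2) via compactness of each approximant is reasonable in spirit (and arguably tidier than the paper's direct Kasparov-lemma argument for $V$ itself), but it is moot without norm convergence of the approximants.
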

 
 \begin{proof}
 We prove this in Subsection \ref{sec:proof_of_important_props}.
 \end{proof}
 
 We now observe that by its very definition
  the $\rho$-class  of the operator on $M$,
$\rho (D)=[ \chi_{[0,\infty)} (D)]\;\;\text{in}\;\;  K_0 ( D^*(M)^\Gamma ) $.

    We have, by definition of $j_{M }$,
 $$j_{M} [ \chi_{[0,\infty)} (D)]= [V \chi_{[0,\infty)} (D) V^*]\in K_0 (
 \relD{\RR_{\geq}\times M}{M}^\Gamma)\,.$$
The operator  $P:=V \chi_{[0,\infty)} (D) V^*$, which is a projector,  acts as
follows 
on $L^2 ([0,\infty),\mathcal{L}^2 (M))$:
\begin{equation}\label{conjugated-rho}
(V\chi_{[0,\infty)} (D) V^*g)(t)=  \int_0^\infty \sqrt{2|D|}e^{-t|D|}  \chi_{[0,\infty)} (D)
\sqrt{2|D|}e^{-
\tau |D|}g d\tau  .
\end{equation}
Thus, we need to show that the K-theory class of the projector $P$ given by \eqref{conjugated-rho}
coincide with the index class  $\pa [ \psi_+ \chi(D_{\cyl})_+  \psi_+]$. In order to achieve this
it suffices to show that there exists an $L\in D^*(\RR_{\geq}\times M)$
such that
\begin{equation}\label{adapted-parametrix}
(\psi_+ \chi(D_{\cyl})_+ \psi_+) \circ L= \Id\,; \quad L\circ (\psi_+ \chi(D_{\cyl})_+ \psi_+) = \Id - P
\end{equation}
Indeed, from the very definition of the boundary map, see \cite{blackadar}, we would then have
that $\pa [ \psi_+ \chi(D_{\cyl})_+ \psi_+]=[P]$ which is what we wish to prove.

In order to find such an $L$ we first perform a deformation of the
representatives of the class 
$[ \psi_+ \chi(D_{\cyl})_+ \psi_+]$ in $K_1 (D^*
  ( \RR_{\geq}\times M)^\Gamma/ \relD{\RR_{\geq}\times M}{M}^\Gamma )$. Let us denote by
  $t$ the variable on the line $\RR$ appearing in $\RR\times M$; let us denote
  by $\pa_t$ the derivative with respect to $t$. We first concentrate our
  analysis on $\RR\times M$. 
  The operator $D_{\cyl}$ is given as
  \begin{equation}\label{d-cyl}
 \left( \begin{matrix}
      0 & D -\pa_t \\ D + \pa_t & 0
    \end{matrix}\right)\,.
  \end{equation}
  where we recall that $D$ denotes the $\Gamma$-equivariant Dirac operator on $M$.
  We have already observed that since
   $D$ is assumed to be $L^2$-invertible (the scalar curvature on $M$ is positive), also $D_{\cyl}$ is $L^2$-invertible.
 We choose as a chopping function $\chi$ the one given by 
 $\chi (t)= 1$ for $t\geq 0$, $\chi (t)=-1$ for $t<0$, which is continuous on
 the  spectrum of $D_{cyl}$.  Thus $\chi (D_{\cyl})_+$ is the bounded operator
 on $\mathcal{L}^2 (\RR\times M)$ given by the left
 bottom corner of 
  \begin{equation*}
 \left( \begin{matrix}
      0 & \frac{D -\pa_t}{\sqrt{D^2-\pa_t^2}} \\ \frac{D + \pa_t}{\sqrt{D^2-\pa_t^2}} & 0
    \end{matrix}\right)\,.
  \end{equation*}
  The operator $\chi(D_{\cyl})_+$ will be written as 
  $\frac{D + \pa_t}{\sqrt{D^2-\pa_t^2}} $. We shall connect it to 
  \begin{equation*}
  \frac{|D| + \pa_t}{D-\pa_t}
  \end{equation*}
  which is also an invertible operator on $\mathcal{L}^2 (\RR\times M)$.
  We observe here a few useful identities: $[D,\pa_t]=0=[|D|,\pa_t]$;
  $(|D|-\pa_t)(|D|+\pa_t)=D^2-\pa^2_t=(D-\pa_t)(D+\pa_t)$. The latter equality
  gives 
  $$ \frac{|D|+\pa_t}{D-\pa_t}=\frac{D+\pa_t}{|D|-\pa_t}\,.$$ Notice, in
  particular, that 
  \begin{equation}\label{inverse}
  \left( \frac{|D| + \pa_t}{D-\pa_t}\right)^{-1}=\frac{|D|-\pa_t}{D+\pa_t}\,.
  \end{equation}
  We claim that the line segment joining the two operators, 
  $$ s\mapsto s \frac{D + \pa_t}{\sqrt{D^2-\pa_t^2}} + (1-s)  \frac{|D| + \pa_t}{D-\pa_t}$$
  is through $L^2$-invertible operators in $D^* (\RR\times M)^\Gamma $.\\The fact that for each $s\in [0,1]$ the
  above operator is invertible can be seen by rewriting it as $A_s/((D-\pa_t)\sqrt{D^2-\pa_t^2})$ with 
  $A_s=s(D^2-\pa_t^2)+(1-s) (|D|+\pa_t)\sqrt{D^2-\pa_t}$; it suffices to show that $A_s$
  is invertible for each $s\in [0,1]$, which in turn is proved with an
  elementary computation 
  by showing  that $A_s^* A_s >0$ (here the $L^2$-invertibility of $D$ and
  $D_{\cyl}$ is used).

  Next we address the fact that $ s \frac{D + \pa_t}{\sqrt{D^2-\pa_t^2}} + (1-s)  \frac{|D| + \pa_t}{D-\pa_t}
  \in D^* (\RR\times M)^\Gamma $ for each $s$. Since $D^*$ is a $C^*$-subalgebra of the bounded of operators
  of $\mathcal{L}^2 (\RR\times M)$, it suffices to show that the end points of the convex combination 
  are in $D^* (\RR\times M)^\Gamma$. We already know that $\frac{D + \pa_t}{\sqrt{D^2-\pa_t^2}}$
  is in $D^* (\RR\times M)^\Gamma $ (given that is is the left bottom corner of $D_{\cyl}/|D_{\cyl}|$). Thus we only need to establish the following 
  
 \begin{proposition}\label{prop:deformed-in-d}
 The operator 
  $\frac{|D| + \pa_t}{D-\pa_t}$ belongs to 
 $ D^* (\RR\times M)^\Gamma$.
 \end{proposition}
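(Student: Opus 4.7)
The plan is to reduce the statement to a cleaner auxiliary operator by an algebraic factorization, and then to realize that operator as a corner of the chopping of a suitable self-adjoint operator whose $D^*$-membership can be attacked with a wave-operator argument.

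\textbf{Step 1 (factorization).} Multiplying numerator and denominator by $D+\partial_t$ and using $(D-\partial_t)(D+\partial_t)=D^2-\partial_t^2$, I rewrite
\[
\frac{|D|+\partial_t}{D-\partial_t}
\;=\;\frac{|D|+\partial_t}{\sqrt{D^2-\partial_t^2}}\cdot\frac{D+\partial_t}{\sqrt{D^2-\partial_t^2}}
\;=\;U\cdot \chi(D_{\cyl})_+,
\]
with $U:=(|D|+\partial_t)/\sqrt{D^2-\partial_t^2}$. Since the factor $\chi(D_{\cyl})_+$ is already known to lie in $D^*(\RR\times M)^\Gamma$ by the paragraph preceding the Proposition, and $D^*(\RR\times M)^\Gamma$ is closed under composition, it suffices to prove $U\in D^*(\RR\times M)^\Gamma$.

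\textbf{Step 2 (corner realization).} I would introduce the self-adjoint operator on the doubled adequate module $\mathcal{L}^2_\oplus(\RR\times M)$
\[
\widetilde D \;:=\; \begin{pmatrix} 0 & |D|-\partial_t\\[2pt] |D|+\partial_t & 0 \end{pmatrix}.
\]
The identities $(|D|)^*=|D|$ and $(\partial_t)^*=-\partial_t$ give $\widetilde D^*=\widetilde D$, and a direct block computation yields $\widetilde D^{\,2}=(D^2-\partial_t^2)\,\Id$. Hence $|\widetilde D|=\sqrt{D^2-\partial_t^2}$ is invertible by the spectral gap of $D$ coming from the PSC on $M$, and the bottom-left entry of $\chi(\widetilde D)=\widetilde D\,|\widetilde D|^{-1}$ is precisely the operator $U$. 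The problem thereby reduces to showing $\chi(\widetilde D)\in D^*(\RR\times M)^\Gamma$ on the doubled module and then extracting the bottom-left corner.

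\textbf{Step 3 (the main obstacle).} The difficulty, and the step I expect to be the most delicate, is that $\widetilde D$ is \emph{not} a differential operator: the term $|D|$ is only pseudo-differential, so the standard Roe argument producing $\chi$ of a Dirac-type operator as a norm limit of finite-propagation operators does not apply verbatim. The strategy is to exploit that $\widetilde D^{\,2}=D^2-\partial_t^2$ \emph{is} a genuine differential Laplacian on the cylinder, whose wave equation propagates at unit speed. Writing
\[
e^{is\widetilde D}\;=\;\cos\!\bigl(s\sqrt{\widetilde D^{\,2}}\bigr)\;+\;i\widetilde D\cdot\frac{\sin\!\bigl(s\sqrt{\widetilde D^{\,2}}\bigr)}{\sqrt{\widetilde D^{\,2}}},
\]
the first summand has unit propagation by classical hyperbolic theory; for the second summand, the exponential decay of the spectral projections of $|D|$ controlled by the PSC gap is used to absorb the non-locality of $\widetilde D$ into norm-convergent approximations by finite-propagation operators. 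Approximating an arbitrary chopping function $\chi$ uniformly on $\mathrm{spec}(\widetilde D)$ by functions with compactly supported Fourier transform then produces $\chi(\widetilde D)$ as a norm limit of such operators, and pseudo-locality is secured via the Kasparov Lemma together with elliptic regularity of $\widetilde D^{\,2}$. Taking the bottom-left block yields $U\in D^*(\RR\times M)^\Gamma$, and combined with Step 1 this completes the proof.
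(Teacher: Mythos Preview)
Your Steps 1 and 2 are correct and elegant: the factorization $\frac{|D|+\partial_t}{D-\partial_t}=U\cdot\chi(D_{\cyl})_+$ with $U=(|D|+\partial_t)/\sqrt{D^2-\partial_t^2}$ is valid, and $U$ is indeed the lower-left corner of $\chi(\widetilde D)$ for $\widetilde D=\begin{pmatrix}0&|D|-\partial_t\\ |D|+\partial_t&0\end{pmatrix}$ with $\widetilde D^{\,2}=(D^2-\partial_t^2)\Id$.

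The genuine gap is in Step 3. In your splitting
\[
e^{is\widetilde D}=\cos\bigl(s\sqrt{\widetilde D^{\,2}}\bigr)+i\,\widetilde D\cdot\frac{\sin\bigl(s\sqrt{\widetilde D^{\,2}}\bigr)}{\sqrt{\widetilde D^{\,2}}},
\]
the cosine term does have propagation $\leq|s|$, but the second summand is the product of a finite-propagation operator with $\widetilde D$, whose off-diagonal entries contain the non-local, \emph{unbounded} operator $|D|$. Your appeal to ``exponential decay of the spectral projections of $|D|$ controlled by the PSC gap'' does not give what you need: the gap only lets you write $|D|=f(D)$ for a smooth $f$, but $f$ has linear growth, so $\hat f$ is not integrable and you cannot approximate $f(D)$ in the $L^2\!\to\!L^2$ operator norm by finite-propagation operators. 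Put differently, after integrating against $\hat\chi$ you must show that $|D|\cdot\frac{\sin(s\sqrt{D^2-\partial_t^2})}{\sqrt{D^2-\partial_t^2}}$ is approximately finite-propagation; rewriting this as $\frac{|D|}{\sqrt{D^2-\partial_t^2}}\cdot\sin\bigl(s\sqrt{D^2-\partial_t^2}\bigr)$ just reproduces the operator $U$ (up to a bounded factor), so the argument is circular. The pseudo-locality claim via ``elliptic regularity of $\widetilde D^{\,2}$'' has the same defect: ellipticity of the square says nothing about commutators of $\chi(\widetilde D)$ once $\widetilde D$ itself is non-local.

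For comparison, the paper avoids this trap by a completely different decomposition: it writes $\frac{|D|+\partial_t}{D-\partial_t}=\frac{|D|}{D-\partial_t}+\frac{\partial_t}{D-\partial_t}$ and treats each summand as a composition $L^2\xrightarrow{(D-\partial_t)^{-1}}H^1\xrightarrow{|D|\text{ or }\partial_t}L^2$. The unboundedness of $|D|$ is absorbed into the Sobolev bookkeeping: one approximates $|D|\colon H^1\to L^2$ (not $L^2\to L^2$) by finite-propagation operators $k_\epsilon(D)$, and pseudo-locality is obtained by transplanting to a compact manifold and invoking the pseudodifferential calculus there. To salvage your route you would need an analogous mechanism---some norm in which $|D|$ can be approximated by finite-propagation operators---and you have not supplied one.
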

 \begin{proof}
We prove this in Subsection \ref{sec:proof_of_important_props}.
 \end{proof}
  
Using the  above convex combination we see easily  that 
\begin{equation*}
[\psi_+  \frac{D + \pa_t}{\sqrt{D^2-\pa_t^2}} \psi_+] =[\psi_+  \frac{|D| + \pa_t}{D-\pa_t} \psi_+]\;\;\text{in}\;\;
K_1 (D^* (\RR_{\geq}\times M)^\Gamma /\relD{\RR_{\geq}\times M}{M}^\Gamma )
\end{equation*}
In particular $\pa [\psi_+  \frac{D+ \pa_t}{\sqrt{D^2-\pa_t^2}} \psi_+] =\pa [\psi_+  \frac{|D| + \pa_t}{D-\pa_t} \psi_+]$
in $K_0 (\relD{\RR_{\geq}\times M}{M}^\Gamma )$ 
and so we are reduced to the problem of finding
$Q\in D^*(\reals_{\ge}\times M)^\Gamma$ such that 
\begin{equation}\label{adapted-parametrix-right}
(\psi_+   \frac{|D| + \pa_t}{D-\pa_t} \psi_+) \circ Q= \Id_{\mathcal{L}^2
  (\RR_{\geq}\times M)},
\end{equation}
\begin{equation}\label{adapted-parametrix-left}
 Q\circ (\psi_+ \frac{|D| + \pa_t}{D-\pa_t} \psi_+) = \Id_{\mathcal{L}^2
   (\RR_{\geq}\times M)} - P \;.
 \end{equation}
We claim that we can take $Q:= \psi_+ (\frac{|D|-\pa_t}{D+\pa_t}) \psi_+$. First, by
Proposition \ref{prop:deformed-in-d}, $Q\in D^*(\reals_{\ge}\times M)^\Gamma $.
Next we need to show that with this choice
\eqref{adapted-parametrix-right} holds.
Using that $(\psi_+)^2=\psi_+$, together with  \eqref{inverse} we see that it suffices to show that
$\psi_+(\frac{|D|-\pa_t}{D+\pa_t}) \psi_+ = (\frac{|D|-\pa_t}{D+\pa_t})
\psi_+$ on $\mathcal{L}^2 (\RR\times M)$.
We decompose $\mathcal{L}^2 (\RR\times M)$ using the spectral transform
induced by  the Browder-Garding decomposition associated to the self-adjoint
operator $D$, see \cite{ramachandran}. Thus there is an isometry 
\begin{equation}\label{spectral_transform}
T\colon \mathcal{L}^2 (\RR\times M)\to L^2 (\RR,\oplus_{j} L^2 (\RR,d\mu_j) )
\end{equation}
 such that 
$\psi_+(\frac{|D|-\pa_t}{D+\pa_t}) \psi_+ = (\frac{|D|-\pa_t}{D+\pa_t}) \psi_+$ if and only if
$$\chi_{[0,\infty)} \frac{|\lambda |-\pa_t}{\lambda+\pa_t}\chi_{[0,\infty)}= \frac{|\lambda |-\pa_t}{\lambda+\pa_t}\chi_{[0,\infty)}$$ on each $L^2 (\RR,L^2 (\RR,d\mu_j))$. Conjugate both sides of the last equation by 
Fourier transform for the $t$-variable $\mathcal{F}$, then write
$\mathcal{F}^{-1} \frac{|\lambda |-\pa_t}{\lambda+\pa_t}\chi_{[0,\infty)} 
\mathcal{F}$ as 
($\mathcal{F}^{-1} \frac{|\lambda |-\pa_t}{\lambda+\pa_t}\mathcal{F})( \mathcal{F}^{-1}\chi_{[0,\infty)}
\mathcal{F}$). Then the right hand side
is equal to the projection onto the Hardy space, see \cite{stein-shakarchi},
followed by the multiplication operator
by $\frac{|\lambda
  |+i\tau}{\lambda-i\tau}=\frac{\lambda+i\tau}{\abs{\lambda}-i\tau}$. The
latter function is holomorphic on the upper 
half plane, so multiplication by it does preserve the Hardy space. It follows
that projecting once again  onto
the Hardy space leaves  it unchanged. This proves that
$\chi_{[0,\infty)} \frac{|\lambda |-\pa_t}{\lambda+\pa_t}\chi_{[0,\infty)}= \frac{|\lambda |-\pa_t}{\lambda+\pa_t}\chi_{[0,\infty)}$ and thus that  \eqref{adapted-parametrix-right}  holds.
Next we tackle \eqref{adapted-parametrix-left}, i.e. the equation 
\begin{equation}\label{adapted-parametrix-left-bis}
 \psi_+ (\frac{|D|-\pa_t}{D+\pa_t}) \psi_+\circ (\psi_+ \frac{|D| +
   \pa_t}{D-\pa_t} \psi_+) = \Id_{\mathcal{L}^2 (\RR_{\geq}\times M)} - P
 \end{equation}
with    $P\colon L^2 ([0,\infty), 
\mathcal{L}^2 (M))\to L^2 ([0,\infty), 
\mathcal{L}^2 (M))$ 
defined by 
\begin{equation}\label{eq:def_of_P}
  (Pg)(t)=  \int_0^\infty \sqrt{2|D|}e^{-t|D|}  \chi_{[0,\infty)} (D_{\pa})
\sqrt{2|D|}e^{-u |D|}g du \;.
\end{equation}

\begin{lemma}\label{lem:proj-lambda}
Under the spectral transform 
\begin{equation}\label{spectral-tr}
T\colon \mathcal{L}^2 ([0,\infty)\times M)\to L^2 ([0,\infty),\oplus_{j} L^2 (\RR,d\mu_j) )
\end{equation}
the projector $P$ diagonalizes. Let $P_j$ be the restriction of $P$ to
$L^2 ([0,\infty), L^2 (\RR_{\lambda},d\mu_j) )$ and consider the decomposition 
$$L^2 ([0,\infty), L^2 (\RR,d\mu) )=L^2 ([0,\infty), L^2 ([0,\infty)_\lambda,d\mu))\oplus 
L^2 ([0,\infty), L^2 ((-\infty),0)_\lambda,d\mu))\,.$$
Then $P_j$ is diagonal and equal to 
$$\left( \begin{matrix}
      P^+_j & 0 \\ 0 & 0 
    \end{matrix}\right)$$
    with $P^+_j$  described in the following way:\\
    view $L^2 ([0,\infty), L^2([0,\infty)_\lambda),d\mu_j)$ as $L^2_{d\mu_j} ([0,\infty)_\lambda), L^2 [0,\infty)_t)$. With respect to this decomposition, $P^+_j$ is the direct integral $\int_{[0,\infty)} P_\lambda\, d\mu_j(\lambda)$ where $P_\lambda\colon L^2([0,\infty))\to L^2([0,\infty))$ is the projector onto the subspace spanned by $f_\lambda(t)=\sqrt{2\lambda}e^{-\lambda t}$.
   
    Observe that, because $0$ is not in the spectrum of the operator $D$, $0$
    is not in the support of any of the measures $\mu_j$ so that $f_\lambda$
    indeed is in $L^2([0,\infty))$ and depends continuously on $\lambda$
    for all $\lambda$ relevant to us.
\end{lemma}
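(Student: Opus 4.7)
The lemma is essentially a computation; the key conceptual point is that every operator entering the definition \eqref{eq:def_of_P} of $P$ is built by Borel functional calculus of $D$ alone, acting purely in the $M$-direction, and the spectral transform $T$ simultaneously diagonalizes all such operators. Concretely, with $V$ as in \eqref{V} we have $P = V\,\chi_{[0,\infty)}(D)\,V^*$, and $V$ itself is built from $|D|$ and $e^{-t|D|}$ (multiplied by scalars and an integration in $t$).

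First I would identify $\mathcal{L}^2(\RR_{\geq}\times M)$ with $L^2([0,\infty)_t, \mathcal{L}^2(M))$ and extend $T$ from \eqref{spectral_transform} to a unitary $\tilde T := \mathrm{id}_t \otimes T$ onto $L^2([0,\infty)_t, \bigoplus_j L^2(\RR, d\mu_j))$. Under $T$, for every bounded Borel $\phi$ the operator $\phi(D)$ becomes multiplication by $\phi(\lambda)$ on each sheet $L^2(\RR, d\mu_j)$; hence under $\tilde T$ the operators $\sqrt{2|D|}\,e^{-t|D|}$ and $\chi_{[0,\infty)}(D)$ correspond to multiplication by $\sqrt{2|\lambda|}\,e^{-t|\lambda|}$ and $\chi_{[0,\infty)}(\lambda)$, respectively. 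Conjugating the three factors of $P$ by $\tilde T$ and exchanging $\tilde T$ with the $u$-integral defining $V^*$ (this is permissible since $V$ is bounded and one can reduce to bounded continuous cutoffs of $\chi_{[0,\infty)}$ by strong approximation, then pass to the limit) yields, for $h \in L^2([0,\infty)_t, L^2(\RR_\lambda, d\mu_j))$,
\begin{equation*}
(\tilde T P \tilde T^{-1} h)(t, \lambda) \;=\; \chi_{[0,\infty)}(\lambda)\,\sqrt{2|\lambda|}\,e^{-t|\lambda|} \int_0^\infty \sqrt{2|\lambda|}\,e^{-u|\lambda|}\,h(u,\lambda)\,du.
\end{equation*}

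The factor $\chi_{[0,\infty)}(\lambda)$ annihilates the negative spectral part, so with respect to the splitting indicated in the lemma the operator has the claimed block form with zero on $L^2([0,\infty)_t, L^2((-\infty,0)_\lambda, d\mu_j))$. On the positive part I would invoke Fubini to rewrite $L^2([0,\infty)_t, L^2([0,\infty)_\lambda, d\mu_j))$ as the direct integral $\int_{[0,\infty)} L^2([0,\infty)_t)\,d\mu_j(\lambda)$, under which the displayed operator becomes the fiberwise operator
\begin{equation*}
h(\cdot,\lambda) \;\longmapsto\; f_\lambda \cdot \langle f_\lambda, h(\cdot,\lambda)\rangle_{L^2([0,\infty))}, \qquad f_\lambda(t) = \sqrt{2\lambda}\,e^{-\lambda t}.
\end{equation*}
A one-line computation $\int_0^\infty 2\lambda\,e^{-2\lambda t}\,dt = 1$ confirms $\|f_\lambda\|_{L^2([0,\infty))} = 1$, so this is exactly the rank-one orthogonal projection onto $\mathrm{span}\,f_\lambda$. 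The hypothesis that $0 \notin \spec(D)$ guarantees that each $\mu_j$ is supported away from $0$, so $f_\lambda$ is a genuine $L^2([0,\infty))$-vector depending continuously on $\lambda$ on the support of $\mu_j$, as asserted.

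The main (minor) obstacle is the justification of the interchange of $\tilde T$ with the $u$-integral in the presence of the non-continuous spectral projector $\chi_{[0,\infty)}(D)$; this is routine and is handled by approximating $\chi_{[0,\infty)}$ in the strong operator topology by continuous functions of $D$ and using boundedness of the remaining factors. All other steps are bookkeeping using the fact that $T$ diagonalizes the functional calculus of $D$.
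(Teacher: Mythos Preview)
Your proof is correct and follows essentially the same approach as the paper: both arguments observe that under the spectral transform $D$ and $|D|$ become multiplication by $\lambda$ and $|\lambda|$, so that formula \eqref{eq:def_of_P} becomes the fiberwise rank-one projection onto $f_\lambda$ for $\lambda>0$ and zero for $\lambda<0$. Your write-up is in fact more detailed than the paper's (which is a two-line computation), and the technical worry you flag about interchanging $\tilde T$ with the $u$-integral is harmless---since $P=V\chi_{[0,\infty)}(D)V^*$ is a composition of bounded operators, conjugation by the unitary $\tilde T$ passes through each factor without any limit argument needed.
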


\begin{proof}
  With respect to the decomposition of our Hilbert space, as $D$ acts as multiplication with $\lambda$ and $|D|$ acts as multiplication with $|\lambda|$ under the spectral transform $T$, the formula \eqref{eq:def_of_P} becomes the direct integral over the operators 
  $$g\mapsto \int_0^\infty \sqrt{2|\lambda|}e^{-t|\lambda|} \chi_{[0,\infty)}(\lambda) \sqrt{2|\lambda|}e^{-u|\lambda|} g(u)\,du= \begin{cases} f_\lambda \langle g,f_\lambda\rangle_{L^2([0,\infty))}; & \lambda>0 \\ 0 ; & \lambda<0 \end{cases}. $$
\end{proof}

We are now in the position to prove
\eqref{adapted-parametrix-left-bis}. We use the spectral transform
and we reduce to a computation on each single $L^2 ([0,\infty), L^2 (\RR,d\mu)
)$. 
First we remark that the operator induced by the left hand side of \eqref{adapted-parametrix-left-bis}
diagonalizes with respect to 
the decomposition
\begin{equation*}
  L^2 ([0,\infty), L^2 ([0,\infty)_\lambda,d\mu))\oplus L^2 ([0,\infty), L^2
  ((-\infty),0)_\lambda,d\mu),
\end{equation*}
even better, as before it becomes a direct integral over $\lambda\in \RR$ with measure $\mu_j(\lambda)$.
 Moreover,  the restriction  to
the second summand is equal to the identity, given that the restriction of the operator induced by 
$\frac{|D| + \pa_t}{D-\pa_t}$ on $L^2 (\RR, L^2 (\RR,d\mu))$ is equal to $(|\lambda| + \pa_t)(\lambda-\pa_t)^{-1}$
and it is therefore equal to $-\Id$ on $L^2 (\RR, L^2 (-\infty,0)_\lambda,d\mu)$, and the same holds with the same argument for $\frac{|D|-\pa_t}{D+\pa_t}$. The conclusion is that the restriction
of the two sides of \eqref{adapted-parametrix-left-bis} to the second summand of $L^2 ([0,\infty), L^2 ([0,\infty)_\lambda,d\mu))\oplus 
L^2 ([0,\infty), L^2 ((-\infty),0)_\lambda,d\mu))$ agree.

 Thus we are left with the task of showing that the
operator induced by the left hand side of \eqref{adapted-parametrix-left-bis} on $L^2 ([0,\infty), L^2 ([0,\infty)_\lambda,d\mu))$ in its direct integral decomposition for each $\lambda\in (0,\infty)$ has a 1-dimensional null space, generated by $f_\lambda(t)= \sqrt{2\lambda}e^{-t\lambda}$, $t\geq 0$
and it is equal to the identity on the orthogonal complement of this null space.
Using the direct integral decomposition, we treat $\lambda>0$ as a constant. Let us then check that the function $f_\lambda(t)$ is in the null space of the
operator induced by the left hand side of \eqref{adapted-parametrix-left-bis}. In order to check this
property we conjugate by Fourier transform. The inverse Fourier transform of $\sqrt{2\lambda}e^{-\lambda t}$, $t\geq 0$,
is up to a constant equal to $1/(\lambda-i\tau)$ (compare
\cite[Appendix]{Kammler}) which is holomorphic outside $\tau=-i\lambda$; in
particular it
is holomorphic on the upper half plane which means that it is left unchanged by the projection
onto the Hardy space (as it should, given that $\chi_{[0,\infty)}f_\lambda=f_\lambda$). Now we apply the operator
of multiplication by $(\lambda-i\tau)(\lambda+i\tau)^{-1}$, getting the function $1/(\lambda+i\tau)$. This is holomorphic on the lower half plane and therefore its boundary value is in the orthogonal complement of the Hardy space, so it is mapped to $0$ by projecting onto the Hardy space. The conclusion is that
 $f_\lambda$ is indeed in the null space of the left hand side  of 
 \eqref{adapted-parametrix-left-bis}. 
 
 Consider now the
 orthogonal complement of $f_\lambda$, i.e. 
 $$\{g\in L^2 ([0,\infty)) ; \int_0^\infty g(t) f_\lambda(t)\,dt=\int_\RR g(t)e^{i t (i\lambda)}\;dt=0\}.$$
 This is the space of functions $g$ such that $(\mathcal{F}^{-1}{g})(i\lambda )=0$. This means that 
 $$ \tau\mapsto \frac{(\lambda-i\tau)}{(\lambda+i\tau)} \left(\mathcal{F}^{-1}{g}\right)(\tau)$$
 is still holomorphic on the upper half plane and thus projection onto the Hardy space leaves
 it unchanged. Composing with the multiplication operator by $\frac{(\lambda+i\tau)}{(\lambda-i\tau)}$
 gives back $\mathcal{F}^{-1}{g}$. Thus the left hand side of 
 \eqref{adapted-parametrix-left-bis} acts as the identity on the orthogonal complement
 of $f_\lambda$ and the conclusion is that the left hand side of 
 \eqref{adapted-parametrix-left-bis} is precisely equal to $\Id-P$, thanks to Lemma
 \ref{lem:proj-lambda}.
 
 \subsection{Proof of  Propositions \ref{prop:v-covers} and
   \ref{prop:deformed-in-d}} 
\label{sec:proof_of_important_props}
We begin by proving  Proposition \ref{prop:v-covers}.\\
We wish to prove that the bounded  linear operator $V\colon \mathcal{L}^2
(M)\to  \mathcal{L}^2 (\RR_{\geq}\times M)$
 defined by $(Vs)(t):= \sqrt{2|D|}e^{-t|D|}(s)$
 is an isometry, that it satisfies the property that $\phi V - V (\phi\circ i)$ is
\emph{compact} for each  $\phi\in C_0 (\RR_{\geq}\times  M)$, and finally that
it is the norm-limit
    of bounded linear operators  $U$ satisfying the  propagation condition 
    appearing in Definition \ref{def:cover-D*-sense}.\\
    The fact that $V$ is an isometry is proved by direct computation, using
    the fact that the spectrum of
    $D$ does not contain zero, so that $e^{-t |D|}$ converges (exponentially)
    to zero for $t\to +\infty$.
Consider next the propagation condition which we recall here:
{\it there exists an $R>0$  such that $\phi U \psi=0$ if $d({\rm supp} \phi, i({\rm supp }\psi))>R$, with 
$\phi\in C_0 (\RR_{\geq}\times M)$ and $\psi\in C_0 (M)$}. We must find an approximating
sequence of bounded linear operators $U$ with this property. 
Consider the function $h_t (x):= \sqrt{|x|}e^{-t|x|}$. Our operator $V$ is
obtained from $h_t$ by
$$(V s) (t) = \frac{\sqrt{2}}{\sqrt{2\pi}} \int_{\RR} \widehat{h_t} (\xi) e^{i\xi D} (s) d\xi\,.$$
 We consider the function $f_t (x):=
 \begin{cases}
   0; & x<0\\ \sqrt{x}e^{-t x}; & x\geq 0
 \end{cases}
$. We write $h_t = f_t + g_t$, with $g_t (x):= f_t (-x)$.
Its Fourier-Laplace transform is
$$\widehat{f_t} (\zeta)= \frac{1}{\sqrt{2\pi}} \int_{0}^{+\infty} \sqrt{x} e^{-x (t+ i\zeta)}dx \,,$$
and we observe that this is a a holomorphic function in the region $\Im
(\zeta) < t$. 
For $s<t$ the integral can easily be evaluated, giving
 $$  \widehat{f_t} (is)= C \frac{1}{(t-s)^{\frac{3}{2}}}\,,\quad\text{with}\quad C= \frac{1}{\sqrt{2\pi}}\int_0^{+\infty}
 \sqrt{x}e^{-x} dx\,.$$
 Thus, by the identity principle for holomorphic functions, we deduce that 
  $$\widehat{f_t} (\zeta)= C (t+i\zeta)^{-\frac{3}{2}}\,,$$
 with the branch of the square root such that $t^{\frac{3}{2}}$ is
 positive for $t>0$. Going back to $h_t$ we have therefore proved that for
 $\xi\in\RR$  
 $$C^{-1}\widehat{h_t} (\xi) =  (t+i\xi)^{-\frac{3}{2}} + (t-i\xi)^{-\frac{3}{2}}.$$
 Let $R\in\RR$, $R>0$. We 
 define a bounded linear operator $U_R\colon \mathcal{L}^2 (M)\to
 \mathcal{L}^2 (\RR_{\geq}\times M)$
as follows:
$$(U_R s) (t) =
\begin{cases}
  \frac{\sqrt{2}}{\sqrt{2\pi}} \int_{-R^4}^{R^4} \widehat{h_t} (\xi) e^{i\xi
    D} dx &  \text{if} \quad t\leq R \\
   0 & \text{if} \quad t>R
\end{cases}.
$$ 
It is clear that $U_R$ satisfies the propagation condition. We have
$$\| (U_R - V)(s) \|^2\leq \sqrt{2}\| e^{-R |D|} \|^2 \| s \|^2 + \sqrt{2}\int_0^R \| \int_{|\xi|> R^4} \widehat{h_t} (\xi) e^{i\xi D} d\xi (s) \|^2 dt\,.
$$
For the second summand on the right hand side we can use the explicit description of 
$\widehat{h_t} (\xi)$, to get the estimates:
\begin{equation*}
\begin{aligned}
\int_0^R \| \int_{|\xi|> R^4} \widehat{h_t} (\xi) e^{i\xi D} (s)\|^2 dt & \leq \| s \|^2 \int_0^R ( \int_{|\xi|> R^4} | \widehat{h_t} (\xi)| d\xi )^2 dt\\& \leq  \| s \|^2 \int_0^R ( \int_{|\xi|> R^4} ( C |t+i\xi|^{-\frac{3}{2}} + C |t-i\xi|^{-\frac{3}{2}}) d\xi )^2 dt\\
& \leq 2C \| s \|^2 \int_0^R (\int_{|\xi|> R^4}  |\xi|^{-\frac{3}{2}} d\xi )^2\\&\leq C^\prime \frac{1}{R} \|s\|^2.
\end{aligned}
\end{equation*}
Summarizing, we have shown that there exists a positive $C>0$ such that
$$\| (U_R - V)(s) \|^2\leq C \left(\| e^{-R |D|} \|^2 + R^{-1}\right) 
\cdot \| s \|^2$$
proving that $U_R \to V$ in operator norm as $R\to +\infty$.

Next we need to show that 
$\phi V - V (\phi\circ i)$
\emph{is compact for each} $\phi\in C_0 (\RR_{\geq}\times  M)$. 
By the Kasparov Lemma \ref{lem:kasparov}  it suffices to prove that 
$\phi_1 V \psi_2$ is compact whenever $\phi_1\in 
C_0 ([0,\infty)\times M)$ $\psi_2\in C_0 (M)$ and the image of the support of $\psi_2$
through the inclusion map $i$  is disjoint from the support of $\phi_1$. 
Clearly, it suffices to prove that $\phi_1 V \psi_2$ is the  norm limit of compact operators.
We can obviously
consider $\phi_1=\alpha\otimes\psi_1$  with $\psi_1\in C_0 (M)$  and $\alpha\in C_0 [0,\infty)$.
There are then two cases:
\begin{enumerate}
\item \label{item:alpha_away_from_0} $\alpha(0)=0$, and we may as well assume
  that $\alpha$ is   supported away from $t=0$;
\item \label{item:alpha_at_0}
 $\alpha$ is not
  supported away from $t=0$ but $d({\rm supp}\psi_1,{\rm supp} \psi_2)\geq \delta >0$.
\end{enumerate}

Let us treat \ref{item:alpha_away_from_0} first. Take $\Lambda>>0$ and
consider $\chi_{[-\Lambda,\Lambda]}$. Then we can consider
$(\alpha\otimes\psi_1) U_{\Lambda}\psi_2 $ with  $(U_{\Lambda} s)(t):= \sqrt{2 |D|} e^{-t |D| } \chi_{[-\Lambda,\Lambda]}(D) (s)$.
The operator $(\alpha\otimes\psi_1) U_{\Lambda}\psi_2 $ is compact; 
indeed  $\chi_{[-\Lambda,\Lambda]}(D)$ is in $C^* (M)$, so that 
$\chi_{[-\Lambda,\Lambda]}(D)\psi$ is compact for each $\psi\in C_0 (M)$. We are 
considering $(\alpha\otimes \psi_1) V \chi_{[-\Lambda,\Lambda]}(D)\psi_2$; since 
$\chi_{[-\Lambda,\Lambda]}(D)\psi_2$ is compact and since $V$ is an isometry (and the composition of a compact operator with a bounded operator is again compact), we conclude that $(\alpha\otimes\psi_1) U_{\Lambda}\psi_2 $ 
is compact. It remains to show that the operator norm of $
(\alpha\otimes\psi_1)V\psi_2 -(\psi_1\otimes \alpha)U_{\Lambda}\psi_2$ is small.
We shall achieve this by proving that $\alpha V - \alpha U_\Lambda$ is small
in norm. Consider the spectral transform \eqref{spectral-tr}; under this
transformation, which is an isometry,
the  operators  $V$ and $U_\Lambda$ diagonalize as the direct sum of
bounded operators 
$V_j \colon L^2 (\RR_\lambda,d\mu_j)\to L^2 [0,\infty),L^2 (\RR_\lambda,d\mu_j))$ and similarly for $U_\Lambda$.
We have: $(V_j \sigma)(t,\lambda)= e^{-t|\lambda |} \sqrt{2|\lambda |} \sigma (\lambda)$ and 
similarly for $U_{\Lambda,j}$.
 Recall that we are under the assumption that the $L^2$-spectrum of $D$ does
 not contain $0$. Thus $0$ is never in the support of any of the measures
 $\mu_j$.  
We are also under the assumption 
that $\alpha $ is supported away from $0$.
Using this and some
elementary computation
one proves that $\| \alpha V_j \psi-  \alpha U_{\Lambda,j} \|$ is
exponentially decreasing in $\Lambda$. Thus $\alpha
U_{\Lambda,j}\xrightarrow{\Lambda\to \infty}
\alpha V_j$ in norm and therefore $(\alpha\otimes \psi_1) U_{\Lambda}\psi_2
 \xrightarrow{\Lambda\to\infty} (\alpha\otimes\psi_1)V\psi_2$ in norm, which
 is what we wanted to show.\\
Next we tackle \ref{item:alpha_at_0}. It suffices to work under the
assumption that $\alpha\equiv 1$, so that we are looking at 
$\psi_1 V \psi_2$ with $d({\rm supp}\psi_1,{\rm supp} \psi_2)\geq \delta >0$. 
Let $P\colon\mathcal{L}^2 (\RR_{\geq}\times M)\to \mathcal{L}^2 (\RR_{\geq}\times M)$
 be the operator of multiplication by the characteristic function
 of the
 $t$-interval $[0,\epsilon\delta)$. Then 
$\psi_1 V \psi_2=P\psi_1 V \psi_2 + (\Id-P)\psi_1 V \psi_2$. The second summand on the right hand side
 is compact by the same argument we have employed for
 \ref{item:alpha_away_from_0}. Thus it suffices to show that the norm 
of $P\psi_1 V \psi_2 $, as an operator from $\mathcal{L}^2 (M)\to \mathcal{L}^2 (\RR_{\geq}\times M)$,
is less than $\epsilon$. Write $\psi_1 V \psi_2$ as 
$$\psi_1 (\frac{1}{\sqrt{2\pi}}\int_{\RR} \hat{h}_t (\omega)  e^{i\omega D} d\omega)\psi_2\,$$
with $h_t (\lambda):=\sqrt{2|\lambda |} e^{-t |\lambda |}$.
From the assumption $d({\rm supp}\psi_1,{\rm supp} \psi_2)\geq \delta >0$ and
the propagation $\omega$
of $e^{i\omega D}$ this is equal to 
$$\psi_1 (\frac{1}{\sqrt{2\pi}} \int_{| \omega |\geq \delta} \hat{h}_t
(\omega)  e^{i\omega D} d\omega)\psi_2\, .$$
Fix $s\in 
\mathcal{L}^2 (M)$. Then 
$$\| P \psi_1 V \psi_2 (s)\|^2_{ \mathcal{L}^2
  (\RR_{\geq}\times M)} \le \int_0^{\epsilon\delta}(\frac{1}{\sqrt{2\pi}} \int_{| \omega |\geq \delta} | \hat{h}_t (\omega)| d\omega)^2
 \|\psi_1 \|^2_{\infty}  \|\psi_2 \|^2_{\infty} \|s \|^2_{\mathcal{L}^2}dt \,.$$
 Ir remains to show that $ 
 \int_0^{\epsilon\delta} \left(\frac{1}{\sqrt{2\pi}} \int_{| \omega |\geq \delta} | \hat{h}_t (\omega)| d\omega\right)^2 dt$
 is small.  However, from the explicit computation of $ \hat{h}_t$
 we see that this is less than
 $$C \int_0^{\epsilon\delta} \left(\int_{| \omega |\geq \delta}
   |\omega|^{-3/2} d\omega\right)^2 dt\quad\text{with}
 \quad C>0.$$
 Because this latter term is equal to $C \epsilon \delta \frac{16}{\delta} =16 C\epsilon$,
 the proof of  Proposition \ref{prop:v-covers}
 is complete.\\

 We now prove Proposition  \ref{prop:deformed-in-d}. We want to show that
 $\frac{|D| + \pa_t}{D-\pa_t}$ belongs to 
 $ D^* (\RR\times M)^\Gamma $. We must prove that this operator is a norm limit of operators that
 are pseudo-local and of finite propagation.


We write 
 $\frac{|D| + \pa_t}{D-\pa_t}$ as $\frac{|D|}{D-\pa_t} +
 \frac{\pa_t}{D-\pa_t}$ and deal with the two summands separately. 

We think of $(D-\partial_t)^{-1}\colon L^2\to H^1$ as bounded operator from
$L^2$ to the Sobolev space $H^1$, and we will compose it  with  $\partial_t\colon H^1\to L^2$
or $\abs{D}\colon H^1\to L^2$ as bounded operator from $H^1$ to $L^2$.

We will show that that for $\mathcal{D}:= 
 \begin{pmatrix}
   0 & D+\pa_t \\ D-\pa_t & 0
 \end{pmatrix}$ the operator $\mathcal{D}^{-1}\colon L^2\to H^1$ can be
approximated 
 by operators $F_\epsilon$ of finite propagation in the operator norm from
 $L^2$ to 
 the Sobolov space $H^1$, such that the commutator $[F_\epsilon,\phi]\colon
 L^2\to H^1$ is compact as an operator from $L^2$ to $H^1$ whenever $\phi$ is
 (multiplication by) a compactly supported continuous function. The
 same is then true for its corner $(D-\partial_t)^{-1}$.

Secondly, we will show that also 
 $\partial_t\colon H^1\to L^2$ and $|D|\colon H^1\to L^2$
 can be approximated as
 operators from $H^1$ to $L^2$ by finite propagation operators such that the
 commmutator of the approximating operators with compactly supported functions
 is compact as operator from $H^1$ to $L^2$.

Once having achieved this, the compositions will have the same required property.

Of the three operators to study, $\partial_t$ itself has propagation zero, and
the commutator $[\partial_t,\phi]$ is multiplication with the compactly
supported function $\partial_t\phi$,
which as operator from $H^1$ to $L^2$ is compact by the Rellich lemma.

Next we study $\mathcal{D}^{-1}$. As $\mathcal{D}$ is an invertible elliptic
operator of first
   order, we can and will choose on $H^1$ the norm such that
   $\mathcal{D}\colon H^1\to L^2$ is an isometry.  
   Let $f$ be an odd smooth bounded function
 equal to $1/x$ on the spectrum of the invertible self-adjoint operator
 $\mathcal{D}$.
 Notice that $f$, and thus its Fourier transform $\hat{f}$, lies in
 $L^2(\reals)$.  We consider the 
 function $x\mapsto xf(x)$ and we arrange that $g(x):=x f(x)-1$ is compactly
 supported. Thus its Fourier 
 transform $\hat{g}(\xi)$ will be in the Schwartz space
 $\mathcal{S}$.
 This means that $\pa_{\xi} \hat{f} - \frac{1}{\sqrt{2\pi}}\delta_0$
 is an element in $\mathcal{S}$; we deduce from this that $\hat{f}$ is
 bounded, smooth outside $0$, odd and of Schwartz
 class  as $| \xi |\to +\infty$.  Write $\hat{f}=g_\epsilon + v_\epsilon$ with
 $g_\epsilon $ odd and compactly  supported  
 and $v_\epsilon\in\mathcal{S}$ with the property that $| \pa_x
 {v}_\epsilon|_{L^1(\reals)}<\epsilon$. We achieve this by setting
 $v_\epsilon:=\phi_\epsilon \hat f$ with an even smooth cutoff
 function $\phi$ which vanishes in a sufficiently large neighborhood of $0$
 and which has uniformly small derivative (using that $\hat f$ is of
 Schwartz class at $\pm \infty$). Then $f=h_\epsilon + w_\epsilon$ 
 with $h_\epsilon$ having compactly supported Fourier transform $g_\epsilon$
 and $w_\epsilon\in \mathcal{S}$ with the
 property that $|x w_\epsilon(x)|_{\infty}<2\pi \epsilon$. We deduce that 
$ f(\mathcal{D}) = \mathcal{D}^{-1}$ and
\begin{equation*}
  \norm{f(\mathcal{D})-h_\epsilon(\mathcal{D})}_{L^2\to
    H^1}=\norm{w_\epsilon(\mathcal{D})}_{L^2\to H^1} 
=\norm{\mathcal{D}w_\epsilon(\mathcal{D})}_{L^2\to L^2} \le
  \abs{x w_\epsilon(x)}_{\infty} <2\pi\epsilon .
\end{equation*}
 Finally, $h_\epsilon (\mathcal{D})$ is of finite propagation by unit
propagation speed for the Dirac type operator $\mathcal{D}$.

Let now $\phi$ be a compactly supported smooth function on $\reals\times
M$. Bearing in mind
the finite propagation (say $R_\epsilon$) of $h_\epsilon(\mathcal{D})$, we 
choose a compactly 
supported function $\psi$ which takes the value $1$ on the $R_\epsilon$-neighborhood of
the support of $\phi$. Then
$h_\epsilon(\mathcal{D})\phi=\psi h_\epsilon(\mathcal{D})\phi$ 
and $\phi h_\epsilon(\mathcal{D})=\phi h_\epsilon(\mathcal{D})\psi$. Choose a
compact spin 
manifold $X$ with an open subset $U$ which is isometric (preserving the spin
structure) to an open
neighborhood $V$ of the support of $\psi$. To construct $X$, take e.g.~the
double of a compact $0$-codimensional submanifold with boundary of $\reals\times M$ containing
the support of $\psi$.
Then (again by unit propagation speed for Dirac operators) the operators $\phi 
h_\epsilon(\mathcal{D})\psi$ and $\psi h_\epsilon(\mathcal{D})\phi$ are
unitarily equivalent to 
the corresponding operators on the compact manifold $X$; we see in this way that 
the
commutator $[\phi,h_\epsilon(\mathcal{D})]$ is unitarily
equivalent to   $[\phi_X,h_\epsilon(D_X)]\colon H^1(X)\to L^2(X)$. Here $\phi_X$ is
the function $\phi$ transported to $X$ via the isometry, and $D_X$ is the
Dirac operator on $X$. Now it is a classical fact that $h_\epsilon(D_X)$ is a
pseudodifferential operator of order $1$. This follows e.g.~from \cite[Theorem
XII.1.3]{Taylor}. 
Strictly speaking, we write $h_\epsilon(D_X)= D_X
w_\epsilon(\sqrt{D_X^2})$ which is possible because we made sure that
$h_\epsilon(x)$ is an odd function. Our original $f(x)$ is smooth and equal to
$1/x$ for $x$ large, hence is a symbol of order $-1$. More precisely, it belongs to
$S^{-1}_{1,0}(\reals)$ in the sense of \cite[Lemma
XII.1.2]{Taylor}. Now $h_\epsilon(x)$ differs from $f(x)$ by the Fourier transform
of a Schwartz function, i.e.~by a Schwartz function, i.e.~also belongs to
$S^{-1}_{1,0}$. As $h_\epsilon$ is odd, $w_\epsilon$ (satisfying
$x w_\epsilon(\abs{x})=h_\epsilon(x)$) is smooth and 
belongs to $S^{-2}_{1,0}$. By Seeley's theorem on complex powers of elliptic
operators (or the special proof given in \cite[Section XII.1]{Taylor})
$\sqrt{D_X^2}$ is a positive pseudodifferential 
operator of order $1$ with scalar valued principal symbol on the compact manifold $X$.
Thus, all the hypotheses of \cite[Theorem XII.1.3]{Taylor} are fulfilled
and we conclude that $w_\epsilon(\sqrt{D_X^2})$ is a pseudodifferential operator of order
$-2$ and $h_\epsilon(D_X)=D_Xw_\epsilon(\sqrt{D_X^2})$ is a pseudodifferential
operator of order $-1$. By standard results of the pseudodifferential
calculus this implies that its
commutator with the smooth function $\phi_X$ is a pseudodifferential operator
of order $-2$ (this is a direct consequence of the short exact sequence defined
by the principal symbol and the formula for the principal symbol of a composition). 
So, up to unitary equivalence,
$[h_\epsilon(\mathcal{D}),\phi]$
can be written as composition of the bounded operators
$[h_\epsilon(D_X),\phi_X]\colon L^2\to H^2$ and $i\colon H^2\to H^1$ where the
latter operator is compact by the Rellich lemma on the compact manifold $X$,
Therefore $[h_\epsilon(\mathcal{D}),\phi]$ indeed is compact, as we had to show. Then
also the commutators with arbitrary continuous compactly supported
functions are compact because the smooth functions are dense in
sup-norm in $C_0$.

Finally, we treat $\abs{D}\colon H^1\to L^2$. Note that this should really be
written as
$\id_{L^2 (\reals)}\tensor \abs{D}$, which is
    not a function of the Dirac 
  operator on $\reals\times M$; this makes the analysis slightly more
  complicated.

We begin by analyzing the operator $\abs{D}$ acting on $M$. We keep
considering $\abs{D}$ as a bounded operator from $H^1$ to $L^2$.
As above, we can
write $\abs{D}=k_\epsilon(D)+u_\epsilon(D)$ where $u_\epsilon$ now is an even
Schwartz 
function such that $\norm{u_\epsilon(D)}<\epsilon$ (even when considered as
operator $L^2\to L^2$) and such that $k_\epsilon(D)$ has finite propagation, say
$R_\epsilon$. Then, as above, for a smooth compactly supported function $\phi_1$
on $M$, the commutator $[k_\epsilon(D),\phi_1]$ is unitarily equivalent to
$[k_\epsilon(D_X),\phi_{1,X}]$ for a compact manifolfd $X$. And, exactly with the
same reasoning as above, $k_\epsilon(x)$ is an even symbol of order $1$, so
that $k_\epsilon(D_X)$ is a pseudodifferential operator of order $1$.
Therefore $[k_\epsilon(D_X),\phi_{1,X}]$ is a pseudodifferential operator of
order $0$, defining a bounded operator $L^2\to L^2$. So the same is
true for $[k_\epsilon(D),\phi_1]$ (which is additionally supported on a
compact subset of $M$).

Now we return to $\reals\times M$. Note that the precise meaning of
$\id_{L^2(\reals)}\otimes D\colon H^1(\reals\times M)\to L^2(\reals\times M)$
is the composition of the (bounded) embedding $H^1(\reals\times M)\into
L^2(\reals)\otimes H^1(M)$ with the bounded operator $\id_{L^2(\reals)}\otimes
D\colon L^2(\reals)\otimes H^1(M)\to L^2(\reals\times M)$. We will use this
notation throughout. We write
$$\id_{L^2 (\reals)}\tensor \abs{D}= \id_{L^2 (\reals)}\tensor k_\epsilon(D) +
\id_{L^2 (\reals)}\tensor u_\epsilon(D) \colon H^1(\reals\times M)\to
L^2(\reals\times M).
$$
The first summand on the right hand side, a bounded operator $H^1\to L^2$, has finite propagation
whereas the second has  small norm as on operator from $H^1\to L^2$.
Thus we are left with the task of proving that $\id_{L^2 (\reals)}\tensor k_\epsilon(D)$
is pseudolocal as an operator from $H^1$ to $L^2$.
Given compactly
  supported smooth function $\phi_2$ on $\reals$ and $\phi_1$ on $M$, the
  commutator $[\id_{L^2 (\reals)}\tensor k_\epsilon(D) , \phi_2\phi_1]$  equals
  $\phi_2\tensor [k_\epsilon(D),\phi_1]$ which factors as the inclusion
  $H^1\to L^2$ 
  composed with the bounded operator $\phi_2\tensor [k_\epsilon (D),\phi_1]
  \colon L^2\to L^2$.
   As, in addition, this
  commutator is compactly supported, the Rellich lemma implies that this
  composition  is compact as an operator from $H^1$ to $L^2$.
  As smooth compactly supported functions of the form $\phi_1\tensor \phi_2$
  are dense in all 
  continuous functions of compact support, this finishes the proof of
  Proposition  \ref{prop:deformed-in-d}.

\begin{remark}
  We use the calculus of pseudodifferential operators here just for
  convenience. In \cite{PiazzaSchick_surgery} we generalize the assertions to
  perturbations
  of Dirac type operators which are not necessarily pseudodifferential,
  replacing the pseudodifferential arguments by  purely functional analytic
  ones. 
\end{remark}

\section{Mapping the positive scalar curvature sequence to analysis}

In this section, we finally prove Theorem \ref{theo:commute-r-pos}. One part
of this theorem is
the construction and commutativity of the following diagram
\eqref{eq:StolzToAna}. 
\begin{equation*}
  \begin{CD}
 @>>>   \Omega^{\spin}_{n+1} (B\Gamma) @>>>   R^{\spin}_{n+1}(B\Gamma) @>>> \Pos^{\spin}_n (B\Gamma) @>>> \Omega^{\spin}_n (B\Gamma) @>>>\\
  &&    @VV{\beta}V @VV{\Ind_\Gamma}V  @VV{\rho_\Gamma}V @VV{\beta}V\\
    @>>>   K_{n+1} ( B\Gamma) @>>> K_{n+1} ( C^*_r\Gamma) @>>>
    K_{n+1}(D^*_\Gamma)  @>>>  K_{n} ( B\Gamma) @>>>\\
    \end{CD}
\end{equation*}

First of all, we need to give a precise definition for the vertical homomorphisms.
Consider an element $[Y,f\colon Y\to B\Gamma,g_{\pa}]\in R^{{\rm
spin}}_{n+1} (B\Gamma)$. Let $g_Y$ be a Riemannian metric on
$Y$ extending $g_{\pa}$. We consider the Galois $\Gamma$-covering $W:= f^* E\Gamma$, 
endowed with the
lifted metric $g_W$. We consider $(W_\infty,g)$, the complete Riemannian
manifold with cylindrical ends associated to $W$.  
We wish to define $\Ind_\Gamma ([Y,f\colon Y\to B\Gamma,g_{\pa}])\in K_{n+1} (C^*_\Gamma)$;
to this end consider the relative coarse index class $\Ind^{{\rm rel}} (D_{W_\infty})\in
K_{n+1} (\relC{W_\infty}{W}^\Gamma )$ and its image  $\Ind (D_{W})\in
K_{n+1} (C^* (W)^\Gamma)$  through the canonical isomorphism
$K_{n+1} (\relC{W_\infty}{W}^\Gamma )\simeq K_{n+1} (C^* (W)^\Gamma)$.
We then consider the image of this class through the canonical
isomorphism $u_*\colon K_{n+1} (\relC{W_\infty}{W}^\Gamma )\simeq K_{n+1} (C^*_\Gamma)$ induced by the
 classifying map $u\colon W\to E\Gamma$. We have denoted this image by $\Ind_\Gamma (D_W)$,
 see \ref{gamma-index}. We set
 $$\Ind_\Gamma ([Y,f\colon Y\to B\Gamma,g_{\pa}]):= \Ind_\Gamma (D_W) \;\;\in\;\; K_{n+1} (C^*_\Gamma)\,.$$
 That this   index map $R^{\spin}_{n+1}(X) \xrightarrow{\Ind_\Gamma}  K_{n+1} ( C^*_r\Gamma)$
is well defined i.e.~that $\Ind_\Gamma(D_W)$ is bordism invariant, can be
  proved in many different ways. In future work we plan to give a treatment of
  bordism invariance in the spirit of coarse index theory. Alternatively,
  relying on 
  published previous work, it follows from the compatibility  between the coarse index class $\Ind^{{\rm rel}}  (D_{W_\infty})$
and the Mishchenko-Fomenko  
index class, either obtained on the associated manifold with cylindrical ends or 
\`a la Atiyah-Patodi-Singer, see \ref{prop:equality-aps}, then applying to the
Mishchenko-Fomenko index class \cite{Bunke} or \cite{LPPSC},
 where \cite{Bunke} employs a relative index theorem and
\cite{LPPSC} is based on a gluing formula for index classes.
  We remark that it would also be possible to state
and prove a relative index theorem similar to \cite{Bunke} but in coarse geometry and then apply
Bunke's argument directly to the coarse index class  $\Ind_\Gamma (D_W)$. \\
Consider now an element $[Z,f\colon Z\to B\Gamma,g_Z]\in \Pos^{\spin}_n (B\Gamma) $;
we consider the $\Gamma$-covering $M:=f^* B\Gamma$ and we endow it with 
the lifted metric $g$. Then, by definition,
 $$\rho_{\Gamma} [Z,f\colon Z\to B\Gamma,g_Z] = \rho_\Gamma (g)\;\;\in\;\; K_{n+1} (D^*_\Gamma)\,.$$
The fact that $\rho_\Gamma$ is well defined follows from Corollary \ref{corol:part_index_special}.
Finally, let us recall the definition of the  map $\beta
\colon \Omega^{\spin}_{n+1} (B\Gamma)\to K_{n+1} (B\Gamma)$, as given by Higson and Roe
in \cite{higson-roeI,higson-roeII,higson-roeIII}.
Consider an element  $[Z,f\colon Z\to B\Gamma]\in \Omega^{\spin}_{n+1} (B\Gamma)$  and let
$M:= f^* E\Gamma$, endowed with any $\Gamma$-invariant Riemannian  metric $g$. We consider the class 
$[D_M]\in K_{n} (D^* (M)^\Gamma/C^* (M)^\Gamma)\simeq K_{n+1} (M/\Gamma)\equiv K_{n+1} (Z)$
and we push it forward through $f_*$ to $K_{n+1} (B\Gamma)$:
$$\beta [Z,f\colon Z\to B\Gamma] := f_* [D_M]\;\in\; K_{n+1} (B\Gamma).$$

We must now tackle the commutativity of the diagram. We consider the three distinct squares
of the diagram from left to right. The commutativity of the first square, which  is implicitly discussed
in the work of Higson-Roe, follows from the definition of the $C^*_r \Gamma$-index class, as
given in Subsection \ref{subsect:ind-and-rho} and Subsection
\ref{sec:index_class_boundary}.
The commutativity of the second square is a direct consequence 
of our   APS index theorem, see Corollary 
\ref{corollary-main} and more precisely  formula 
\eqref{main-index-eq}.
The commutativity of the third square is again a direct consequence of 
the definitions.

The remaining part of Theorem \ref{theo:commute-r-pos} deals with a compact
space $X$ with fundamental group $\Gamma$ and universal covering
$\tilde 
X$. Here  one uses the canonical isomorphisms
$R^{\spin}_*(X)=R^{\spin}_*(\Gamma)$ (the 
structure groups depend only on the fundamental group),
$K_*(X)=K_{*+1}(D^*(\tilde X)^\Gamma/C^*(\tilde X)^\Gamma)$ and
$K_*(C^*(\tilde X))=K_*(C^*_r\Gamma)$. Then the proof of
\eqref{eq:StolzToAnaX} is exactly parallel to the proof of
\eqref{eq:StolzToAna} once we use the results explained in Remark \ref{remark:bordism-invariance}.
This finishes the proof of Theorem \ref{theo:commute-r-pos}.

\bibliography{surgery}
\bibliographystyle{plain}

\affiliationone{
   Paolo Piazza\\
   Sapienza Universit\`a di Roma\\ 
  Rome\\
   Italy 
   \email{piazza@mat.uniroma1.it}
}
\affiliationtwo{
   Thomas Schick\\
   Georg-August-Universit\"at G{\"o}ttingen\\
   Mathematisches Institut\\
  Bunsenstr.3, 37073 G\"ottingen\\
  Germany
   \email{schick@uni-math.gwdg.de}
}
\end{document}